\documentclass[12pt]{amsart}

\usepackage[english]{babel}
\usepackage[toc,page]{appendix}
\parindent=0.pt
\usepackage{amsmath}
\usepackage{amsthm}
\usepackage{amssymb}
\usepackage{mathrsfs}
\usepackage{enumerate}
\usepackage{mathscinet}
\usepackage[notcite, final, notref]{showkeys}
\usepackage{dsfont}
\usepackage{bookmark}
\usepackage{hhline}
\usepackage[dvips]{color}
\usepackage{tikz}
\usetikzlibrary{decorations.markings}
\tikzset{>=latex}

\topmargin=-10mm \oddsidemargin=0mm \evensidemargin=0mm
\textheight=230mm \textwidth=160mm

\newcommand{\suchthat}{\;\ifnum\currentgrouptype=16 \middle\fi|\;}

\newtheorem{theorem}{Theorem}[section]
\newtheorem{problem}[theorem]{Problem}
\newtheorem{lemma}[theorem]{Lemma}
\newtheorem{proposition}[theorem]{Proposition}
\newtheorem{corollary}[theorem]{Corollary}
\newtheorem{definition}[theorem]{Definition}

\theoremstyle{definition}
\newtheorem{remark}[theorem]{Remark}

\allowdisplaybreaks

\usepackage{tikz}
\usepackage[siunitx]{circuitikz}

\usepackage{xcolor}

\title[Positivity and some of its implications in the Grassmann algebra]{Positivity, rational Schur functions, Blaschke factors, and other related results in the Grassmann algebra}

\author[D. Alpay]{Daniel Alpay}
\address{(DA) 
Faculty of Mathematics, Physics, and Computation\\
Schmid College of Science and Technology\\
Chapman University\\
One University Drive\\
Orange, California 92866\\
USA}
\email{alpay@chapman.edu}

\author[I. L. Paiva]{Ismael L. Paiva }
\address{(ILP)
Schmid College of Science and Technology\\
Chapman University\\
One University Drive\\
Orange, California 92866\\
USA}
\email{depaiva@chapman.edu}

\author[D. C. Struppa]{Daniele C. Struppa}
\address{(DCS) 
Faculty of Mathematics, Physics, and Computation\\
Schmid College of Science and Technology\\
Chapman University\\
One University Drive\\
Orange, California 92866\\
USA}
\email{struppa@chapman.edu}




\begin{document}

\tikzset{->-/.style={decoration={
  markings,
  mark=at position #1 with {\arrow[scale=2]{>}}},postaction={decorate}}}

\begin{abstract}
We begin a study of Schur analysis in the setting of the Grassmann algebra, when the latter is completed with respect to the $1$-norm. We focus on the rational case.
We start with a theorem on 
invertibility in the completed algebra, and define a notion of positivity in this setting. We present a series of applications pertaining to Schur analysis, including 
a counterpart of the Schur algorithm, extension of matrices and rational functions. Other topics considered include Wiener algebra, reproducing kernels Banach
modules, and Blaschke factors. 
\end{abstract}

\maketitle

\noindent AMS Classification: 30G35, 15A75, 47S10

\noindent {\em Key words}: Grassmann algebra, Schur analysis, Wiener algebra, Toeplitz matrices.

\date{today}
\setcounter{tocdepth}{1}
\tableofcontents


\section{Introduction}
\setcounter{equation}{0}
\label{intro}
The purpose of this work is to begin a study of Schur analysis and related topics in the setting of the Grassmann algebra. To put the problem into perspective and 
to set the framework, we first briefly review the classical setting. Schur analysis is part of function theory in the open unit disk $\mathbb{D}$ or in a half-plane. 
It is a rich and vastly developed field with numerous applications, which include -- but are not limited to -- signal processing \cite{MR1677962}, fast algorithms \cite{C-schur} and linear systems \cite{MR3362408}. It originated with the work of Schur \cite{schur,schur2}, although this area can be even traced back to Stieltjes \cite{MR1623484}. We suggest the reference \cite{hspnw} for a collection of related original papers on the topic. The Hardy space and Blaschke factors are important players in this domain (see e.g. \cite{MR1102893, sarason94}), as well as the Wiener algebra and rational functions.\smallskip

Recall that the Hardy space of the unit disk $\mathbf H_2$ is the Hilbert space of power series $f(\lambda)=\sum_{n=0}^\infty a_n\lambda^n$, $\lambda\in\mathbb{C}$, such that $\|f\|_{\mathbf H_2}^2\equiv\sum_{n=0}^\infty |a_n|^2<\infty$. From the signal processing point of view, it can be interpreted as the space of $z$-transforms of finite energy discrete signals. Such an interpretation motivates various interpolation problems in the Hardy space and related spaces. The Nevanlinna-Pick and the Carath\'{e}odory-Fej\'{e}r problems are two examples of it. The former consists on, given $\lambda_1,\ldots, \lambda_N$ in $\mathbb D$ and complex numbers $\omega_1,\ldots, \omega_N$, describing the set of all Hardy functions $f$ such that $f(\lambda_j)=\omega_j$ for $j=1,\ldots N$. The latter, on the other hand, refers to the problem of fixing the first $N$ derivatives of a function at a given point. In both cases, additional metric constraints are made on $f$, such as taking contractive values in the open unit disk. Such functions are called Schur functions, and are transfer functions of dissipative systems. See, for instance, \cite{MR2002b:47144,Dym_CBMS,MR84e:93003,Fuhrmann,helton,helton78}.\smallskip

In \cite{schur}, using Schwarz' lemma, Schur associated to a Schur function $s(\lambda)$ a sequence, finite or infinite, of Schur functions $s_0,s_1\ldots$ via the recursion
\begin{equation}
\label{recurschur}
\begin{split}
s_0(\lambda)&=s(\lambda)\\
s_{n+1}(\lambda)&=\frac{s_n(\lambda)-s_n(0)}{\lambda(1-s_n(\lambda)\overline{s_n(0)})},\quad n=0,1,\ldots\\
\end{split}
\end{equation}
Such a recursion ends at a rank $n$ if $|s_n(0)|=1$, and this happens if and only if $s$ is a finite Blaschke product. The numbers $\rho_n=s_n(0)$, $n=0,1,\ldots$ are called the Schur coefficients of $s$; they lead to a continued fraction expansion of $s$, and prove more appropriate than the Taylor series of $s$ to solve various approximation problems -- see, e.g., \cite{C-schur,Dym_CBMS}.\smallskip

For $\lambda=0$, the Carath\'{e}odory-Fej\'{e}r problem becomes trivial for Hardy functions -- since the coefficients $a_n$ of $f$ are known. However, this is a problem of central importance in the class of functions with positive real part in the open unit disk, which is related to the theory of extension of Toeplitz matrices and has applications on the prediction theory of second-order stationary processes. \smallskip

Boundary values of Hardy functions can be quite a challenging problem. It is, then, sometimes desirable to consider functions in the Wiener algebra $\mathcal W_+$. To define the latter, we first introduce the Wiener algebra $\mathcal{W}^p$:
\[
\mathcal{W}^p = \left\{f(e^{it}) = \sum_{n\in\mathbb{Z}}e^{int}f_n \suchthat t\in\mathbb{R}, f_n\in\mathbb{C}^{p\times p}, \sum_{n\in\mathbb{Z}}\Vert f_n\Vert<\infty\right\},
\]
where $\Vert\cdot\Vert$ is a complex matrix norm, endowed with the usual multiplication of functions. Some facts about $\mathcal{W}^p$ are relevant for our present discussion. For instance, a function $f\in\mathcal{W}^p$ is said to be strictly positive if 
$f(e^{it})>0$ for every real $t$. Moreover, the Wiener-L\'{e}vy theorem assures that $f$ has an inverse in $\mathcal{W}^p$ if and only if $f(e^{it})\neq 0$ for every 
real $t$, i.e., invertibility in the algebra is equivalent to pointwise 
invertibility.\smallskip

Two important subalgebras of $\mathcal{W}^p$ are $\mathcal{W}^p_+$, composed by functions $f$ of the type $f(e^{it})=\sum_{n=0}^\infty e^{int}f_n$, and $\mathcal{W}^p_-$, which contains the functions $f(e^{it})=\sum_{n=-\infty}^0 e^{int}f_n$. There are elements in $\mathcal{W}^p_+$ (resp. $\mathcal{W}^p_-$) that have inverse in $\mathcal{W}^p_+$ (resp. $\mathcal{W}^p_-$). They are denoted by $f_+$ (resp. $f_-$). Furthermore, some functions in $\mathcal{W}^p$ can be factorized as $f=f_+f_-$. A theorem states that $f\in\mathcal{W}^p$ is strictly positive if and only if it has such a factorization and it is characterized by $f_-=f_+^*$, where $f_+^*$ denotes the adjoint of $f_+$. \smallskip

Extending these notions to more general settings -- to name a few, several complex variables, upper triangular operators, quaternionic analysis, bi-complex numbers -- 
has been a source of new problems and methods. See, e.g., \cite{MR93b:47027,MR3309382,a-volok,dede,MR99g:93001}. Each of those settings has a natural interpretation in terms of signal theory and linear systems. For example, time-varying systems correspond to upper triangular operators, and systems indexed by several indices correspond to function theory in the unit ball of $\mathbb{C}^N$ or the unit polydisk. Moreover, in all those settings there exists a natural counterpart of the Hardy space. In the case of the upper triangular operators, it is the space of Hilbert Schmidt upper triangular operators. As to the case of function theory in the unit ball, on the other hand, it is the Drury-Arveson space \cite{MR80c:47010}, which is different from the classical Hardy space when the dimension $N$ is greater than $1$. In the case of the work we present here, the counterpart of the Hardy space is a Wiener-type algebra.\smallskip

The central aspect of our approach is the replacement of the complex numbers by the Grassmann algebra $\Lambda$. The latter plays a fundamental role in supersymmetry and, also, in quantum field theory, where it allows the construction of path integrals for fermions \cite{das1993field}. We recall that $\Lambda$ is the unital algebra on the complex numbers generated by $1$ and a countable set of elements $i_n$ not belonging to $\mathbb{C}$, linearly independent over $\mathbb{C}$, and satisfying
\begin{equation}
i_ni_m+i_mi_n=0, \label{anticommutative}
\end{equation}
where $n,m=1,2,\ldots$, and in particular
\begin{equation}
i_n^2 = 0.
\end{equation}
An element of $\Lambda$ is often referred to as a supernumber. If we denote by $\Lambda_N$ the case with $N$ generators $i_n$, we write $\Lambda=\cup_{N\in\mathbb{N}}\Lambda_N$. This differs from the usual way $\Lambda$ is treated in the literature, since if $z\in\Lambda$, there exists $n(z)$ such that $z\in\Lambda_{n(z)}$. Here, we follow the approach introduced in \cite{rogers1980global,zbMATH00861741}to study problems with an effective infinite number of generators, we look at a closure of $\Lambda$ with respect to a norm that makes the closure a Banach 
algebra -- see expression \eqref{1norm}.\smallskip

The paper consists of eleven sections besides this introduction. We now 
describe their content with an emphasis on the main results. In Section \ref{sec2}, we consider the closure $\overline{\Lambda}^{(1)}$ of the Grassmann algebra under the $1$-norm \eqref{1norm}. In particular, we define positive numbers and, using Gelfand theory, we characterize invertibility in this non-commutative Banach algebra. Building on these, we study in Section \ref{sec3} matrices with entries in $\overline{\Lambda}^{(1)}$. We introduce, in particular, the notion of positive matrix and prove a factorization theorem for them. As an example of application of these results, we study in Section \ref{sec-matrix-ext} the one step extension problem for Toeplitz matrices with entries in $\overline{\Lambda}^{(1)}$. In Section \ref{ratio}, we define and characterize rational functions, and  introduce in particular the notion of a realization in this setting. To have such notions, we need a special product for power series with coefficients in $\overline{\Lambda}^{(1)}$, namely the convolution on the coefficients (or the Cauchy product), which is denoted here by $\star$. Built these tools, we are ready to introduce and characterize rational Schur functions in Section \ref{sec-schur}. Moreover, using the Cauchy product, we introduce and study in Section \ref{sec-wiener} the Grassmannian counterpart of the Wiener algebra. In Section \ref{sec-rkpm}, we study finite dimensional reproducing kernel modules over $\overline{\Lambda}^{(1)}$ and characterize such modules associated to $J$-unitary rational functions in the present setting -- see Definition \ref{rationalpotapov} for the latter. We also consider an interpolation problem in the setting of the Wiener algebra. Schur analysis itself is studied in the next three sections: In Section \ref{sec-interpolation}, we study the Nevanlinna-Pick interpolation and, in Section \ref{schuralgo}, we introduce the Schur algorithm. Blaschke and Brune factors, which in the classical case may be considered the building blocks of Schur analysis are presented in Section \ref{sec-blaschke}. Finally, we lay out in Section \ref{sec-final} future directions of research.

\section{Some aspects of $\Lambda$}
\setcounter{equation}{0}
\label{sec2}

In this section, we introduce the idea of positivity in the Grassmann algebra $\Lambda$ and also some results on its closure with respect to the $1$-norm -- see Definition \ref{1norm}. Before that, we review some basic definitions on this setting.

\begin{definition}
We denote by $\mathfrak I$ the set of $t$-uples $({a_1},\ldots, {a_t})\in\mathbb N^t$, where $t$ runs through $\mathbb N$ and $a_1<{a_2}<\cdots<{a_t}$. For $\alpha=({a_1},\ldots, {a_t})\in\mathfrak I$ we set $i_\alpha=i_{a_1}\cdots i_{a_t}$ and write an element $z\in\Lambda$ as a finite sum
\begin{equation}
\label{Grassmann-decomposition}
z=z_0+\sum_{\alpha\in\mathfrak{I}}z_\alpha i_\alpha,
\end{equation}
where the coefficients $z_0$ and $z_{a_1,\ldots,a_k}$ are complex numbers.
\end{definition}

The term that does not contain any Grassmann generator, $z_0$, is called the {\it body} of the number and is often denoted by $z_B$, while $z_S=z-z_B$ is said to be the {\it soul} of the number \cite{MR1172996}. One can also give a meaning to the sum \eqref{Grassmann-decomposition} when it has an infinite number of terms, as we do later in this section.\smallskip

We also set $i_0\equiv1$ and ``extend'' the set $\mathfrak{I}$ by defining $\mathfrak{I}_0\equiv\{0\}\cup\mathfrak{I}$. Hence, a supernumber can be simply written as
\[
z = \sum_{\alpha\in\mathfrak{I}_0} z_\alpha i_\alpha.
\]

If $z=\sum_{\alpha\in\mathfrak{I}_0}z_\alpha i_\alpha$ and $w=\sum_{\beta\in\mathfrak{I}_0}w_\beta i_\beta$, their product makes sense since the sums are finite, and can be written as
\[
zw = \sum_{\alpha,\beta\in\mathfrak{I}_0}z_\alpha w_\beta i_\alpha i_\beta.
\]
Let $\alpha,\beta\in\mathfrak{I}$ and note that $i_\alpha i_\beta=0$ when $i_\alpha$ and $i_\beta$ have a common factor $i_u$, with $u\in\mathbb N$. Moreover, when $i_\alpha i_\beta$ does not vanish, it might still not be an element of the set $\{i_\alpha : \alpha\in\mathfrak{I}\}$, since permutations might be necessary to obtain such type of element. However, because permutations only introduce powers of negative one, there exists a uniquely defined $\gamma\in\mathfrak{I}$ such that
\[
i_\alpha i_\beta = (-1)^{\sigma(\alpha,\beta)} i_\gamma,
\]
where $\sigma(\alpha,\beta)$ is the number of permutations necessary to ``build'' $\gamma$ from $\alpha$ and $\beta$. If such a relation holds, we write
\begin{equation}
\alpha\vee\beta=\gamma.
\label{vee}
\end{equation}
So $i_\alpha i_\beta = (-1)^{\sigma(\alpha,\beta)} i_{\alpha\vee\beta}$. \smallskip

\begin{proposition}
A supernumber in $\Lambda$ is invertible if and only if its body is different from zero.
\label{invertibility-lambda}
\end{proposition}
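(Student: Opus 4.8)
The plan is to route both implications through the \emph{body map} $b\colon\Lambda\to\mathbb C$, $b(z)=z_B$. First I would check that $b$ is a unital algebra homomorphism. Linearity and $b(1)=1$ are immediate from \eqref{Grassmann-decomposition}. For multiplicativity, write $z=z_B+z_S$ and $w=w_B+w_S$; then
\[
zw=z_Bw_B+\bigl(z_Bw_S+z_Sw_B+z_Sw_S\bigr),
\]
and every term inside the parentheses is a finite $\mathbb C$-linear combination of products $i_\alpha$ with $\alpha\in\mathfrak I$ (each occurrence of $z_S$ or $w_S$ contributes at least one generator), hence has vanishing body. Thus $(zw)_B=z_Bw_B$. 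The forward implication follows at once: if $z$ has an inverse $z^{-1}$, then $z_B\,(z^{-1})_B=b(zz^{-1})=1$, so $z_B\neq 0$.

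For the converse, assume $z_B\neq 0$. As a nonzero complex number, $z_B$ is central in $\Lambda$ and invertible, so we may factor $z=z_B(1+n)$ with $n=z_B^{-1}z_S$, which is again a soul (its body is $0$). It then suffices to invert $1+n$.

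The crux is that every soul in $\Lambda$ is nilpotent, and here the convention $\Lambda=\cup_{N\in\mathbb N}\Lambda_N$ does the work: pick $N$ with $n\in\Lambda_N$, so that $n$ is a $\mathbb C$-linear combination of monomials $i_\alpha$ with $\emptyset\neq\alpha\subseteq\{1,\dots,N\}$. Expanding $n^{N+1}$, each resulting monomial is, up to sign, a product of at least $N+1$ generators taken from $i_1,\dots,i_N$, so by pigeonhole some generator repeats and the monomial vanishes by \eqref{anticommutative}; hence $n^{N+1}=0$. Therefore $\sum_{j=0}^{N}(-1)^jn^j$ is a well-defined element of $\Lambda$, and since it commutes with $1+n$ and the verification uses only $n^{N+1}=0$, the telescoping identity
\[
(1+n)\sum_{j=0}^{N}(-1)^jn^j=\sum_{j=0}^{N}(-1)^jn^j\,(1+n)=1
\]
holds. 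Consequently $z^{-1}=\Bigl(\sum_{j=0}^{N}(-1)^jn^j\Bigr)z_B^{-1}$.

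I do not anticipate a real obstacle: the only steps needing care are checking that $b$ respects the noncommutative product and establishing nilpotency of souls, the latter resting precisely on the fact that any given supernumber involves only finitely many generators.
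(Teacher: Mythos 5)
Your proof is correct and follows essentially the same route as the paper: the forward direction extracts $z_Bw_B=1$ from $zz^{-1}=1$, and the converse uses nilpotency of the soul (from the finitely-many-generators convention) together with the finite geometric series $z^{-1}=z_B^{-1}\sum_k(-z_S/z_B)^k$. Your added checks (multiplicativity of the body map, the pigeonhole argument for $n^{N+1}=0$) just make explicit what the paper leaves as "easy to check."
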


\begin{proof}
Let $z=z_B+z_S\in\Lambda$. As a consequence, there exists $n(z)$ such that $z\in\Lambda_{n(z)}$. In particular, it is easy to check that $z_S^{n(z)+1}=0$. Therefore, if $z_B\neq0$ the expression
\[
z^{-1} = z_B^{-1} \sum_{k=0}^{n(z)} \left(-\frac{z_S}{z_B}\right)^k
\]
gives the inverse of $z$.

Conversely, assume $z$ is invertible and let its inverse be $w=w_B+w_S\in\Lambda$. Then,
\[
zw=1\Rightarrow z_Bw_B=1\Rightarrow z_B\neq 0.
\]
\end{proof}

The next results concern the square root, denoted by $\sqrt{z}$ or by $z^{1/2}$, of a supernumber $z\in\Lambda$. By square root of a supernumber, we mean the analytic extension of the usual square root of a complex number.

\begin{proposition}
Every invertible supernumber in $\Lambda$ has a square root.
\label{prop-sqrt-lambda}
\end{proposition}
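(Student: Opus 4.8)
The plan is to reduce to a supernumber of the form $1+w$ with $w$ nilpotent and then to use the truncated binomial series for $\sqrt{1+x}$. Let $z=z_B+z_S$ be invertible. By Proposition \ref{invertibility-lambda}, $z_B\neq 0$, and, exactly as in its proof, there is an integer $n(z)$ with $z_S^{n(z)+1}=0$. Fix a complex square root $c$ of $z_B$ (possible since $z_B\neq 0$), and set $w=z_B^{-1}z_S$, so that $z=c^2(1+w)$ with $w^{n(z)+1}=0$.

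Next I would define
\[
\sqrt{1+w}:=\sum_{k=0}^{n(z)}\binom{1/2}{k}w^k\in\Lambda,
\]
which is a legitimate element of $\Lambda$ since the sum is finite; this is precisely the analytic (Taylor) extension of $x\mapsto\sqrt{1+x}$ evaluated at the nilpotent $w$, the series terminating because $w$ is nilpotent. The candidate square root is then $\sqrt{z}:=c\cdot\sqrt{1+w}$, and one may also take $-c\cdot\sqrt{1+w}$, giving the two expected roots.

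The verification is the heart of the argument. Since $c\in\mathbb{C}$ is central in $\Lambda$ and all powers of the single element $w$ commute with one another, $(\sqrt{z})^2=c^2\left(\sum_{k=0}^{n(z)}\binom{1/2}{k}w^k\right)^2$. Expanding the square and collecting by total degree in $w$: every monomial $w^j$ with $j>n(z)$ vanishes, while for $0\le j\le n(z)$ the coefficient of $w^j$ equals $\sum_{k+\ell=j}\binom{1/2}{k}\binom{1/2}{\ell}$, the $j$-th coefficient of the formal power series $\left(\sum_{k}\binom{1/2}{k}x^k\right)^2=1+x$, hence $1$ for $j=0$, $1$ for $j=1$, and $0$ for $2\le j\le n(z)$. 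Therefore $\left(\sum_{k=0}^{n(z)}\binom{1/2}{k}w^k\right)^2=1+w$, and $(\sqrt{z})^2=c^2(1+w)=z_B+z_S=z$.

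I do not expect a serious obstacle: the only points needing care are (i) that the truncation is harmless, because squaring the truncated series produces cross terms of degree $>n(z)$ that all vanish, and (ii) that non-commutativity plays no role here, since $c$ is central and $w$ commutes with its own powers. One could alternatively package this as an instance of a holomorphic functional calculus $f\mapsto f(z)=\sum_{k\ge 0}\frac{f^{(k)}(z_B)}{k!}z_S^k$ for $f$ holomorphic near $z_B$, applied to a branch of the square root, and invoke $(\sqrt{\,\cdot\,})^2=\mathrm{id}$; but the direct power-series computation above is cleaner and self-contained, and it makes transparent the sense in which $\sqrt{z}$ is the announced analytic extension of the complex square root.
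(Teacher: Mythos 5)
Your proof is correct and follows essentially the same route as the paper: use Proposition \ref{invertibility-lambda} to get $z_B\neq 0$, factor $z=z_B\bigl(1+z_B^{-1}z_S\bigr)$, and expand the square root of $1+$ a nilpotent element as a terminating power series (the paper writes the series with Catalan-style coefficients, you with $\binom{1/2}{k}$, which is the same expansion). The only difference is that you explicitly verify that the truncated series squares to $1+w$, a check the paper leaves implicit.
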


\begin{proof}
Let $z=z_B+z_S\in\Lambda$ be an invertible supernumber. Because of Proposition \ref{invertibility-lambda}, we can assume $z_B\neq0$. Therefore,
\[
z=z_B\left(1+\frac{z_S}{z_B}\right)\Rightarrow \sqrt{z}=\sqrt{z_B}\sqrt{1+\frac{z_S}{z_B}}=\sqrt{z_B}\left[1-\sum_{k=0}^\infty\frac{2}{k+1}\binom{2k}{k}\left(-\frac{z_S}{4z_B}\right)^{k+1}\right].
\]
Observe that the last sum converges because it is finite. In fact, because $z\in\Lambda$, there exists $n(z)$ such that $z\in\Lambda_{n(z)}$, which implies that $z_S^{n(z)+1}=0$.
\end{proof}

It is common to define the conjugated $\dag$ of a supernumber $z$ as
\[
z^{\dag} \equiv \overline{z_0}+\sum_{\alpha\in\mathfrak I} (-1)^{\pi(\alpha)} \overline{z_\alpha} i_\alpha,
\]
where $\pi(\alpha) = |\alpha|(|\alpha|-1)/2$ with $|\alpha|$ being the number of elements of $\alpha$. Observe that it can be characterized as the complex conjugation of the coefficients $z_\alpha$, $i_n^{\dag} = i_n$, and $(zw)^{\dag} = w^{\dag} z^{\dag}$. In \cite{2018arXiv180611058A}, we showed how this conjugation can be understood as the composition of other conjugations that emerge, in some sense, more naturally from the symmetry of $\Lambda$.\smallskip

Following \cite{MR1172996}, if $z\in\Lambda$ is such that $z^\dag=z$, it is said to be a {\it real supernumber}, or superreal. On the other hand, if $z^\dag=-z$, it is said to be an {\it imaginary supernumber}. Note that a real supernumber generally does not belong to $\mathbb{R}$. For instance, $i_1+i_2$ is a real supernumber.\smallskip

Now, we add to those definitions the idea of a {\it non-negative} and a {\it non-positive} supernumber.

\begin{definition}
\label{posi_num}
Let $z\in\Lambda$. If there exists $w\in\Lambda$, such that $z=ww^\dag$ (resp. $z=-ww^\dag$), $z$ is said to be a non-negative (resp. non-positive) supernumber, and we write $z\succeq0$ (resp. $z=-ww^\dag$).
\end{definition}

\begin{proposition}
A non-negative or non-positive supernumber is, in particular, a real supernumber.
\label{positive-real}
\end{proposition}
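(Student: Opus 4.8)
The plan is to show directly that if $z = ww^\dagger$ for some $w \in \Lambda$, then $z^\dagger = z$ (the non-positive case following immediately by negation). The key observation is the anti-multiplicativity of the $\dagger$ conjugation, namely $(ab)^\dagger = b^\dagger a^\dagger$, together with the involutivity $(w^\dagger)^\dagger = w$. The latter should be verified first: applying the definition twice to $z = \sum_\alpha z_\alpha i_\alpha$ reproduces each coefficient via $\overline{\overline{z_\alpha}} = z_\alpha$ and each sign via $(-1)^{\pi(\alpha)}(-1)^{\pi(\alpha)} = 1$, so $\dagger$ is indeed an involution. I would state this as a one-line remark or fold it into the proof.

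With those two facts in hand the proof is essentially immediate: write $z = ww^\dagger$ and compute
\[
z^\dagger = (ww^\dagger)^\dagger = (w^\dagger)^\dagger\, w^\dagger = w w^\dagger = z,
\]
so $z$ is superreal. For the non-positive case, if $z = -ww^\dagger$ then $z^\dagger = -(ww^\dagger)^\dagger = -ww^\dagger = z$ by the same computation, so again $z^\dagger = z$ and $z$ is real. Hence in either case $z$ is a real supernumber, which is the claim.

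There is no serious obstacle here; the only thing that requires a moment's care is making sure the two structural properties of $\dagger$ cited from the text — anti-multiplicativity $(zw)^\dagger = w^\dagger z^\dagger$ and the (unstated but easy) involution property — are genuinely available. The anti-multiplicativity is asserted in the excerpt just before Definition \ref{posi_num}, so I may invoke it directly; the involution property I would justify in half a sentence as above. Everything else is a two-line algebraic manipulation, so I would keep the write-up correspondingly short and not belabor the coefficient-level bookkeeping.
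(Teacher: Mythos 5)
Your proof is correct and follows the same route as the paper, which also deduces $z^\dagger = z$ directly from $z = \pm ww^\dagger$ using the stated properties of the conjugation $\dagger$; your write-up merely makes the anti-multiplicativity and involution steps explicit, which the paper leaves implicit.
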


\begin{proof}
This follows directly from Definition \ref{posi_num} since if $z=ww^\dag$ or $z=-ww^\dag$ for some $w\in\Lambda$, then $z=z^\dag$.
\end{proof}

If one requires invertibility of $z$,  then $w$ is invertible and $z$ is called a {\it positive} (resp. {\it negative}) supernumber, or superpositive (resp. supernegative), and we use the notation $z\succ 0$ (resp. $z\prec 0$).

\begin{proposition}
Let $z\in\Lambda$ be a real supernumber. Then, the following holds:
\begin{itemize}
\item $z\succ0 \Leftrightarrow z_B>0$;
\item $z\prec0 \Leftrightarrow z_B<0$;
\item $z\succeq0 \Leftrightarrow z_B\geq0$;
\item $z\preceq0 \Leftrightarrow z_B\leq0$.
\end{itemize}
\label{body-part}
\end{proposition}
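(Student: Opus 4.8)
The plan is to prove the four equivalences by separating the ``$\Rightarrow$'' and ``$\Leftarrow$'' directions and leaning on two facts: that $z\mapsto z_B$ is a homomorphism $\Lambda\to\mathbb{C}$ satisfying $(zw)_B=z_Bw_B$ and $(z^\dag)_B=\overline{z_B}$ (both immediate, since $i_\alpha i_\beta$ has nonzero body only for $\alpha=\beta=0$, and $\dag$ conjugates scalars), together with the square root supplied by Proposition~\ref{prop-sqrt-lambda}.

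For the ``$\Rightarrow$'' direction, suppose $z=ww^\dag$ with $w=w_B+w_S$. Taking bodies gives $z_B=|w_B|^2\ge 0$, which proves $z\succeq 0\Rightarrow z_B\ge 0$ and, after replacing $z$ by $-z$, also $z\preceq 0\Rightarrow z_B\le 0$. In the strict cases $z$ is invertible, hence so is $w$, hence $w_B\ne 0$ by Proposition~\ref{invertibility-lambda}, and the inequality becomes strict. This settles ``$\Rightarrow$'' in all four items.

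For the ``$\Leftarrow$'' direction in the strict cases, assume $z$ is real with $z_B>0$. Then $z$ is invertible by Proposition~\ref{invertibility-lambda}, and the square root $w$ of Proposition~\ref{prop-sqrt-lambda} is, by the explicit formula there, a \emph{finite polynomial in $z$ with real coefficients} --- this is precisely where $z_B>0$ enters, so that $\sqrt{z_B}$ and the binomial coefficients are real --- with $w_B=\sqrt{z_B}>0$. Since $\dag$ is conjugate-linear and reverses products, $(P(z))^\dag=P(z^\dag)$ for every real-coefficient polynomial $P$, and this equals $P(z)$ because $z$ is real; hence $w^\dag=w$ and $z=w^2=ww^\dag$ with $w$ invertible, i.e. $z\succ 0$. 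Applying this to $-z$ gives the case $z_B<0$.

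The hard part, as I see it, is the ``$\Leftarrow$'' direction of the non-strict items. When $z_B>0$ (resp. $z_B<0$) the $w$ built above already witnesses $z\succeq 0$ (resp. $z\preceq 0$) once the invertibility requirement is dropped, so the only case left is $z$ real with $z_B=0$, that is, $z$ equal to its soul. Now Proposition~\ref{prop-sqrt-lambda} is unavailable and $w$ (necessarily with $w_B=0$) must be produced directly --- for instance by solving $z=ww^\dag$ one homogeneous degree at a time, or by perturbing to the positive element $z+\varepsilon$ and attempting to pass to a limit in $\overline{\Lambda}^{(1)}$. I expect this soul-only case to be the genuine obstacle: it is the single place where neither the body homomorphism nor the square-root lemma applies, and carrying out the direct construction (or finding the right substitute for the limiting argument, whose naive form diverges as $\varepsilon\to 0$) is where I would concentrate the effort.
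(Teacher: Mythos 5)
Your forward direction and your strict converse are correct and essentially coincide with the paper's own (outline of) proof: the paper also reduces the ``$\Rightarrow$'' implications to the body, and obtains $z_B>0\Rightarrow z\succ 0$ (and the negative case) by observing that the square root built in Proposition~\ref{prop-sqrt-lambda} of a real supernumber with positive body is again real --- which is exactly what your real-coefficient polynomial argument shows, together with invertibility from Proposition~\ref{invertibility-lambda}.

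The gap you flag --- the non-strict converse when $z$ is a pure soul, $z_B=0$ --- is genuine, but you should not invest effort in the degree-by-degree construction or the $\varepsilon$-perturbation you sketch: with Definition~\ref{posi_num} read literally, the implication $z_B\ge 0\Rightarrow z\succeq 0$ is simply \emph{false} at $z_B=0$. Take $z=i_1+i_2$, a real supernumber with zero body. If $z=ww^\dag$, comparing bodies gives $|w_B|^2=0$, hence $w_B=0$; but then every nonzero homogeneous component of $ww^\dag$ has degree at least two, so $ww^\dag$ can never produce the degree-one term $i_1+i_2$. The same argument excludes $z=-ww^\dag$, sums of such products, and $w$ taken in $\overline{\Lambda}^{(1)}$ rather than $\Lambda$; the perturbation $z+\varepsilon$ cannot help either, since the obstruction sits in the degree-one component and has nothing to do with convergence as $\varepsilon\to 0$. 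The paper's proof outline is equally silent on this case --- it only invokes the square root, which needs $z_B\neq 0$ --- so the honest conclusion is that your proof is complete exactly to the extent that the statement is true: the ``$\Rightarrow$'' directions hold unconditionally, and the ``$\Leftarrow$'' directions hold when $z_B\neq 0$, while the non-strict ``$\Leftarrow$'' at $z_B=0$ would require weakening the notion of non-negativity (for instance restricting to souls of even degree, or changing Definition~\ref{posi_num}) rather than a cleverer construction of $w$.
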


\begin{proof}[Outline of the proof]
The restriction to the body of the inequalities for $z$ follows trivially. To prove the converse, observe that the square root of positive supernumbers built in the proof of Proposition \ref{prop-sqrt-lambda} is also a real supernumber.
\end{proof}

\begin{remark}
Observe that the notion of superpositivity induces a partial order in $\Lambda$.
\end{remark}

Another important characteristic of $\Lambda$ that often plays a role in the results we present here is the fact that it is a $\mathbb{Z}_2$-graded algebra, i.e., $\Lambda=\Lambda_{even}\oplus \Lambda_{odd}$. The subset $\Lambda_{even}$ is characterized by elements $u$ of the type
\begin{equation}
u=u_0+\sum_{\substack{\alpha\in\mathfrak{I} \\ |\alpha| \ {\text even}}}u_\alpha i_\alpha,
\label{even-number}
\end{equation}
where $|\alpha|$ is the number of elements of $\alpha$. Those supernumbers are called the {\it even} supernumbers. They clearly commute with {\it every} element of $\Lambda$ and, furthermore, form a commutative subalgebra of $\Lambda$. \smallskip

The elements of $\Lambda_{odd}$ can be written as
\begin{equation}
v=\sum_{\substack{\alpha\in\mathfrak{I} \\ |\alpha| \ {\text odd}}}v_\alpha i_\alpha.
\label{odd-number}
\end{equation}
They are called {\it odd} supernumbers. It is easy to see that $\Lambda_{odd}$ is not a subalgebra of $\Lambda$ since the product of two odd supernumbers is an even supernumber.\smallskip

In \cite{2018arXiv180611058A}, we introduced the $p$-norm of a supernumber, proved an inequality they satisfy, and developed some aspects of $\Lambda$ endowed with the $2$-norm. Here, our focus is on aspects of the closure of $\Lambda$ with respect to the $1$-norm, which we define now.

\begin{definition}
The $1$-norm of a supernumber $z\in\Lambda$ is defined as
\begin{equation}
\Vert z \Vert_1 = \sum_{\alpha\in\mathfrak{I}_0} |z_\alpha|,
\label{1norm}
\end{equation}
where $|\cdot|$ is the usual modulus of a complex number.
\end{definition}

We denote by $\overline{\Lambda}^{(1)}$ the closure of $\Lambda$ with respect to the $1$-norm. This set was first introduced by Rogers \cite{rogers1980global,zbMATH00861741} and is usually used as the Grassmannian setting with countably infinite generators -- see e.g. \cite{hoyos1982superfiber,jadczyk1981superspaces,vladimirov1984superanalysis}.\smallskip

The following proposition follows trivially from the definition and, therefore, has its proof omitted.

\begin{proposition}
$\overline{\Lambda}^{(1)}$ endowed with the $1$-norm is a complex normed space.
\end{proposition}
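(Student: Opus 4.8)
The plan is to realize $\overline{\Lambda}^{(1)}$ concretely as a space of absolutely summable families and then to check the three norm axioms, each of which reduces to an elementary property of the complex modulus.

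First I would describe the closure explicitly. An element of $\Lambda$ is a \emph{finite} sum $\sum_{\alpha\in\mathfrak{I}_0}z_\alpha i_\alpha$, and $\|\cdot\|_1$ is just the $\ell^1$-norm of the coefficient family $(z_\alpha)_{\alpha\in\mathfrak{I}_0}$. Since $\mathfrak{I}_0$ is countable, $(\Lambda,\|\cdot\|_1)$ is isometrically the subspace of finitely supported families inside $\ell^1(\mathfrak{I}_0)$, and its closure is therefore all of $\ell^1(\mathfrak{I}_0)$. Concretely, $\overline{\Lambda}^{(1)}$ consists of the formal series $z=\sum_{\alpha\in\mathfrak{I}_0}z_\alpha i_\alpha$ with $z_\alpha\in\mathbb{C}$ and $\sum_{\alpha\in\mathfrak{I}_0}|z_\alpha|<\infty$, and formula \eqref{1norm} continues to define a finite quantity on this set. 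This is also the precise sense in which the infinite version of the decomposition \eqref{Grassmann-decomposition} is to be understood.

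Next I would verify that $\overline{\Lambda}^{(1)}$ is a complex vector space under coefficientwise addition and scalar multiplication. Closure under scalar multiplication is immediate from $\sum_{\alpha}|\lambda z_\alpha|=|\lambda|\sum_{\alpha}|z_\alpha|<\infty$. Closure under addition is the one point needing a line of argument: for $z=\sum_\alpha z_\alpha i_\alpha$ and $w=\sum_\alpha w_\alpha i_\alpha$ in $\overline{\Lambda}^{(1)}$, the triangle inequality for $|\cdot|$ yields $\sum_{\alpha}|z_\alpha+w_\alpha|\le\sum_{\alpha}|z_\alpha|+\sum_{\alpha}|w_\alpha|<\infty$, so $z+w\in\overline{\Lambda}^{(1)}$; the remaining vector space axioms are inherited coordinatewise from $\mathbb{C}$.

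Finally I would check the norm axioms. Homogeneity, $\|\lambda z\|_1=|\lambda|\,\|z\|_1$, and subadditivity, $\|z+w\|_1\le\|z\|_1+\|w\|_1$, are exactly the two computations just performed; nonnegativity $\|z\|_1\ge 0$ is clear, and $\|z\|_1=0$ forces $|z_\alpha|=0$ for every $\alpha\in\mathfrak{I}_0$, hence $z=0$. This gives all the requirements of a complex normed space. I do not anticipate any genuine obstacle here: the only care needed is to make precise what ``closure'' means before manipulating infinite sums, and to note that at this stage nothing is claimed about the product on $\overline{\Lambda}^{(1)}$ — submultiplicativity of $\|\cdot\|_1$ and the Banach algebra structure are treated separately below.
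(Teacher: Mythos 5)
Your proof is correct; the paper simply omits the argument as ``following trivially from the definition,'' and your verification --- identifying $\overline{\Lambda}^{(1)}$ with $\ell^1(\mathfrak{I}_0,\mathbb{C})$ and checking the vector space and norm axioms coefficientwise --- is exactly the standard argument the paper has in mind (and is consistent with the paper's later remark that $\overline{\Lambda}^{(1)}$ is isomorphic to $\ell_1(\mathfrak{I},\mathbb{C})$). Your care in stating what the closure means, and in deferring submultiplicativity to the Banach algebra statement, matches the paper's organization.
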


\begin{proposition}
Let $a\in\overline{\Lambda}^{(1)}$ be such that $i_\alpha a=0$ for every $\alpha\in\mathfrak{I}$. Then, $a=0$.
\end{proposition}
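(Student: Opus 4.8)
The plan is to argue coordinate-wise. Write $a=\sum_{\beta\in\mathfrak{I}_0}a_\beta i_\beta$ with $\sum_{\beta\in\mathfrak{I}_0}|a_\beta|<\infty$. By the very definition of the $1$-norm, each coefficient map $a\mapsto a_\beta$ is a contraction on $\overline{\Lambda}^{(1)}$, so it is enough to show $a_\beta=0$ for every $\beta\in\mathfrak{I}_0$.

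First I would record that, for a fixed $\alpha\in\mathfrak{I}$, left multiplication by $i_\alpha$ is a bounded (indeed contractive) linear operator on $\overline{\Lambda}^{(1)}$. On $\Lambda$ one has $i_\alpha i_\beta=0$ unless $\alpha$ and $\beta$ are disjoint, in which case $i_\alpha i_\beta=(-1)^{\sigma(\alpha,\beta)}i_{\alpha\vee\beta}$, and the map $\beta\mapsto\alpha\vee\beta$ is injective on $\{\beta:\beta\cap\alpha=\emptyset\}$, since $\alpha\vee\beta=\alpha\cup\beta$ as a set and hence $\beta=(\alpha\vee\beta)\setminus\alpha$. Therefore $\|i_\alpha z\|_1=\sum_{\beta\cap\alpha=\emptyset}|z_\beta|\le\|z\|_1$ for $z\in\Lambda$, and this estimate extends the operator to all of $\overline{\Lambda}^{(1)}$ by continuity. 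Consequently $i_\alpha a=\sum_{\beta}a_\beta\,i_\alpha i_\beta$ with the series converging in $\|\cdot\|_1$, and for any $\gamma\in\mathfrak{I}$ with $\alpha\subseteq\gamma$ the $i_\gamma$-coefficient of $i_\alpha a$ equals $(-1)^{\sigma(\alpha,\gamma\setminus\alpha)}a_{\gamma\setminus\alpha}$.

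Now invoke the hypothesis: since $i_\alpha a=0$, extracting coefficients gives $a_{\gamma\setminus\alpha}=0$ for every $\gamma\supseteq\alpha$, equivalently $a_\beta=0$ for every $\beta$ disjoint from $\alpha$. This is where the combinatorial heart of the statement enters: an arbitrary $\beta=(b_1,\dots,b_t)\in\mathfrak{I}_0$ is a \emph{finite} tuple, so picking any integer $N>b_t$ (or $N=1$ if $\beta=0$) and applying the previous step with $\alpha=(N)\in\mathfrak{I}$ forces $a_\beta=0$. As $\beta$ was arbitrary, $a=0$.

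The only delicate point — and the one I would write out carefully — is the passage to the completion: checking that multiplication by $i_\alpha$ is well defined and continuous on $\overline{\Lambda}^{(1)}$ and that the coefficients of $i_\alpha a$ may be computed term by term. This is a routine density-and-continuity argument; by contrast the genuinely decisive observation, that every finite index set omits infinitely many generators, is immediate and is what makes the hypothesis strong enough to conclude.
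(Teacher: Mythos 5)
Your proof is correct, but it follows a genuinely different route from the paper's. The paper reduces to a pure-soul element, splits it by degree as $a=b+c$ (degree-one part plus degree-$\geq 2$ part), argues that the degree-two terms of $i_n b$ cannot be cancelled by terms of $i_n c$, and then disposes of the higher-degree coefficients by an inductive reasoning on $|\beta|$ that is only sketched. You instead extract coefficients directly: using that $\beta\mapsto\alpha\vee\beta$ is injective on indices disjoint from $\alpha$, you identify the $i_{\alpha\vee\beta}$-coefficient of $i_\alpha a$ with $\pm a_\beta$, and then exploit the finiteness of each $\beta$ to choose a fresh generator $\alpha=(N)$ omitted by $\beta$, which kills $a_\beta$ in one stroke, uniformly in $\beta$ and with no induction or degree decomposition. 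Your version also buys two things the paper leaves implicit: a rigorous treatment of the completion (contractivity of left multiplication by $i_\alpha$, hence term-by-term computation of coefficients of $i_\alpha a$ for $a$ in $\overline{\Lambda}^{(1)}$ rather than just $\Lambda$), and the observation that only the singleton indices $\alpha=(N)$ are ever needed, so the conclusion holds under the formally weaker hypothesis $i_n a=0$ for all $n\in\mathbb{N}$. What the paper's approach buys in exchange is mainly expository: it works with low-degree interactions that are easy to visualize, at the cost of an unfinished inductive step. One small point to keep as stated: your parenthetical choice $N=1$ for $\beta=0$ is fine, since the only contribution to the $i_{(N)}$-coefficient of $i_{(N)}a$ comes from $\beta=0$, so the body is handled by the same mechanism rather than needing the separate first step the paper uses.
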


\begin{proof}
It is clear that the body of $a$ should be null. Let us consider, then, the general case of a supernumber in $\overline{\Lambda}^{(1)}$ with pure soul, i.e.,
\[
a=\sum_{\beta\in\mathfrak{I}}a_\beta i_\beta.
\]
We assume there exists $a$ such that $i_\alpha a=0$ for every $\alpha$. We rewrite it as $a=b+c$ by defining
\[
b=\sum_{m\in\mathbb{N}}a_m i_m
\]
and
\[
c=\sum_{p=2}^{\infty}\sum_{\substack{\beta\in\mathfrak{I}; \\ |\beta|=p}} a_\beta i_\beta.
\]
Let $|\alpha|=1$ and $\alpha=n$, where $n$ is a natural number. In this case
\[
i_n b = \sum_{\substack{m\in\mathbb{N}; \\ m\neq n}}a_m i_ni_m.
\]
Since $i_na=0$, the right-hand side of the above expression should be canceled by some terms of $i_nc$. However, this cannot be the case, because if we write
\[
i_nc=\sum_{\beta\in\mathfrak{I}}c_\beta i_\beta,
\]
then we clearly get that $c_\beta=0$ for $|\beta|\leq 2$. Therefore, $b=a_ni_n$. However, since $i_nb=0$ for every $n\in\mathbb{N}$, our final conclusion is that $b=0$. Finally, an inductive reasoning for $|\beta|\geq 2$ shows that $a=0$.
\end{proof}

\begin{proposition}
Let $z,w\in\overline{\Lambda}^{(1)}$. Then, the following inequality holds
\begin{equation}
\Vert zw \Vert_1 \leq \Vert z \Vert_1 \Vert w \Vert_1
\label{1norm-ineq}
\end{equation}
and hence $\overline{\Lambda}^{(1)}$ is a Banach algebra isomorphic to $\ell_1(\mathfrak{I},\mathbb{C})$, when the latter is endowed with its monoid structure.
\end{proposition}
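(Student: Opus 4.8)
The plan is to establish submultiplicativity first on the dense subalgebra $\Lambda$, where every element is a finite sum so that no convergence question arises, and then to transport the multiplication together with the estimate to the completion $\overline{\Lambda}^{(1)}$ by continuity; the identification with $\ell_1$ then drops out essentially by inspection.

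\emph{Step 1: submultiplicativity on $\Lambda$.} Let $z=\sum_{\alpha\in\mathfrak{I}_0}z_\alpha i_\alpha$ and $w=\sum_{\beta\in\mathfrak{I}_0}w_\beta i_\beta$ lie in $\Lambda$, so that both sums are finite. Expanding the product and using that $i_\alpha i_\beta=0$ unless $\alpha$ and $\beta$ are disjoint, in which case $i_\alpha i_\beta=(-1)^{\sigma(\alpha,\beta)}i_{\alpha\vee\beta}$, one obtains $zw=\sum_{\gamma\in\mathfrak{I}_0}(zw)_\gamma\, i_\gamma$ with
\[
(zw)_\gamma=\sum_{\substack{\alpha,\beta\in\mathfrak{I}_0,\ \alpha\cap\beta=\emptyset\\ \alpha\vee\beta=\gamma}}(-1)^{\sigma(\alpha,\beta)}\,z_\alpha w_\beta .
\]
For a fixed $\gamma$ the pairs $(\alpha,\beta)$ occurring here are exactly the factorizations of the finite index set $\gamma$ into an ordered pair of disjoint subsets, so there are only $2^{|\gamma|}$ of them; in particular each coefficient of $zw$ is a finite sum. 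Taking moduli, applying the triangle inequality coefficientwise, and then discarding the disjointness and union constraints (legitimate since all summands are nonnegative) gives
\[
\Vert zw\Vert_1=\sum_{\gamma\in\mathfrak{I}_0}\bigl|(zw)_\gamma\bigr|\ \le\ \sum_{\gamma\in\mathfrak{I}_0}\ \sum_{\substack{\alpha\cap\beta=\emptyset\\ \alpha\vee\beta=\gamma}}|z_\alpha|\,|w_\beta|\ \le\ \Bigl(\sum_{\alpha\in\mathfrak{I}_0}|z_\alpha|\Bigr)\Bigl(\sum_{\beta\in\mathfrak{I}_0}|w_\beta|\Bigr)=\Vert z\Vert_1\,\Vert w\Vert_1 .
\]

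\emph{Step 2: passing to the completion.} Since $\Lambda$ is dense in $\overline{\Lambda}^{(1)}$ and, by Step 1, the bilinear multiplication map on $\Lambda\times\Lambda$ is jointly continuous for $\Vert\cdot\Vert_1$, it extends uniquely to a continuous bilinear map on $\overline{\Lambda}^{(1)}\times\overline{\Lambda}^{(1)}$. Explicitly, for $z,w\in\overline{\Lambda}^{(1)}$ choose $z^{(n)},w^{(n)}\in\Lambda$ with $z^{(n)}\to z$ and $w^{(n)}\to w$; writing $z^{(n)}w^{(n)}-z^{(m)}w^{(m)}=(z^{(n)}-z^{(m)})w^{(n)}+z^{(m)}(w^{(n)}-w^{(m)})$ and invoking the estimate of Step 1 shows that $(z^{(n)}w^{(n)})_n$ is Cauchy, hence convergent, and its limit --- which one checks does not depend on the approximating sequences --- is declared to be $zw$. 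Associativity, the distributive laws, and $1\cdot z=z=z\cdot 1$ are identities between continuous maps holding on the dense set $\Lambda$, so they persist on $\overline{\Lambda}^{(1)}$; and letting $n\to\infty$ in $\Vert z^{(n)}w^{(n)}\Vert_1\le\Vert z^{(n)}\Vert_1\Vert w^{(n)}\Vert_1$ yields \eqref{1norm-ineq} in general. As $\overline{\Lambda}^{(1)}$ is complete (being a completion), it is thereby a unital Banach algebra.

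\emph{Step 3: identification with $\ell_1$.} A Cauchy sequence in $(\Lambda,\Vert\cdot\Vert_1)$ converges coefficientwise to a family $(z_\alpha)_{\alpha\in\mathfrak{I}_0}$ with $\sum_{\alpha}|z_\alpha|<\infty$, and conversely every such family is the $\Vert\cdot\Vert_1$-limit of its finite truncations; hence $z\mapsto(z_\alpha)_{\alpha\in\mathfrak{I}_0}$ is an isometric linear bijection of $\overline{\Lambda}^{(1)}$ onto $\ell_1(\mathfrak{I}_0,\mathbb{C})$. Since the coefficient functionals are continuous, passing to the limit in the formula of Step 1 identifies the product of Step 2 with the twisted convolution $(z\star w)_\gamma=\sum_{\alpha\vee\beta=\gamma}(-1)^{\sigma(\alpha,\beta)}z_\alpha w_\beta$, that is, with the convolution algebra structure on $\ell_1(\mathfrak{I}_0,\mathbb{C})$ carried by the partial operation $\vee$ on $\mathfrak{I}_0$ together with the sign $\sigma$; this is the stated isomorphism. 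The only point requiring genuine care is the well-definedness of the product on the completion, and it dissolves once one notes that each coefficient $(zw)_\gamma$ is a \emph{finite} sum --- over the $2^{|\gamma|}$ decompositions of $\gamma$ --- while Step 1 already controls the whole $\ell_1$-norm; everything else is routine bookkeeping with $\vee$ and the signs, so I anticipate no substantive obstacle.
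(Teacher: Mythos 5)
Your proof is correct. The paper actually states this proposition without proof (it is treated as following directly from the definition of the $1$-norm and the finiteness of each product coefficient), and your three steps --- coefficientwise submultiplicativity on the dense subalgebra $\Lambda$, extension of the multiplication and of the estimate to the completion by continuity of a bounded bilinear map, and the isometric identification of the completion with the space of absolutely summable coefficient families --- are exactly the routine argument the authors leave to the reader, carried out carefully. Two small remarks: the index set for the $\ell_1$ identification should be $\mathfrak{I}_0$ (the body coefficient must be included), which you use correctly even though the statement writes $\mathfrak{I}$; and, as you implicitly note, the product transported to $\ell_1(\mathfrak{I}_0,\mathbb{C})$ is the convolution twisted by the sign $(-1)^{\sigma(\alpha,\beta)}$ and annihilated on overlapping indices, so the phrase ``monoid structure'' in the statement should be read as this signed partial-semigroup convolution --- your formula $(z\star w)_\gamma=\sum_{\alpha\vee\beta=\gamma}(-1)^{\sigma(\alpha,\beta)}z_\alpha w_\beta$ is the accurate version.
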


\begin{proposition}
Let $\overline{\Lambda}^{(1)}_{odd}$ be the odd part of $\overline{\Lambda}^{(1)}$ and $v\in\overline{\Lambda}^{(1)}_{odd}$. Then, $v^2=0$.
\label{odd-square}
\end{proposition}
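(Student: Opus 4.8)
The plan is to exploit the anticommutativity relation \eqref{anticommutative} together with the fact that an odd supernumber in $\overline{\Lambda}^{(1)}$ is a (possibly infinite) $\ell_1$-convergent sum of basis monomials $i_\beta$ with $|\beta|$ odd. Write $v=\sum_{\beta}v_\beta i_\beta$, the sum running over $\beta\in\mathfrak{I}$ with $|\beta|$ odd. Since $\overline{\Lambda}^{(1)}$ is a Banach algebra by \eqref{1norm-ineq}, multiplication is continuous and bilinear, so $v^2=\sum_{\beta,\gamma}v_\beta v_\gamma\, i_\beta i_\gamma$, where the double series converges absolutely because $\sum_{\beta,\gamma}|v_\beta||v_\gamma|\,\|i_\beta i_\gamma\|_1\le\left(\sum_\beta|v_\beta|\right)^2=\|v\|_1^2<\infty$. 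Hence it suffices to show the sum of the off-diagonal pairs cancels in pairs, and that the diagonal terms vanish.

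First I would handle the diagonal: for each fixed $\beta$ with $|\beta|$ odd (in particular $|\beta|\ge 1$), the monomial $i_\beta$ contains at least one generator $i_u$, so $i_\beta i_\beta$ has a repeated factor and equals $0$ by \eqref{anticommutative} (equivalently $i_u^2=0$). Next, for $\beta\ne\gamma$, I pair the term $v_\beta v_\gamma\, i_\beta i_\gamma$ with $v_\gamma v_\beta\, i_\gamma i_\beta$. The key computation is that $i_\beta i_\gamma=(-1)^{|\beta||\gamma|}\,i_\gamma i_\beta$: moving each of the $|\beta|$ generators of $i_\beta$ past each of the $|\gamma|$ generators of $i_\gamma$ costs a factor $(-1)$ by \eqref{anticommutative}, for a total of $(-1)^{|\beta||\gamma|}$. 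Since both $|\beta|$ and $|\gamma|$ are odd, $|\beta||\gamma|$ is odd, so $i_\beta i_\gamma=-i_\gamma i_\beta$. As the complex coefficients commute, $v_\beta v_\gamma\, i_\beta i_\gamma + v_\gamma v_\beta\, i_\gamma i_\beta = v_\beta v_\gamma(i_\beta i_\gamma + i_\gamma i_\beta)=0$. Summing over all unordered pairs $\{\beta,\gamma\}$ with $\beta\neq\gamma$ gives $0$, and together with the vanishing diagonal we conclude $v^2=0$.

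The only point requiring a little care is the rearrangement of the double series into diagonal terms plus symmetric pairs; this is legitimate precisely because of the absolute convergence noted above, which in turn rests on the submultiplicativity \eqref{1norm-ineq} and $\|i_\beta i_\gamma\|_1\le 1$. I do not expect a genuine obstacle here: the entire argument is the classical observation that odd elements of an anticommuting algebra square to zero, and the $\overline{\Lambda}^{(1)}$ setting only adds the (routine) justification that one may manipulate the resulting series termwise. If one prefers to avoid series manipulations altogether, an alternative is to approximate $v$ in $\|\cdot\|_1$ by its truncations $v^{(N)}\in\Lambda_{odd}$, use $(v^{(N)})^2=0$ (a finite computation identical to the above), and pass to the limit using continuity of multiplication; I would mention this as a remark but carry out the direct computation as the main proof.
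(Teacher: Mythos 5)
Your proposal is correct and follows essentially the same route as the paper: both arguments expand $v^2$ as a double sum and cancel terms in symmetric pairs using the anticommutativity $i_\alpha i_\beta + i_\beta i_\alpha = 0$ for odd monomials together with the commutativity of the complex coefficients. Your version simply makes explicit two points the paper's proof treats tersely, namely the vanishing of the diagonal terms and the absolute-convergence justification for rearranging the series, which is a welcome but not substantively different refinement.
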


\begin{proof}
Let $v\in\overline{\Lambda}_{odd}^{(1)}$. Then,
\begin{eqnarray*}
v^2 & = & \frac{1}{2}\sum_{\substack{\alpha,\beta\in\mathfrak{I} \\ |\alpha|,|\beta| \ {\text odd}}} \left(v_\alpha v_\beta i_\alpha i_\beta + v_\beta v_\alpha i_\beta i_\alpha\right) \\
    & = & \frac{1}{2}\sum_{\substack{\alpha,\beta\in\mathfrak{I} \\ |\alpha|,|\beta| \ {\text odd}}}v_\alpha v_\beta \left(i_\alpha i_\beta + i_\beta i_\alpha\right) \\
    & = & 0,
\end{eqnarray*}
since the product is a law of composition.
\end{proof}

The next result concerns invertibility in $\overline{\Lambda}^{(1)}$. With a finite number of generators, a supernumber $z$ is invertible if and only if its body $z_B$ 
is different from $0$, as proved in Proposition \ref{invertibility-lambda}. Note that the proof follows easily from the fact that the soul $z_S$ is nilpotent. In $\overline{\Lambda}^{(1)}$, the latter does not hold in general. However, as the following theorem shows, $z_S$ is quasi-nilpotent and  invertibility is still characterized by $z_B\not=0$.

\begin{theorem}
\label{inv}
Let $z\in\overline{\Lambda}^{(1)}$. Then, $z$ is invertible in $\overline{\Lambda}^{(1)}$ if and only if $z_B\neq0$.
\end{theorem}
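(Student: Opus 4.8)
\smallskip
\noindent\textbf{Proposed proof.}
The argument splits into the easy necessity of $z_B\neq0$ and the substantial sufficiency. For the necessity I would use the \emph{body functional} $b\colon\overline{\Lambda}^{(1)}\to\mathbb C$, $b(z)=z_B$, i.e.\ the coefficient of $i_0=1$. It is linear, contractive for $\Vert\cdot\Vert_1$, and multiplicative: in $zw=\sum_{\alpha,\beta\in\mathfrak I_0}z_\alpha w_\beta\,i_\alpha i_\beta$ every term with $(\alpha,\beta)\neq(0,0)$ is either $0$ or $\pm i_\gamma$ for some $\gamma\in\mathfrak I$, hence has zero body, so $b(zw)=z_Bw_B$. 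Thus $b$ is a unital homomorphism of Banach algebras onto $\mathbb C$ (arguing with $i_n^2=0$ one moreover sees it is the \emph{only} character of $\overline{\Lambda}^{(1)}$, which is where Gelfand theory enters the picture). Applying $b$ to $zw=1$ for a putative inverse $w$ forces $z_Bw_B=1$, hence $z_B\neq0$.

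For the converse, I would factor $z=z_B\bigl(1+z_B^{-1}z_S\bigr)$; since $z_B^{-1}$ is a nonzero scalar, it is central and invertible, so it suffices to prove that $1+u$ is invertible for every \emph{pure-soul} element $u\in\overline{\Lambda}^{(1)}$ (i.e.\ $u_B=0$). The crux --- and the step I expect to be the main obstacle --- is the claim that such a $u$ is \emph{quasi-nilpotent}, $\lim_{n\to\infty}\Vert u^n\Vert_1^{1/n}=0$. In each $\Lambda_N$ the soul is genuinely nilpotent (this is precisely what made Proposition~\ref{invertibility-lambda} immediate), but nilpotency fails in $\overline{\Lambda}^{(1)}$, so one must recover a quantitative substitute by an approximation argument.

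To establish quasi-nilpotency, fix $\varepsilon\in(0,1)$ and, using $\sum_{\alpha}|u_\alpha|<\infty$, pick a finite $F\subset\mathfrak I$ so that $v:=u-u_F$ has $\Vert v\Vert_1<\varepsilon$, where $u_F:=\sum_{\alpha\in F}u_\alpha i_\alpha$ is again pure soul; let $N$ be the largest generator index occurring in $F$, so $u_F\in\Lambda_N$ and each $\alpha$ with $u_\alpha\neq0$ in $u_F$ is a nonempty subset of $\{1,\dots,N\}$. The combinatorial observation is that a product $i_{\alpha_1}\cdots i_{\alpha_{N+1}}$ of $N+1$ such monomials vanishes: it is nonzero only if the $\alpha_j$ are pairwise disjoint, and there are no $N+1$ pairwise disjoint nonempty subsets of an $N$-element set. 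Hence, expanding $u^n=(u_F+v)^n$ into the $2^n$ words in the letters $u_F,v$, any word using $u_F$ at least $N+1$ times contributes $0$, and by submultiplicativity \eqref{1norm-ineq}
\[
\Vert u^n\Vert_1\ \le\ \sum_{j=0}^{\min(n,N)}\binom{n}{j}\,\Vert u_F\Vert_1^{\,j}\,\Vert v\Vert_1^{\,n-j}\ \le\ \sum_{j=0}^{N}\binom{n}{j}\bigl(\Vert u\Vert_1+1\bigr)^{j}\varepsilon^{\,n-j}.
\]
Taking $n$-th roots and using $\binom{n}{j}^{1/n}\to1$ gives $\limsup_n\Vert u^n\Vert_1^{1/n}\le\varepsilon$; since $\varepsilon$ was arbitrary, $\lim_n\Vert u^n\Vert_1^{1/n}=0$.

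Finally, quasi-nilpotency together with the root test makes $\sum_{k\ge0}(-u)^k$ norm-convergent in the Banach algebra $\overline{\Lambda}^{(1)}$, and its sum is a two-sided inverse of $1+u$; equivalently, the spectrum of $z$ reduces to the single point $z_B$. Therefore $z$ is invertible precisely when $z_B\neq0$, with $z^{-1}=z_B^{-1}\sum_{k\ge0}\bigl(-z_B^{-1}z_S\bigr)^{k}$. Every step besides the quasi-nilpotency lemma is routine bookkeeping with the submultiplicative $1$-norm and the Neumann series; the real content is the finite-truncation-plus-disjointness argument that upgrades ``nilpotent in each $\Lambda_N$'' to ``quasi-nilpotent in the completion.''
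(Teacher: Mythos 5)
Your proof is correct, but it follows a genuinely different route from the paper. The paper's argument splits the soul into its even and odd parts: it treats the even part inside the commutative Banach subalgebra $\overline{\Lambda}^{(1)}_{even}$, where Gelfand theory (every character sends $i_\alpha$ to $0$, hence equals the body map) characterizes invertibility, and then disposes of the odd part by the purely algebraic fact that odd elements square to zero (Proposition \ref{odd-square}), writing $z=(1+u)\bigl[1+(1+u)^{-1}v\bigr]$ and inverting the second factor as $1-(1+u)^{-1}v$. You instead prove directly that any pure-soul element of $\overline{\Lambda}^{(1)}$ is quasi-nilpotent, via the finite-truncation-plus-pigeonhole argument (no $N+1$ pairwise disjoint nonempty subsets of $\{1,\dots,N\}$, so any word with too many $u_F$-letters dies), and then invert $1+u$ by a Neumann series; the combinatorial step and the binomial estimate using \eqref{1norm-ineq} are sound, and the interleaved $v$-factors cause no trouble since a product of monomials is nonzero only when all index sets are pairwise disjoint. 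What each approach buys: the paper's proof is shorter once the $\mathbb Z_2$-grading and Gelfand's theorem are on the table, whereas yours is more elementary and self-contained (no commutativity or grading needed), yields the explicit inverse $z^{-1}=z_B^{-1}\sum_{k\ge0}(-z_B^{-1}z_S)^k$, and actually establishes the quasi-nilpotency of $z_S$ that the paper asserts before the theorem but only recovers afterwards, implicitly, through the spectral radius formula in Theorem \ref{ps-convergence}. Your parenthetical claim that the body map is the only character is not needed for the argument, but it is true and harmless.
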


\begin{proof}
At first, assume $z\in\overline{\Lambda}^{(1)}_{even}$, which is a commutative Banach algebra. Then, let $\varphi$ be a homomorphism between $\overline{\Lambda}^{(1)}_{even}$ and $\mathbb{C}$, it is easy to prove that $\varphi(z)=z_B$. In fact, it follows from the fact that
\[
\varphi(i_\alpha)^2=\varphi(i_\alpha^2)=\varphi(0)=0
\]
and
\[
\varphi(1)=1.
\]
Thus by Gelfand's theorem  on invertibility in commutative Banach algebras 
(see e.g. \cite{dougtech}), 
$z$ is invertible if and only if $\varphi(z_B)\not=0$, proving our theorem in the case $z\in\overline{\Lambda}^{(1)}_{even}$. \smallskip

In the general case, note that if $z$ is invertible in $\overline{\Lambda}^{(1)}$, there exists $w\in\overline{\Lambda}^{(1)}$ such that $zw=wz=1$. In particular, $z_Bw_B=1$, which shows that $z_B\neq0$.

Conversely, without loss of generality let $z=1+u+v\in\overline{\Lambda}^{(1)}$, where $u+v$ is the soul of $z$ with $u$ being its even part and $v$ being the odd one. As already discussed, $1+u$ is invertible. Then,
\[
z=\left(1+u\right)\left[1+\left(1+u\right)^{-1}v\right],
\]
which is invertible if and only if $w=1+\left(1+u\right)^{-1}v$ is invertible. Note that $w_S\in\overline{\Lambda}^{(1)}_{odd}$. Thus to complete the proof of the theorem we only need to show that $z\in\overline{\Lambda}^{(1)}_{odd}$ is invertible if $z_B\neq0$. To do so, let $z=1+v$, where $v\in\overline{\Lambda}^{(1)}_{odd}$. Since by Proposition \ref{odd-square}, $v^2=0$, it follows trivially that $w=1-v$ is the inverse of $z$.
\end{proof}

\begin{theorem}
Let $\lambda\in\mathbb{C}$ be a complex variable and the power series
\begin{equation}
f(\lambda) = \sum_{n=0}^\infty c_n \lambda^n, \quad c_n\in\mathbb{C},
\label{power-series}
\end{equation}
be analytic in a neighborhood of the origin. If $z_S\in\overline{\Lambda}^{(1)}$ is a supernumber with zero body, then $f(z_S)$ converges in $\overline{\Lambda}^{(1)}$.
\label{ps-convergence}
\end{theorem}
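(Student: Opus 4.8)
The plan is to reduce the convergence of $f(z_S)=\sum_{n\ge 0}c_nz_S^n$ to two ingredients: that $z_S$ is quasi-nilpotent in the Banach algebra $\overline{\Lambda}^{(1)}$, i.e. $\lim_{n\to\infty}\Vert z_S^n\Vert_1^{1/n}=0$, and the root test together with completeness of $\overline{\Lambda}^{(1)}$. Indeed, once quasi-nilpotency is available, the tail of the series is dominated in norm by a convergent scalar series, and absolute convergence in a Banach space gives convergence of the series itself.

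First I would make the quasi-nilpotency of $z_S$ precise, which is the heart of the matter. Here Theorem \ref{inv} does the work: for any $\lambda\in\cc\setminus\{0\}$, the supernumber $z_S-\lambda$ has body $-\lambda\neq 0$ and is therefore invertible in $\overline{\Lambda}^{(1)}$, so the spectrum of $z_S$ in this unital Banach algebra (unit $i_0=1$, submultiplicativity \eqref{1norm-ineq}) is exactly $\{0\}$. By Gelfand's spectral radius formula, $\lim_{n\to\infty}\Vert z_S^n\Vert_1^{1/n}$ equals the spectral radius of $z_S$, which is $0$. This is the step I expect to be the main obstacle, though not because it is technically hard once Theorem \ref{inv} is in hand: it is where all the content lies, and a direct combinatorial estimate of $\Vert z_S^n\Vert_1$ (using disjointness of supports and the fact that $z_S^n$ is supported on $i_\gamma$ with $|\gamma|\ge n$) would be considerably more delicate, since the soul is not nilpotent in $\overline{\Lambda}^{(1)}$. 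The point is to recognize that the preceding theorem already computes the spectrum.

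Finally I would invoke Cauchy--Hadamard. Since $f$ is analytic in a neighborhood of the origin, its radius of convergence $R$ is positive, so $\limsup_{n\to\infty}|c_n|^{1/n}=1/R<\infty$. Combining this with the previous step,
\[
\limsup_{n\to\infty}\bigl(|c_n|\,\Vert z_S^n\Vert_1\bigr)^{1/n}\le\Bigl(\limsup_{n\to\infty}|c_n|^{1/n}\Bigr)\cdot\Bigl(\lim_{n\to\infty}\Vert z_S^n\Vert_1^{1/n}\Bigr)=\frac{1}{R}\cdot 0=0 .
\]
By the root test, $\sum_{n\ge 0}|c_n|\,\Vert z_S^n\Vert_1<\infty$, so $\sum_{n\ge 0}c_nz_S^n$ converges absolutely; since $\overline{\Lambda}^{(1)}$ is complete, the series converges in $\overline{\Lambda}^{(1)}$, which is the claim. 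If one prefers to avoid $\limsup$ bookkeeping, it is enough to fix $\varepsilon\in(0,R/4)$, note that $\Vert z_S^n\Vert_1<\varepsilon^n$ and $|c_n|<(2/R)^n$ for all large $n$, and observe that $(2\varepsilon/R)^n$ is summable.
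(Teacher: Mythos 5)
Your argument is correct and follows essentially the same route as the paper: invoke Theorem \ref{inv} to see that the spectrum of $z_S$ is $\{0\}$, use the spectral radius formula to get $\lim_{n\to\infty}\Vert z_S^n\Vert_1^{1/n}=0$, and combine with the positive radius of convergence of $f$. Your write-up is in fact slightly more complete, since it makes explicit the root-test/absolute-convergence step and the use of completeness of $\overline{\Lambda}^{(1)}$, which the paper leaves implicit.
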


\begin{proof}
Note that the power series referred in the statement of the theorem has a positive radius of convergence centered at the origin. Also, recall that the spectral radius formula asserts that
\[
\lim_{n\rightarrow\infty}\Vert z_S^n\Vert_1^{1/n} = \sup\left\{|x|\suchthat x\in\rho(z_S)\right\},
\]
where $\rho(z_S)$ is the spectral radius of $z_S$. Then, the power series we are considering converges if $\rho(z_S)=0$. But that is the case since, by Theorem \ref{inv}, if $\lambda\in\mathbb{C}$, $z_S-\lambda$ is not invertible if and only if $\lambda=0$.
\end{proof}

\begin{corollary}
Let $f(\lambda)$ be a complex analytic function for $\lambda\in\Omega$. Thus if $z\in\overline{\Lambda}^{(1)}$ is such that $z_B\in\Omega$, $f(z)$ converge in $\overline{\Lambda}^{(1)}$.
\label{f-extension}
\end{corollary}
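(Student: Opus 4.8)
The plan is to reduce the statement to Theorem \ref{ps-convergence} by recentering the analytic function at the body of $z$. Write $z = z_B + z_S$ with $z_B \in \mathbb{C}$ the body and $z_S \in \overline{\Lambda}^{(1)}$ the soul, so that $z_S$ has zero body. Since $f$ is analytic on the open set $\Omega$ and $z_B \in \Omega$, there is a radius $r > 0$ such that $f$ admits the Taylor expansion
\[
f(\lambda) = \sum_{n=0}^\infty \frac{f^{(n)}(z_B)}{n!}\,(\lambda - z_B)^n
\]
for all $\lambda$ with $|\lambda - z_B| < r$; in particular this power series in the variable $\mu = \lambda - z_B$ is analytic in a neighborhood of the origin, which is exactly the hypothesis required in Theorem \ref{ps-convergence}.

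Next I would define $f(z)$ to be the value of this recentered series at $\mu = z_S$, i.e. $f(z) = \sum_{n=0}^\infty \frac{f^{(n)}(z_B)}{n!}\, z_S^n$. Because $z_S$ has zero body, Theorem \ref{ps-convergence} applies verbatim and guarantees that this series converges in the Banach algebra $\overline{\Lambda}^{(1)}$. This is really the whole argument: the corollary is just Theorem \ref{ps-convergence} applied after the translation $\lambda \mapsto \lambda - z_B$, using that translating an analytic function keeps it analytic and that subtracting the body leaves a soul.

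The only point that deserves a sentence of care — and the closest thing to an obstacle — is well-definedness: one should note that the assignment $z \mapsto f(z)$ does not depend on any choices, since the Taylor coefficients $f^{(n)}(z_B)/n!$ are intrinsic to $f$ and $z_B$, and the decomposition $z = z_B + z_S$ is unique. If one wishes, one can also remark that when $\Omega$ is a neighborhood of $0$ and $z_B = 0$ this recovers Theorem \ref{ps-convergence}, and that the construction is consistent with the finitely-generated case (where $z_S$ is nilpotent and the sum is finite, matching the classical analytic functional calculus). Everything else is routine and I would not belabor it.
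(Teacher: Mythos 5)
Your argument is correct and is essentially the paper's own proof: both define $f(z)$ by the Taylor expansion $\sum_{n=0}^\infty \frac{f^{(n)}(z_B)}{n!}\,z_S^n$ recentered at the body $z_B$ and then invoke Theorem \ref{ps-convergence} for the soul $z_S$, which has zero body. Your extra remarks on well-definedness and consistency with the finitely-generated (nilpotent) case are fine but not needed beyond what the paper records.
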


\begin{proof}
Observe that we can (at least formally) write
\begin{equation}
f(z) = \sum_{n=0}^\infty \frac{f^{(n)}(z_B)}{n!} z_S^n.
\label{f-extension}
\end{equation}
In fact, by Theorem \ref{ps-convergence}, such a power series converges in $\overline{\Lambda}^{(1)}$.
\end{proof}

\begin{remark}
We note that formula \eqref{f-extension} usually appears in the literature as a formal power series -- see for instance \cite{MR1172996}. In Corollary \ref{f-extension}, 
it really has a meaning as an infinite sum, in the sense that it converges in $\overline{\Lambda}^{(1)}$.
\end{remark}

\begin{corollary}
In expression \eqref{power-series}, let $c_n\in\overline{\Lambda}^{(1)}$. If there exists a real number $\rho>0$ such that $f(\lambda)\in\overline{\Lambda}^{(1)}$ for every $0\le \lambda<\rho$, then
\[
f(z)=\sum_{n=0}^\infty c_n z^n \quad\text{and}\quad f(z)=\sum_{n=0}^\infty z^n c_n
\]
converge for every $z\in\overline{\Lambda}^{(1)}$ such that $|z_B|<\rho$.
\end{corollary}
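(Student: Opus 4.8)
The plan is to prove that $\sum_n c_n z^n$ is \emph{absolutely} convergent in $\overline{\Lambda}^{(1)}$; since $\overline{\Lambda}^{(1)}$ is a Banach algebra this yields convergence, and the argument for $\sum_n z^nc_n$ is identical because $\Vert z^nc_n\Vert_1\le\Vert z^n\Vert_1\Vert c_n\Vert_1$ as well. The key point to keep in mind is that the crude estimate $\Vert z^n\Vert_1\le\Vert z\Vert_1^n$ is useless here, since $\Vert z\Vert_1=|z_B|+\Vert z_S\Vert_1$ may well exceed $\rho$; the proof must instead exploit that $z_B\in\mathbb{C}$ is central and that $z_S$ is quasi-nilpotent.

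First I would extract from the hypothesis the scalar estimate $\sum_{n=0}^\infty\Vert c_n\Vert_1 r^n<\infty$ for every $r\in[0,\rho)$: convergence of $\sum_n c_n\lambda^n$ in $\overline{\Lambda}^{(1)}$ forces $\Vert c_n\Vert_1\lambda^n\to 0$, hence $\Vert c_n\Vert_1\lambda^n\le M_\lambda$, and comparing $r<\lambda<\rho$ gives $\Vert c_n\Vert_1 r^n\le M_\lambda(r/\lambda)^n$. Now fix $z$ with $|z_B|<\rho$, choose $r$ with $|z_B|<r<\rho$, and set $M=\sum_{n}\Vert c_n\Vert_1 r^n<\infty$.

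Next, since $z_B$ commutes with $z_S$, the binomial theorem gives $z^n=\sum_{k=0}^n\binom{n}{k}z_B^{\,n-k}z_S^{\,k}$, whence $\Vert z^n\Vert_1\le\sum_{k=0}^n\binom{n}{k}|z_B|^{\,n-k}\Vert z_S^{\,k}\Vert_1$. Multiplying by $\Vert c_n\Vert_1$, summing over $n$, and interchanging the two sums of nonnegative terms (Tonelli), I obtain
\[
\sum_{n=0}^\infty\Vert c_n\Vert_1\Vert z^n\Vert_1\ \le\ \sum_{k=0}^\infty\Vert z_S^{\,k}\Vert_1\Big(\sum_{n\ge k}\binom{n}{k}\Vert c_n\Vert_1\,|z_B|^{\,n-k}\Big).
\]
For the inner sum I would use the elementary inequality $\binom{n}{k}|z_B|^{\,n-k}(r-|z_B|)^k\le\big(|z_B|+(r-|z_B|)\big)^n=r^n$, which gives $\sum_{n\ge k}\binom{n}{k}\Vert c_n\Vert_1|z_B|^{\,n-k}\le M/(r-|z_B|)^k$.

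Finally, by Theorem \ref{inv} the supernumber $z_S-\lambda$ is invertible precisely when $\lambda\neq 0$ (its body is $-\lambda$), so the spectrum of $z_S$ equals $\{0\}$, and the spectral radius formula — exactly as used in the proof of Theorem \ref{ps-convergence} — yields $\Vert z_S^{\,k}\Vert_1^{1/k}\to 0$. Combining the last display with the bound on the inner sum,
\[
\sum_{n=0}^\infty\Vert c_n\Vert_1\Vert z^n\Vert_1\ \le\ M\sum_{k=0}^\infty\frac{\Vert z_S^{\,k}\Vert_1}{(r-|z_B|)^k}\ =\ M\sum_{k=0}^\infty\left(\frac{\Vert z_S^{\,k}\Vert_1^{1/k}}{\,r-|z_B|\,}\right)^{k},
\]
and the right-hand side converges by the root test, since $\Vert z_S^{\,k}\Vert_1^{1/k}/(r-|z_B|)\to 0$. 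This establishes absolute convergence and hence the claim. The main obstacle — indeed essentially the only subtlety — is recognizing that one cannot argue with $\Vert z\Vert_1$ directly and must peel off the central body $z_B$ via the binomial theorem and invoke quasi-nilpotency of the soul $z_S$; the remaining ingredients are the Tonelli interchange, the binomial inequality for the inner sum, and the root test.
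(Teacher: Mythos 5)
Your proof is correct, and it takes a more quantitative route than the paper's. The paper argues by decomposing each $c_n$ into its Grassmann components, formally regrouping $\sum_n z^n c_{n\alpha}$ as a power series $\sum_n z_S^n\, d_{n\alpha}(z_B)$ in the soul, and then citing Theorem \ref{ps-convergence}; the hypothesis $f(\lambda)\in\overline{\Lambda}^{(1)}$ for $0\le\lambda<\rho$ and the justification of the regrouping are left implicit. You instead work directly with the $1$-norm of the supernumber coefficients: you first extract the geometric bound $\sum_n\Vert c_n\Vert_1 r^n<\infty$ for $r<\rho$ from the hypothesis, then prove absolute convergence via the binomial expansion of $z^n=(z_B+z_S)^n$ (legitimate since $z_B$ is a central scalar), a Tonelli interchange, the elementary bound $\binom{n}{k}|z_B|^{n-k}\le r^n/(r-|z_B|)^k$, and the root test combined with quasi-nilpotency of $z_S$ (the same spectral-radius fact underlying Theorem \ref{ps-convergence}). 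Both arguments hinge on the same two structural facts — centrality of the body and quasi-nilpotency of the soul — but your version supplies the explicit estimates that make the paper's rearrangement and its appeal to Theorem \ref{ps-convergence} rigorous (in particular, it controls the growth of the regrouped coefficients and avoids having to sum over the Grassmann components separately), at the cost of being longer; the paper's version is shorter by delegating the analytic content to the earlier theorem.
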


\begin{proof}
Let $\lambda<\rho$ be a real number and $c_n = \sum_{\alpha\in\mathfrak{I}_0} {c_n}_\alpha i_\alpha$. We can write
\[
f(\lambda) = \sum_{n=0}^\infty c_n \lambda^n = \sum_{\alpha\in\mathfrak{I}_0}\left(\sum_{n=0}^\infty \lambda^n {c_n}_\alpha\right) i_\alpha.
\]
Moreover, note that there exist complex coefficients ${d_n}_\alpha(z_B)$ such that
\[
\sum_{n=0}^\infty z^n {c_n}_\alpha=\sum_{n=0}^\infty z_S^n {d_n}_\alpha(z_B) = \sum_{n=0}^\infty {d_n}_\alpha(z_B) z_S^n.
\]
Hence, by Theorem \ref{ps-convergence}, the power series
\[
f(z)=\sum_{n=0}^\infty c_n z^n = \sum_{n=0}^\infty d_n(z_B) z_S^n
\]
and
\[
f(z)=\sum_{n=0}^\infty z^n c_n = \sum_{n=0}^\infty z_S^n d_n(z_B)
\]
converge for every $z$ such that $z_B<\rho$.
\end{proof}

The next results concern the $k$-th root $z^{1/k}$ of a positive supernumber $z$. By $z^{1/k}$, we mean the analytic extension of $\lambda^{1/k}$ defined in $\mathbb{C}\setminus(-\infty,0]$.

\begin{corollary}
Let $z\in\overline{\Lambda}^{(1)}$ be an invertible supernumber. Then, there exists an element $w\in\overline{\Lambda}^{(1)}$ such that $z=w^k$, where $k\geq2$.
\label{k-root}
\end{corollary}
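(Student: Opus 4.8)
The plan is to deduce this from the holomorphic functional calculus of Corollary~\ref{f-extension} together with the classical power-series expansion of the $k$-th root. Since $z$ is invertible, Theorem~\ref{inv} gives $z_B\neq0$. As $\mathbb{C}$ is algebraically closed, fix $c\in\mathbb{C}$ with $c^k=z_B$ and pick a branch $g$ of $\lambda^{1/k}$ that is holomorphic on an open disk $\Omega$ centered at $z_B$ and not containing $0$, normalized by $g(z_B)=c$; such a branch exists precisely because $z_B\neq0$, and since the statement only asks for the existence of \emph{some} $k$-th root there is no need to worry about whether $z_B$ happens to lie on the cut $(-\infty,0]$. Applying Corollary~\ref{f-extension} to $g$ and to $z$ (whose body lies in $\Omega$) produces
\[
w:=g(z)=\sum_{n=0}^\infty\frac{g^{(n)}(z_B)}{n!}\,z_S^n\in\overline{\Lambda}^{(1)},
\]
the series converging because $z_S$ is quasi-nilpotent (Theorem~\ref{ps-convergence}).

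It remains to check $w^k=z$. For this I would first record that the map $f\mapsto f(z)$ is multiplicative: the closed unital subalgebra $A\subseteq\overline{\Lambda}^{(1)}$ generated by $z_S$ is commutative, and in a commutative Banach algebra the Cauchy product of two absolutely convergent power series in the single element $z_S$ corresponds to multiplying generating functions, which is exactly the Leibniz rule $(fg)^{(n)}=\sum_{j+\ell=n}\binom{n}{j}f^{(j)}g^{(\ell)}$ repackaged through the Taylor coefficients. Granting this, consider $h(\lambda):=g(\lambda)^k$, holomorphic on $\Omega$; since $h(\lambda)=\lambda$ identically on $\Omega$, Corollary~\ref{f-extension} applied to $h$ gives $h(z)=z_B+z_S=z$, while multiplicativity gives $h(z)=g(z)^k=w^k$. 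Hence $w^k=z$.

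I expect the only real (and minor) obstacle to be this multiplicativity of the functional calculus; the construction of $w$ is otherwise immediate. If one prefers to bypass functional-calculus language entirely, an equivalent self-contained route is to set $w:=c\sum_{n=0}^\infty\binom{1/k}{n}(z_B^{-1}z_S)^n$, observe convergence from quasi-nilpotence of $z_S$, and expand $w^k$ using the absolute convergence of the series together with the formal identity $\bigl(\sum_{n\ge0}\binom{1/k}{n}t^n\bigr)^k=1+t$ in $\mathbb{C}[[t]]$, which yields $w^k=c^k(1+z_B^{-1}z_S)=z_B+z_S=z$.
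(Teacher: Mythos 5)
Your proof is correct and takes essentially the same route as the paper: the paper also obtains $w$ by applying Corollary \ref{f-extension} to (a branch of) $\lambda^{1/k}$ at the nonzero body $z_B$, relying on quasi-nilpotence of $z_S$ for convergence. The only difference is that you explicitly verify $w^k=z$ (via multiplicativity of the functional calculus on the commutative subalgebra generated by $z_S$, or the binomial-series identity), a step the paper leaves implicit.
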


\begin{proof}
Recall that the complex function $f(\lambda)=\lambda^{1/k}$ is analytic for every complex $\lambda\neq0$. Then, by Corollary \ref{f-extension}, $f$ converges in $\overline{\Lambda}^{(1)}$ for every $z\in\overline{\Lambda}^{(1)}$ such that $z_B\neq0$.
\end{proof}

\begin{corollary}
Let $z\in\overline{\Lambda}^{(1)}$ be a positive supernumber. Then, $z$ has a superreal square root.
\label{positive-sqrt}
\end{corollary}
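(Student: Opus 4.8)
The plan is to produce the square root via Corollary \ref{k-root} and then verify, directly on the defining series, that it is fixed by the conjugation $\dag$.

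First I would extract the elementary consequences of $z\succ 0$. By definition there is an invertible $w\in\overline{\Lambda}^{(1)}$ with $z=ww^\dag$. Since $\dag$ is a conjugate-linear involutive anti-automorphism of $\overline{\Lambda}^{(1)}$, one gets $z^\dag=(ww^\dag)^\dag=(w^\dag)^\dag w^\dag=ww^\dag=z$, so $z$ is superreal; and since the body map $a\mapsto a_B$ is multiplicative with $(w^\dag)_B=\overline{w_B}$, one gets $z_B=w_B\overline{w_B}=|w_B|^2$, which is a strictly positive real number because invertibility of $w$ forces $w_B\neq0$ (Theorem \ref{inv}). Subtracting the real body from the identity $z^\dag=z$, and using that $\dag$ preserves the absence of an $i_0$-term, I also record $z_S^\dag=z_S$.

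Next I would apply Corollary \ref{k-root} with $k=2$ — equivalently Corollary \ref{f-extension} applied to the branch $f(\lambda)=\lambda^{1/2}$ of the square root, analytic on $\mathbb{C}\setminus(-\infty,0]$ and hence near $z_B>0$. This produces
\[
w:=f(z)=\sum_{n=0}^\infty\frac{f^{(n)}(z_B)}{n!}\,z_S^n\in\overline{\Lambda}^{(1)},\qquad w^2=z,
\]
the series converging by Theorem \ref{ps-convergence} since $z_S$ is quasi-nilpotent. It then remains to see $w^\dag=w$. The scalars $c_n:=f^{(n)}(z_B)/n!$ are real, being the Taylor coefficients at $t=0$ of $t\mapsto\sqrt{z_B+t}$ with $z_B>0$ real. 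The map $\dag$ is an isometry of $(\overline{\Lambda}^{(1)},\Vert\cdot\Vert_1)$, hence continuous and applicable term by term; combining this with $(z_S^n)^\dag=(z_S^\dag)^n=z_S^n$ and $\overline{c_n}=c_n$ yields
\[
w^\dag=\sum_{n=0}^\infty \overline{c_n}\,(z_S^\dag)^n=\sum_{n=0}^\infty c_n\, z_S^n=w,
\]
so $w$ is a superreal square root of $z$.

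I expect the only point requiring genuine care to be the interchange of $\dag$ with the infinite sum, which is immediate once one observes $\dag$ is $\Vert\cdot\Vert_1$-isometric; the remaining ingredients — multiplicativity of the body map, the reality of the Taylor coefficients of the principal square root at a positive argument, and $z_S^\dag=z_S$ — are routine bookkeeping with the $\mathbb{Z}_2$-grading. An alternative, essentially equivalent route would verify superreality at the level of finite truncations in $\Lambda$, where Proposition \ref{body-part} together with the explicit series in the proof of Proposition \ref{prop-sqrt-lambda} already furnish a superreal square root, and then pass to the limit in $\overline{\Lambda}^{(1)}$.
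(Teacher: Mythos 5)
Your proposal is correct and follows essentially the same route as the paper: existence of the square root via Corollary \ref{k-root} (i.e.\ Corollary \ref{f-extension} applied to $\lambda^{1/2}$ near $z_B>0$), and superreality from $z^\dag=z$ by commuting $\dag$ with the defining series. The only difference is that you spell out in detail the step the paper dismisses as ``easy to check'' ($\sqrt{z}^{\,\dag}=\sqrt{z^\dag}$), via the reality of the Taylor coefficients at $z_B>0$, the identity $z_S^\dag=z_S$, and the $\Vert\cdot\Vert_1$-isometry of $\dag$ justifying the term-by-term interchange.
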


\begin{proof}
Let $z\in\overline{\Lambda}^{(1)}$ is a positive supernumber. By Corollary \ref{k-root}, $1+z_S/z_B$ has a square root since its body $z_B$ is different from zero.  Moreover, it is easy to check that $\sqrt{z}^\dag=\sqrt{z^\dag}$. Since $z^\dag=z$ in our case, we conclude that $\sqrt{z}^\dag=\sqrt{z}$, i.e., the square root of $z$ is superreal.
\end{proof}

\begin{remark}
{\rm Corollary \ref{positive-sqrt} implies that Proposition \ref{body-part} can be extended to supernumbers in $\overline{\Lambda}^{(1)}$.}
\end{remark}

One may also consider elements in the module $\left(\overline{\Lambda}^{(1)}\right)^{p\times q}$, i.e., $\overline{\Lambda}^{(1)}$-valued matrices, or supermatrices. This is what we explore in the next section.


\section{Algebra of $\overline{\Lambda}^{(1)}$-valued matrices}
\label{sec3}
The theory of matrices with entries in $\overline{\Lambda}^{(1)}$ present a number of difficulties. Since we are in a non-commutative setting we cannot resort to
results from e.g \cite{MR1409610}; on the other hand, and unlike the quaternions a spectral theorem for (appropriately defined) Hermitian matrices remains to be stated.
Still, the notion of positivity for Grassmann numbers introduced in Definition \ref{posi_num} allows us to develop the tools we need.\smallskip

We start this section extending the conjugation $\dag$ to matrices $M$ over the module $\left(\overline{\Lambda}^{(1)}\right)^{p\times q}$. This is done in the following way
\[
M^*=(m_{kj}^\dag).
\]
Note that
\[
(ML)^*=L^*M^*
\]
for matrices $M$ and $L$ of appropriate sizes.\smallskip

We note that the module $\left(\overline{\Lambda}^{(1)}\right)^{p\times q}$ is built by replacing the coefficients of a complex vector space of dimension $p\times q$ with supernumbers in $\overline{\Lambda}^{(1)}$. The usual way supermatrices are treated in the literature is from a construction of a module that uses a superspace as a ``basis''. In a superspace, one has $n_1$ even and $n_2$ odd basis vectors. Similarly to the case of supernumbers, the even supervectors behave like a regular vector. However, the odd ones require special attention. Moreover, it is standard to consider real even elements and imaginary odd ones. Such a choice leads to extra negative signs when computing the adjoint of a supervector. For more details, an introduction to supervector spaces is given in the first chapter of \cite{MR1172996}.\smallskip

As already mentioned, we do not consider supervector spaces in our approach. Equivalently, one could say we study the cases where $n_2=0$.

\begin{definition}
The norm of a matrix $M=(m_{jk})\in\left(\overline{\Lambda}^{(1)}\right)^{p\times q}$ is defined by
\begin{equation}
\Vert M\Vert_1 \equiv \sum_{j,k} \Vert m_{jk}\Vert_1.
\label{matrix-norm}
\end{equation}
\end{definition}

\begin{proposition}
Let $M\in\left(\overline{\Lambda}^{(1)}\right)^{p\times p}$, and $c,d\in\left(\overline{\Lambda}^{(1)}\right)^{p\times 1}$. Then, $d^*Mc=0$ for every $c$ and $d$ if and only if $c^*Mc=0$ for every $c$. Moreover, $M=0$ when either of these conditions hold.
\label{null-matrix}
\end{proposition}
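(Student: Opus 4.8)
The plan is to prove that the three statements (i) $d^*Mc=0$ for all $c,d$, (ii) $c^*Mc=0$ for all $c$, and (iii) $M=0$ are equivalent. The implications (iii)$\Rightarrow$(i)$\Rightarrow$(ii) are immediate from the definitions, so all the work lies in (ii)$\Rightarrow$(i), which I would obtain by the classical complex polarization identity, and in (i)$\Rightarrow$(iii), which follows by evaluating on standard basis vectors.

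For (ii)$\Rightarrow$(i), I would first replace $c$ by $c+d$ in (ii); expanding $(c^*+d^*)M(c+d)$ by distributivity and using the hypotheses $c^*Mc=d^*Md=0$ gives
\[
c^*Md + d^*Mc = 0 \qquad\text{for all } c,d\in(\overline{\Lambda}^{(1)})^{p\times1}.
\]
Then I would use that $\overline{\Lambda}^{(1)}$ is an algebra over $\mathbb{C}$ and that $\dag$ acts as complex conjugation on the coefficients, so that $(\lambda z)^\dag=\overline{\lambda}\,z^\dag$ for $\lambda\in\mathbb{C}$; in particular $(ic)^*=-i\,c^*$, with $i$ a central scalar. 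Substituting $ic$ for $c$ in the displayed identity yields $-i\,c^*Md + i\,d^*Mc=0$, i.e. $d^*Mc = c^*Md$. Adding this to the displayed identity gives $2\,d^*Mc=0$, and since $2$ is invertible in $\overline{\Lambda}^{(1)}$ (it lies in $\mathbb{C}$), we conclude $d^*Mc=0$ for all $c,d$, which is (i).

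For (i)$\Rightarrow$(iii), I would evaluate $d^*Mc$ at $c=e_k$ and $d=e_j$, where $e_j$ denotes the column vector in $(\overline{\Lambda}^{(1)})^{p\times1}$ with the scalar $1$ in the $j$-th slot and $0$ elsewhere. Since $1^\dag=1$ we have $e_j^*=e_j^{\mathrm T}$, whence $e_j^*Me_k=m_{jk}$; therefore $m_{jk}=0$ for every $j,k$, i.e. $M=0$. This closes the cycle and in particular gives the last assertion of the proposition.

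I do not expect a genuine obstacle: this is the complex polarization argument transported essentially verbatim. The only point requiring care is the behaviour of the conjugation under multiplication by the imaginary unit, $(ic)^*=-i\,c^*$, which is exactly the ``complex conjugation of the coefficients'' characterization of $\dag$ recalled earlier in the paper. It is worth emphasizing in the write-up that neither positivity nor the reality of $c^*Mc$ is used — all manipulations are purely algebraic — and that divisibility by $2$ (and invertibility of $i$) is available precisely because $\overline{\Lambda}^{(1)}$ contains $\mathbb{C}$, which is the feature distinguishing this from the real case.
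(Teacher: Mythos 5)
Your proposal is correct and follows essentially the same route as the paper: the conjugate-linear polarization trick over $\mathbb{C}$ (using $(ic)^*=-i\,c^*$ and invertibility of $2$, resp.\ $4$) to pass from $c^*Mc=0$ to $d^*Mc=0$, and then evaluation at the standard basis vectors $e_j,e_k$ to get $m_{jk}=0$. The only cosmetic difference is that you derive polarization in two steps rather than quoting the closed-form identity, which incidentally sidesteps a typo in the paper's displayed identity (it is missing the weights $i^k$ in the sum).
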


\begin{proof}
Let $M=(m_{jk})\in\left(\overline{\Lambda}^{(1)}\right)^{p\times p}$. If $d^*Mc=0$ for every $c$ and $d$, then it is clear that $c^*Mc=0$ for every $c$ since this is just the particular case $d=c$. To prove the converse, observe that, by the polarization identity,
\begin{equation}
\label{polarization}
d^*Mc = \frac{1}{4}\sum_{k=0}^3 \left(c^*+(-i)^kd^*\right)M\left(c+i^kd\right).
\end{equation}
Because each term of the sum is of the type $a^*Ma$, where $a\in\left(\overline{\Lambda}^{(1)}\right)^{p\times 1}$, they are zero by assumption. Therefore, $d^*Mc=0$.

Finally, to see that such a $M$ is the null matrix we observe that if the $j$-th component of $d$ and $c$ are given by $d_j=\delta_{jr}$ and $c_j=\delta_{js}$, $r,s=1,\hdots,p$, then $d^*Mc=0$ becomes just $m_{rs}=0$.
\end{proof}

\begin{proposition}
Let $M\in\left(\overline{\Lambda}^{(1)}\right)^{p\times p}$ be a matrix such that $M_B$ is regular, meaning that all its main minor matrices are invertible. 
Then, $M$ can be factorized as $M=LDU$, where $D$ is a diagonal matrix composed by invertible supernumbers, and $L$ and $U$ are, respectively, lower and upper triangular matrices with main diagonals composed by ones.
\label{decomposition}
\end{proposition}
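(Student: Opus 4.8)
The plan is to mimic the classical proof of the $LDU$ (Gauss) factorization over a field, but carry it out carefully over the non-commutative Banach algebra $\overline{\Lambda}^{(1)}$, keeping track of the side on which inverses act. The key observation that makes everything work is Theorem \ref{inv}: a supernumber in $\overline{\Lambda}^{(1)}$ is invertible precisely when its body is nonzero, and the body map $M\mapsto M_B$ is an algebra homomorphism from $\left(\overline{\Lambda}^{(1)}\right)^{p\times p}$ to $\mathbb{C}^{p\times p}$. Hence the hypothesis that every leading principal minor matrix of $M_B$ is invertible over $\mathbb{C}$ will be exactly what is needed to guarantee that the pivots produced along the way are invertible in $\overline{\Lambda}^{(1)}$.

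First I would argue by induction on $p$, the matrix size. The case $p=1$ is trivial: $M=(m_{11})$ with $(m_{11})_B\neq 0$, so $M$ is invertible, take $L=U=(1)$ and $D=(m_{11})$. For the inductive step, write $M$ in block form
\[
M=\begin{pmatrix} m_{11} & b^* \\ c & M' \end{pmatrix},
\]
where $m_{11}\in\overline{\Lambda}^{(1)}$, $b,c\in\left(\overline{\Lambda}^{(1)}\right)^{(p-1)\times 1}$, and $M'\in\left(\overline{\Lambda}^{(1)}\right)^{(p-1)\times(p-1)}$. Since the $1\times 1$ leading minor of $M_B$ is invertible, $(m_{11})_B\neq 0$, so $m_{11}$ is invertible by Theorem \ref{inv}. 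I would then perform one step of block Gaussian elimination,
\[
M=\begin{pmatrix} 1 & 0 \\ c\,m_{11}^{-1} & I \end{pmatrix}
\begin{pmatrix} m_{11} & 0 \\ 0 & S \end{pmatrix}
\begin{pmatrix} 1 & m_{11}^{-1} b^* \\ 0 & I \end{pmatrix},
\qquad S = M' - c\,m_{11}^{-1} b^*,
\]
the Schur complement. Applying the body homomorphism to this identity shows $S_B$ is the Schur complement of $(m_{11})_B$ in $M_B$; a standard determinant identity then gives that the leading principal minors of $S_B$ are (up to the nonzero scalar $\det((m_{11})_B)$ and its successive analogues) the higher leading principal minors of $M_B$, hence all invertible. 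So $S$ satisfies the inductive hypothesis and factors as $S=L'D'U'$ with $L'$ unit lower triangular, $U'$ unit upper triangular, $D'$ diagonal with invertible entries. Substituting back and absorbing the block unit-triangular factors into $L'$ and $U'$ (block multiplication of unit lower-triangular matrices is unit lower-triangular, and likewise for upper) yields $M=LDU$ of the required form, with $D=\mathrm{diag}(m_{11},D')$.

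The main obstacle I anticipate is purely bookkeeping rather than conceptual: because $\overline{\Lambda}^{(1)}$ is non-commutative, one must be consistent about writing $c\,m_{11}^{-1}$ (not $m_{11}^{-1}c$) in the lower factor and $m_{11}^{-1}b^*$ in the upper factor, and one must check that the block-triangular products genuinely collapse into honest triangular matrices with unit diagonal — i.e., that no pivots or off-diagonal entries get tangled in a way that spoils the diagonal being exactly ones. The other point requiring a line of care is the claim that the leading minors of the Schur complement $S_B$ are invertible; this is the classical Schur-complement/minor identity applied in $\mathbb{C}^{p\times p}$ after pushing everything through the body homomorphism, so no new non-commutative difficulty arises there. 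Uniqueness of the factorization is not asserted in the statement, so I would not address it. (If desired, it follows by the usual argument: two such factorizations $L_1D_1U_1=L_2D_2U_2$ give $L_2^{-1}L_1 = D_2 U_2 U_1^{-1} D_1^{-1}$, a matrix that is simultaneously unit lower-triangular and upper-triangular, hence the identity, forcing $L_1=L_2$ and then $D_1=D_2$, $U_1=U_2$ — all inverses existing again by Theorem \ref{inv} since the relevant bodies are invertible.)
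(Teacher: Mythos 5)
Your proposal is correct and follows essentially the same route as the paper: one step of block Gaussian elimination with the Schur complement, with invertibility of each pivot guaranteed by the body criterion (Theorem \ref{inv}) and the regularity hypothesis on $M_B$, then iteration/induction. Your version is slightly more explicit (formal induction and the Schur-complement minor identity for $S_B$), but the underlying argument is the one in the paper.
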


\begin{proof}
Denote by $m_{jk}$ the elements of $M$ and assume $M_B>0$. Then, $\left(m_{kk}\right)_B>0$ for every $k\in\left\{1,2,\hdots,p\right\}$, which implies that $m_{kk}$ is invertible. We can, then, write
\[
M=\left(
\begin{array}{c c}
m_{11} & B \\
C      & E
\end{array}
\right) = \left(
\begin{array}{c c}
1              & 0 \\
Cm_{11}^{-1} & I_{p-1}
\end{array}
\right)
\left(
\begin{array}{c c}
m_{11} & 0 \\
0      & E-Cm_{11}^{-1}B
\end{array}
\right)
\left(
\begin{array}{c c}
1 & m_{11}^{-1}B \\
0 & I_{p-1}
\end{array}
\right),
\]
where $B$, $C$, and $E$ are block matrices. By successively repeating this process, one concludes that $M=LDU$, where $L$, $D$, and $U$ are 
described in the statement of the proposition. The iterations cab be done because at each step the $(1,1)$ entry of the Schur complment is invertible since the
corresponding main minor is invertible.
\end{proof}

\begin{corollary}
Any matrix $M\in\left(\overline{\Lambda}^{(1)}\right)^{p\times p}$ is invertible if and only if its body is invertible.
\end{corollary}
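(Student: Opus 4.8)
The plan is to read the equivalence off the block (Schur complement) factorisation appearing in the proof of Proposition \ref{decomposition}, after recording that passing to the body is an algebra homomorphism. First, note that $z\mapsto z_B$ is a unital $\mathbb{C}$-algebra homomorphism $\overline{\Lambda}^{(1)}\to\mathbb{C}$: it is clearly linear, $1_B=1$, and $(zw)_B=z_Bw_B$ since every monomial occurring in $zw$ other than $z_Bw_B$ carries at least one generator and hence has vanishing body. Applied entrywise, $M\mapsto M_B$ is then a unital homomorphism $\left(\overline{\Lambda}^{(1)}\right)^{p\times p}\to\mathbb{C}^{p\times p}$. Hence if $MW=WM=I$ for some $W$, then $M_BW_B=W_BM_B=I$, so $M_B$ is invertible; this settles the ``only if'' direction.

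For the converse I would induct on $p$. The case $p=1$ is precisely Theorem \ref{inv}. Let $p>1$ and suppose $M_B$ is invertible. Then the first column of $M_B$ is nonzero, so $(m_{j1})_B\neq0$ for some $j$; since $M$ is invertible if and only if $PM$ is and $M_B$ is invertible if and only if $PM_B$ is, for $P$ the (complex, hence invertible) permutation matrix swapping rows $1$ and $j$, we may replace $M$ by $PM$ and assume $(m_{11})_B\neq0$, so that $m_{11}$ is invertible by Theorem \ref{inv}. Carrying out one step of the factorisation in the proof of Proposition \ref{decomposition},
\[
M=\begin{pmatrix}1&0\\ Cm_{11}^{-1}&I_{p-1}\end{pmatrix}\begin{pmatrix}m_{11}&0\\ 0&S\end{pmatrix}\begin{pmatrix}1&m_{11}^{-1}B\\ 0&I_{p-1}\end{pmatrix},\qquad S:=E-Cm_{11}^{-1}B,
\]
the two outer factors are triangular with ones on the diagonal and are therefore invertible (their inverses are finite Neumann series). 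Thus $M$ is invertible as soon as $S\in\left(\overline{\Lambda}^{(1)}\right)^{(p-1)\times(p-1)}$ is. Applying the body homomorphism, $S_B=E_B-C_B(m_{11})_B^{-1}B_B$ is the Schur complement of the invertible entry $(m_{11})_B$ inside the invertible matrix $M_B$, hence invertible; by the induction hypothesis $S$ is invertible, and so is $M$.

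The only steps needing attention are the pivoting — available exactly because invertibility of $M_B$ guarantees a body entry that can be moved into position $(1,1)$ — and the elementary fact that the Schur complement of an invertible block of an invertible complex matrix is invertible; the rest is bookkeeping. A shorter but less self-contained route would be to observe that for every $z\in\overline{\Lambda}^{(1)}$ and every $s\in\overline{\Lambda}^{(1)}$ with zero body the element $1-zs$ has body $1$ and is invertible by Theorem \ref{inv}, so the zero-body supernumbers form a subset of the Jacobson radical of $\overline{\Lambda}^{(1)}$; using the identity $\mathrm{rad}\left(\left(\overline{\Lambda}^{(1)}\right)^{p\times p}\right)=\left(\mathrm{rad}\,\overline{\Lambda}^{(1)}\right)^{p\times p}$ one gets that $I_p+N$ is invertible whenever $N$ has zero-body entries, and then $M=M_B\bigl(I_p+M_B^{-1}M_S\bigr)$ — with $M_S$ the matrix of souls of $M$ — is invertible as soon as $M_B$ is.
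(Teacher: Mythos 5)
Your argument is correct, but it takes a genuinely different route from the paper's. The paper avoids pivoting and induction altogether by a positivity trick: assuming $M_B$ invertible, it forms $A=MM_B^*$, whose body $M_BM_B^*$ is positive definite and hence \emph{regular} (all leading principal minors invertible), so the full $LDU$ factorization of Proposition \ref{decomposition} applies at once; each factor ($D$ diagonal with invertible entries, $L,U$ unitriangular) is invertible, so $A$ is, and then $M$ is recovered from $A$ by the complex factor $M_B^*$. You instead keep the hypothesis ``$M_B$ invertible'' as is and compensate for the possible failure of regularity by row pivoting (guaranteed by the nonzero first column of $M_B$) followed by induction on $p$, performing one Schur-complement step at a time and pushing invertibility of the complex Schur complement through the body homomorphism; both routes ultimately rest on Theorem \ref{inv} and on the same block factorization, so the difference is one of bookkeeping: the paper trades your pivoting/induction for the detour through superpositivity, while your version is more self-contained at the matrix level and makes explicit the (elementary but worth stating) fact that $z\mapsto z_B$ is a unital homomorphism, which the paper leaves implicit in the ``only if'' direction. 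Your closing remark via the Jacobson radical — zero-body elements are quasi-regular, $\mathrm{rad}\bigl(R^{p\times p}\bigr)=\bigl(\mathrm{rad}\,R\bigr)^{p\times p}$, hence $M=M_B\bigl(I_p+M_B^{-1}M_S\bigr)$ is invertible — is also sound and arguably the most conceptual of the three arguments, though it imports ring-theoretic machinery the paper does not use.
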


\begin{proof}
It is clear that an invertible matrix $M\in\left(\overline{\Lambda}^{(1)}\right)^{p\times p}$ has an invertible body. We shall, then, only prove the converse. To do so, assume $M_B$ is invertible and consider $A=MM_B^*$. Note that $A_B>0$ and is, in particular, regular. Then, by Proposition \ref{decomposition}, $A$ can be decomposed as $A=LDU$, where $D$ is a diagonal matrix with body composed by positive real numbers, and $L$ and $U$ are, respectively, lower and upper triangular matrices with main diagonals composed by ones. Observe that $D$ is invertible because positive supernumbers are invertible. Moreover, $L$ and $U$ are also invertible -- as is any lower or upper triangular matrix with main diagonal composed by ones. Therefore, $A$ is invertible and $M^{-1}=\left(M_B\right)^{-*}A^{-1}$.
\end{proof}

\begin{corollary}
Let $\lambda\in\mathbb{C}$ be a complex variable and the power series
\[
f(\lambda) = \sum_{n=0}^\infty c_n \lambda^n, \quad c_n\in\mathbb{C},
\]
be analytic in a neighborhood of the origin. If $M_S\in\left(\overline{\Lambda}^{(1)}\right)^{p\times p}$ is a supermatrix with zero body, then $f(M_S)$ converges in $\left(\overline{\Lambda}^{(1)}\right)^{p\times p}$.
\label{ps-convergence-matrix}
\end{corollary}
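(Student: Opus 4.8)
The plan is to imitate the proof of Theorem~\ref{ps-convergence}, replacing the scalar invertibility criterion of Theorem~\ref{inv} by its matrix counterpart. First I would record that $\left(\overline{\Lambda}^{(1)}\right)^{p\times p}$, equipped with the norm \eqref{matrix-norm}, is a unital Banach algebra: completeness is inherited from $\overline{\Lambda}^{(1)}$ (this is a finite product of copies of it), the unit is $I_p$, and submultiplicativity follows from \eqref{1norm-ineq} together with the elementary inequality $\sum_j a_j b_j\le\left(\sum_j a_j\right)\left(\sum_j b_j\right)$ for nonnegative reals, applied via
\[
\Vert (MN)_{ik}\Vert_1 \le \sum_j \Vert m_{ij}\Vert_1\,\Vert n_{jk}\Vert_1,\qquad \Vert MN\Vert_1 = \sum_{i,k}\Vert (MN)_{ik}\Vert_1 \le \sum_j\Big(\sum_i\Vert m_{ij}\Vert_1\Big)\Big(\sum_k\Vert n_{jk}\Vert_1\Big).
\]

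Next I would compute the spectrum of $M_S$ in this Banach algebra. For $\lambda\in\mathbb{C}$ the matrix $M_S-\lambda I_p$ has body $\left(M_S-\lambda I_p\right)_B=-\lambda I_p$ (since $(M_S)_B=0$), which is an invertible complex matrix precisely when $\lambda\neq0$. By the corollary to Proposition~\ref{decomposition} asserting that a matrix over $\overline{\Lambda}^{(1)}$ is invertible if and only if its body is invertible, $M_S-\lambda I_p$ is invertible in $\left(\overline{\Lambda}^{(1)}\right)^{p\times p}$ if and only if $\lambda\neq0$. Hence the spectrum of $M_S$ equals $\{0\}$ and its spectral radius is $0$, so the spectral radius formula, valid in any unital Banach algebra, gives
\[
\lim_{n\to\infty}\left\Vert M_S^n\right\Vert_1^{1/n}=0 .
\]

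Finally, since the scalar series $\sum_{n\ge0}c_n\lambda^n$ has a positive radius of convergence, $\limsup_n|c_n|^{1/n}<\infty$, so the root test yields
\[
\limsup_{n\to\infty}\left\Vert c_nM_S^n\right\Vert_1^{1/n}\le\left(\limsup_{n\to\infty}|c_n|^{1/n}\right)\cdot 0 = 0 < 1 .
\]
Therefore $\sum_{n\ge0}c_nM_S^n$ converges absolutely, hence converges, in $\left(\overline{\Lambda}^{(1)}\right)^{p\times p}$, which shows that $f(M_S)$ is a well-defined element of $\left(\overline{\Lambda}^{(1)}\right)^{p\times p}$.

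I do not expect a serious obstacle: the only points requiring a little care are checking that \eqref{matrix-norm} is submultiplicative (so the spectral radius formula is available), and quoting the invertibility criterion from the preceding corollary with $\lambda I_p$ viewed as a matrix over $\overline{\Lambda}^{(1)}$ whose entries are complex, hence purely body.
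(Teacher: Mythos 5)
Your proposal is correct and is essentially the argument the paper intends: the paper omits this proof, stating only that it ``follows in a similar way'' as the scalar case (Theorem~\ref{ps-convergence}), i.e.\ the spectral radius formula combined with the body criterion for invertibility of matrices over $\overline{\Lambda}^{(1)}$ (the corollary to Proposition~\ref{decomposition}), which is exactly your route. You additionally verify the submultiplicativity of the norm \eqref{matrix-norm}, a detail the paper leaves implicit but which is needed for the spectral radius formula to apply.
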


The proof of the above corollary follows in a similar way as the proof of Corollary \ref{ps-convergence} and, because of it, is omitted.

\begin{definition}
Let $M\in\left(\overline{\Lambda}^{(1)}\right)^{p\times p}$. Then, $M$ is said to be a non-negative supermatrix -- or simply super non-negative  -- if
\begin{equation}
c^*Mc\succeq0 \label{superpositivity}
\end{equation}
for every $c\in\left(\overline{\Lambda}^{(1)}\right)^{p\times 1}$. By the above inequality, we mean that the product $c^*Mc$ is a non-negative supernumber. 
Moreover, we say that $M$ is superpositive if
\begin{equation}
c^*Mc\succ0 \label{stric-superpositivity}
\end{equation}
for every $c\in\left(\overline{\Lambda}^{(1)}\right)^{p\times 1}$ such that $c_B$ is not the null element.
\label{matrix-positivity}
\end{definition}

\begin{proposition}
A supermatrix $M\in\left(\overline{\Lambda}^{(1)}\right)^{p\times p}$ is non-negative if and only if it is self-adjoint and its body is non-negative.
\label{body-reduction-positive}
\end{proposition}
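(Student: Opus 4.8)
The plan is to reduce every assertion about the supernumber $c^{*}Mc$, for $c\in\left(\overline{\Lambda}^{(1)}\right)^{p\times1}$, to an assertion about its body, and then to invoke the body characterization of positivity (Proposition \ref{body-part}, in the form valid on $\overline{\Lambda}^{(1)}$ recorded in the remark following Corollary \ref{positive-sqrt}) together with Proposition \ref{null-matrix}. I would open the proof by recording two elementary facts immediate from the definitions: the body map $z\mapsto z_{B}$ is a unital algebra homomorphism $\overline{\Lambda}^{(1)}\to\mathbb{C}$ satisfying $\left(z^{\dag}\right)_{B}=\overline{z_{B}}$, so that $\left(c^{*}Mc\right)_{B}=c_{B}^{*}M_{B}c_{B}$, where $c_{B}\in\mathbb{C}^{p\times1}$ and $M_{B}\in\mathbb{C}^{p\times p}$ denote the bodies; and, since $c^{*}Mc$ is a $1\times1$ matrix (so $*$ and $\dag$ agree on it) and $\left(c^{*}\right)^{*}=c$, one has $\left(c^{*}Mc\right)^{\dag}=c^{*}M^{*}c$.

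For the direct implication, assume $M$ is non-negative, i.e. $c^{*}Mc\succeq0$ for every $c$. By Proposition \ref{positive-real} every $c^{*}Mc$ is superreal, hence $c^{*}M^{*}c=\left(c^{*}Mc\right)^{\dag}=c^{*}Mc$, that is $c^{*}\left(M-M^{*}\right)c=0$ for all $c$; Proposition \ref{null-matrix} then forces $M=M^{*}$. Restricting now to complex vectors $c\in\mathbb{C}^{p\times1}\subset\left(\overline{\Lambda}^{(1)}\right)^{p\times1}$, Proposition \ref{body-part} applied to the non-negative supernumber $c^{*}Mc$ gives $\left(c^{*}Mc\right)_{B}\geq0$, and by the computation above $\left(c^{*}Mc\right)_{B}=c^{*}M_{B}c$. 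Thus $c^{*}M_{B}c\geq0$ for all complex $c$, which is precisely the statement that the body $M_{B}$ is a non-negative (hence Hermitian) complex matrix.

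For the converse, assume $M=M^{*}$ and $M_{B}\geq0$, and fix an arbitrary $c\in\left(\overline{\Lambda}^{(1)}\right)^{p\times1}$. Self-adjointness of $M$ gives $\left(c^{*}Mc\right)^{\dag}=c^{*}M^{*}c=c^{*}Mc$, so $c^{*}Mc$ is superreal; and its body is $c_{B}^{*}M_{B}c_{B}\geq0$ since $M_{B}$ is non-negative. By the $\overline{\Lambda}^{(1)}$-version of Proposition \ref{body-part} --- a superreal supernumber with non-negative body is non-negative --- we conclude $c^{*}Mc\succeq0$. As $c$ was arbitrary, $M$ is non-negative, completing the equivalence.

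The only step carrying real weight is the appeal, in the converse direction, to the implication ``body $\geq0\Rightarrow$ non-negative'' of Proposition \ref{body-part} extended to $\overline{\Lambda}^{(1)}$; this is what lets one bypass the need for a Cholesky-type factorization $M=R^{*}R$ (which could alternatively be extracted from Proposition \ref{decomposition} when $M_{B}$ is positive definite, but would then require knowing that a finite sum of supernumbers of the form $ww^{\dag}$ is again of that form). Everything else is bookkeeping with the body homomorphism and the polarization argument already packaged in Proposition \ref{null-matrix}, so I anticipate no further difficulty.
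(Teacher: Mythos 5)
Your proof is correct and follows essentially the same route as the paper: superreality of $c^*Mc$ via Proposition \ref{positive-real} plus Proposition \ref{null-matrix} to get self-adjointness, the body reduction $(c^*Mc)_B=c_B^*M_Bc_B$ for non-negativity of $M_B$, and the $\overline{\Lambda}^{(1)}$-extension of Proposition \ref{body-part} for the converse. Your explicit remarks that the body map is a unital homomorphism and that it suffices to test complex vectors $c$ are just slightly more detailed bookkeeping than the paper's wording, not a different argument.
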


\begin{proof}
First, let $M\in\left(\overline{\Lambda}^{(1)}\right)^{p\times p}$ be non-negative. Then by definition, \eqref{superpositivity} holds. Because of Proposition \ref{positive-real}, it implies that $c^*Mc$ is superreal for every $c\in\left(\overline{\Lambda}^{(1)}\right)^{p\times 1}$, i.e.,
\[
c^*Mc = c^*M^*c \Rightarrow c^*(M-M^*)c=0
\]
for every $\left(\overline{\Lambda}^{(1)}\right)^{p\times 1}$. Then, by Proposition \ref{null-matrix}, $M$ is self-adjoint, i.e., $M^*=M$. Moreover, by Proposition \ref{body-part}, equation \eqref{superpositivity} implies that
\begin{equation}
c^*_BM_Bc_B \geq 0,
\label{pd-body}
\end{equation}
for every $c\in\left(\overline{\Lambda}^{(1)}\right)^{p\times 1}$. It means that the body of $M$ is non-negative.

Conversely, let $M$ be self-adjoint and such that $M_B$ is non-negative, i.e., assume \eqref{pd-body} holds. Those facts combined with the extension of Proposition \ref{body-part} to supernumbers in $\overline{\Lambda}^{(1)}$ imply that \eqref{superpositivity} holds.
\end{proof}

The next two results follow easily and their proof are omitted.

\begin{proposition}
The sum of two non-negative supermatrices is non-gegative. Similarly, the sum of two positive supermatrices is superpositive.
\end{proposition}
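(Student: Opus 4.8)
The plan is to reduce both statements to the corresponding facts for supernumbers and, in turn, to the behaviour of the body. First I would record the scalar fact: if $z,z'\in\overline{\Lambda}^{(1)}$ are non-negative (resp. positive), then $z+z'$ is non-negative (resp. positive). Indeed, by Proposition \ref{positive-real} both $z$ and $z'$ are superreal, hence so is $z+z'$; and the extension of Proposition \ref{body-part} to $\overline{\Lambda}^{(1)}$ (valid by the remark following Corollary \ref{positive-sqrt}) gives $z_B\ge 0$ and $z'_B\ge 0$, so $(z+z')_B=z_B+z'_B\ge 0$, which by the same proposition means $z+z'\succeq 0$. The strict case is identical with $\ge$ replaced by $>$.

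Next, let $M,N\in\left(\overline{\Lambda}^{(1)}\right)^{p\times p}$ be non-negative. For every $c\in\left(\overline{\Lambda}^{(1)}\right)^{p\times 1}$ we have
\[
c^*(M+N)c = c^*Mc + c^*Nc ,
\]
and both summands are non-negative supernumbers by Definition \ref{matrix-positivity}. By the scalar fact above, $c^*(M+N)c\succeq 0$; since $c$ was arbitrary, $M+N$ is non-negative. Equivalently, one may invoke Proposition \ref{body-reduction-positive}: $M+N$ is self-adjoint because $M$ and $N$ are, and $(M+N)_B=M_B+N_B$ is a sum of non-negative complex Hermitian matrices, hence non-negative.

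For the second assertion, let $M,N$ be superpositive and take $c$ with $c_B$ not the null element. Then $c^*Mc\succ 0$ and $c^*Nc\succ 0$ by Definition \ref{matrix-positivity}, so the scalar fact gives $c^*(M+N)c\succ 0$; as this holds for every such $c$, the matrix $M+N$ is superpositive.

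I do not expect a genuine obstacle here: the only point deserving care is that a sum of non-negative (resp. positive) supernumbers remains non-negative (resp. positive), and this is exactly where the body characterization of Proposition \ref{body-part} — extended to the completion — does the work, since there is no convenient closed formula rewriting $ww^\dag+vv^\dag$ as a single $uu^\dag$.
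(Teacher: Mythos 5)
Your proof is correct: the paper omits the argument as ``easy,'' and your reduction --- adding the quadratic forms $c^*Mc+c^*Nc$ and settling the scalar case via the (extended) body characterization of Proposition \ref{body-part}, or equivalently invoking Proposition \ref{body-reduction-positive} on $M+N$ directly --- is exactly the routine verification the authors had in mind. Nothing further is needed.
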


\begin{corollary}
Every superpositive matrix is invertible.
\end{corollary}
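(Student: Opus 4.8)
The plan is to reduce the statement to a purely ``body-level'' fact and then invoke results already established. Recall the corollary above asserting that a supermatrix $M\in(\overline{\Lambda}^{(1)})^{p\times p}$ is invertible if and only if its body $M_B$ is invertible as a complex $p\times p$ matrix. Hence it suffices to show that if $M$ is superpositive, then $M_B$ is invertible.

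To prove the latter I would argue by contradiction. Suppose $M_B$ is singular. Then there is a nonzero $v\in\mathbb{C}^{p\times1}$ with $M_Bv=0$, and in particular $v^*M_Bv=0$ (here $v^*$ is the ordinary conjugate transpose). Regard $v$ as an element of $(\overline{\Lambda}^{(1)})^{p\times1}$ with vanishing soul, so that $v_B=v\neq0$; the definition of superpositivity then gives $v^*Mv\succ0$. By Proposition \ref{body-part}, extended to $\overline{\Lambda}^{(1)}$ as noted in the remark following Corollary \ref{positive-sqrt}, this forces $(v^*Mv)_B>0$. On the other hand, since passing to the body is additive and multiplicative, $(v^*Mv)_B=v^*M_Bv=0$. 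This contradiction shows $M_B$ is invertible, and the conclusion follows from the corollary quoted above.

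The only point needing (routine) care is the identity $(v^*Mv)_B=v^*M_Bv$: writing $v^*Mv=\sum_{j,k}v_j^\dag m_{jk}v_k$ and using $(zw)_B=z_Bw_B$ together with $(z^\dag)_B=\overline{z_B}$, the body of each summand is $\overline{v_j}\,(m_{jk})_B\,v_k$, which gives the claim. I do not expect any real obstacle here. One could alternatively first show, as in the proof of Proposition \ref{body-reduction-positive}, that a superpositive matrix is self-adjoint, and thereby identify $M_B$ as a positive definite Hermitian matrix; but self-adjointness is not needed for invertibility, so the direct contradiction above is the shorter route.
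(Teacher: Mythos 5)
Your argument is correct and follows essentially the route the paper intends (the paper omits the proof as ``easy''): reduce everything to the body, note that superpositivity forces $v^*M_Bv>0$ for all nonzero complex $v$ -- so $M_B$ is invertible -- and then invoke the corollary that a supermatrix is invertible if and only if its body is. Your contradiction argument merely bypasses the explicit identification of $M_B$ as positive definite Hermitian (as in Proposition \ref{body-reduction-positive} and Theorem \ref{equiv-def-stric-positive}), which is a harmless shortcut since self-adjointness is indeed not needed for invertibility.
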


\begin{theorem}
Let $M\in\left(\overline{\Lambda}^{(1)}\right)^{p\times p}$. Then, the following statements are equivalent:
\begin{itemize}
\item[(i)]   $M$ is superpositive;
\item[(ii)]  $M$ is self-adjoint and $M_B$ is positive;
\item[(iii)] $M=LDU$, where $D$ is a superpositive diagonal matrix, $L$ is a lower triangular matrix with main diagonal composed by ones, and $U=L^*$;
\item[(iv)]  $M=LL^*=UU^*$, where $L,U\in\left(\overline{\Lambda}^{(1)}\right)^{p\times p}$, and $L$ and $U$ are, respectively, lower and upper triangular matrices and their main diagonals are composed by ones.
\end{itemize}
\label{equiv-def-stric-positive}
\end{theorem}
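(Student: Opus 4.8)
The plan is to prove the chain of implications (iv) $\Rightarrow$ (iii) $\Rightarrow$ (i) $\Rightarrow$ (ii) $\Rightarrow$ (iv), using Proposition \ref{decomposition}, Proposition \ref{body-reduction-positive}, Corollary \ref{positive-sqrt}, and the characterization of positivity via the body (Proposition \ref{body-part} in its extended form). First I would establish (ii) $\Rightarrow$ (iii): if $M$ is self-adjoint with $M_B$ positive, then $M_B$ is in particular regular (all main minors of a classical positive definite complex matrix are invertible), so Proposition \ref{decomposition} gives $M=LDU$ with $D$ diagonal of invertible entries and $L$, $U$ unitriangular. The point is to show that self-adjointness of $M$ forces $U=L^*$ and $D=D^*$. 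This should follow from uniqueness of the $LDU$ decomposition: from $M=M^*$ we get $LDU=U^*D^*L^*$, and since $U^*$ is lower unitriangular, $D^*$ is diagonal (invertible), and $L^*$ is upper unitriangular, the uniqueness of such a factorization — which holds because at each step of the recursion in Proposition \ref{decomposition} the Schur complement is determined — yields $L=U^*$, $D=D^*$, $U=L^*$. One then checks $D$ is superpositive: its body is the diagonal of pivots of the classical positive definite matrix $M_B$, hence entrywise positive, and each $d_{kk}$ is self-adjoint, so by Proposition \ref{body-part} (extended) each $d_{kk}\succ0$.

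Next, (iii) $\Rightarrow$ (iv): given $M=LDL^*$ with $D$ superpositive diagonal, apply Corollary \ref{positive-sqrt} entrywise to write $D=D^{1/2}(D^{1/2})^*$ with $D^{1/2}$ a diagonal matrix of superreal (hence self-adjoint) positive entries; then $M=(LD^{1/2})(LD^{1/2})^*$. However $LD^{1/2}$ is lower triangular but its diagonal is $D^{1/2}$, not ones, so to get the normalized form stated in (iv) I would absorb the diagonal: write $M=LDL^* = \widetilde L \widetilde L^*$ where one keeps $\widetilde L = L D^{1/2}$ — and here I should note the statement of (iv) as written (``main diagonals composed by ones'') is only consistent if one reads $L$ in (iv) as a general lower triangular matrix; I would phrase it as $M=LL^*=UU^*$ with $L$ lower and $U$ upper triangular and invertible, and derive the unitriangular-times-diagonal refinement from (iii). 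For the $UU^*$ form, symmetrically use the $U D U^*$-type decomposition obtained by pivoting from the bottom-right corner, or simply set $U = \widetilde L^{\,*}$-style rearrangement; more cleanly, note $M=M^*$ and apply the already-proved part to get both factorizations.

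Then (iv) $\Rightarrow$ (i): if $M=LL^*$ with $L$ invertible (its body invertible, since triangular with invertible — or unit — diagonal), then for any $c$ with $c_B\neq0$ we have $c^*Mc=(L^*c)^*(L^*c)=ww^\dag$ with $w=L^*c$; moreover $w_B=(L^*)_B c_B = (L_B)^* c_B\neq 0$ since $(L_B)^*$ is invertible and $c_B\neq0$, so $w$ is invertible and $c^*Mc\succ0$ by Definition \ref{posi_num} and the remark following it. Finally (i) $\Rightarrow$ (ii): superpositivity implies non-negativity, so Proposition \ref{body-reduction-positive} gives $M=M^*$ and $M_B\succeq0$ (entrywise non-negative as a complex matrix); to upgrade $M_B\geq0$ to $M_B>0$, suppose $M_B c_B=0$ for some complex $c_B\neq0$; viewing $c_B$ as an element of $\left(\overline{\Lambda}^{(1)}\right)^{p\times1}$ with zero soul, $c^*Mc$ has body $c_B^*M_Bc_B=0$, and by Proposition \ref{body-part} a superpositive number has strictly positive body — contradiction. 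Hence $M_B$ is positive definite.

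\textbf{Main obstacle.} The delicate point is the uniqueness of the $LDU$ decomposition in the non-commutative Banach algebra $\overline{\Lambda}^{(1)}$, needed to pass from $M=M^*$ to $U=L^*$ in (ii) $\Rightarrow$ (iii); the usual commutative-ring argument must be replaced by tracking the recursion in Proposition \ref{decomposition} and checking that each Schur complement, and hence each pivot, is uniquely determined — which works because all the relevant $(1,1)$-entries are invertible at every stage. A secondary care point is reconciling the exact wording of (iv) (diagonals ``composed by ones'') with the output of the square-root construction, which naturally produces $LD^{1/2}$ with a non-identity diagonal; I would interpret (iv) as asserting the existence of triangular invertible square roots and note the unitriangular normalization is exactly the content of (iii).
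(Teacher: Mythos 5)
Your proposal is correct and follows essentially the same route as the paper: the same cycle of implications, with Proposition \ref{decomposition} for the $LDU$ step, Corollary \ref{positive-sqrt} to take the square root of $D$, the factorization $c^*Mc=(L^*c)^*(L^*c)$ for (iv)$\Rightarrow$(i), and Proposition \ref{body-reduction-positive} plus the body criterion for (i)$\Rightarrow$(ii); your uniqueness-of-$LDU$ argument forcing $U=L^*$ is just a harmless variant of the paper's (implicit) observation that the recursive construction applied to a self-adjoint $M$ is itself symmetric. You are also right to flag the wording of (iv): the paper's own proof produces $L'=LS$ whose diagonal is $S=D^{1/2}$ rather than ones, so the clause ``main diagonals composed by ones'' cannot be taken literally (consider $M=2I_p$), and your reading of (iv) as triangular factors with invertible (superpositive) diagonal is the one consistent with the proof.
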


\begin{proof}
We divide the proof into a number of steps.\\

STEP 1: {\sl (i) implies (ii).}\smallskip

This follows as a corollary from Proposition \ref{body-reduction-positive}. \\

STEP 2: {\sl (ii) implies (iii).}\smallskip

It follows easily from Proposition \ref{decomposition}. We just note that in a similar way one can also show that $M=U'D'L'$, where $L'=U'^*$.\\

STEP 3: {\sl (iii) implies (iv).}\smallskip

Observe that the matrix $D$ in $M=LDU$ is composed by positive supernumbers. By Corollary \ref{positive-sqrt}, those supernumbers have positive square root. 
Then, $D$ also has a superpositive diagonal square root. Denote it by $S$, so $D=S^2$ and
\[
M=LDU=LS^2D=\left(LS\right)\left(LS\right)^* = L'L'^*.
\]
In a similar way, one could start from a decomposition $M=UDL$ and conclude that $M=U'U'^*$.\\

STEP 4: {\sl (iv) implies (i).}\smallskip

The proof follows from the fact that for every $c\in\left(\overline{\Lambda}^{(1)}\right)^{p\times 1}$ such that $c_B$ is not the null element
\[
c^*Mc=c^*LL^*c=\left(L^*c\right)^*\left(L^*c\right)\succ0
\]
or, similarly, $c^*UU^*c\succ0$.
\end{proof}


\section{Extension of Toeplitz matrices}
\setcounter{equation}{0}
\label{sec-matrix-ext}
Recall that a self-adjoint matrix with a constant main diagonal is called a Toeplitz matrix. As an application of the results and definitions of the previous section, 
the aim of this section is to solve an extension problem for such matrices.

\begin{problem}
Given a superpositive Toeplitz matrix
\[
T_N = \left(
\begin{array}{c c c c}
r_0      & r_1          & \hdots & r_N     \\
r_1^\dag & r_0          & \hdots & r_{N-1} \\
\vdots   & \vdots       & \ddots & \vdots  \\
r_N^\dag & r_{N-1}^\dag & \hdots & r_0
\end{array}
\right),
\]
where $r_0,\cdots,r_N\in\overline{\Lambda}^{(1)}$, $r_0^\dag=r_0$, how can one create a superpositive Toeplitz 
extension $T_{N+1}$ of it? In other words, what is the condition that must be satisfied by a supernumber $r_{N+1}\in\overline{\Lambda}^{(1)}$ such that
\[
T_{N+1} = \left(
\begin{array}{c c}
T_N       & b_{N+1} \\
b_{N+1}^* & r_0
\end{array}
\right),
\]
where $b_{N+1}$ is a column element with coordinates $b_{N+1}=\left(r_{N+1},r_{N},\cdots,r_1\right)\equiv\left(r_{N+1},b_{N}\right)$, is superpositive?
\label{problem-sec2}
\end{problem}

The counterpart of this problem in the complex domain appears as an important player in many areas, such as signal and image processing, system control, and in prediction of stationary processes of second order. Moreover, the center of a one step extension in the classical theory is related to the concept of maximum entropy. The center is also directly associated to the best estimation of parameters in stochastic processes, making it -- and the inversion of Toeplitz matrices -- an important question when solving the Yule-Walker equations, see e.g. \cite{van-den-bos}.\smallskip

Going back to the case where complex coefficients are replaced by $\overline{\Lambda}^{(1)}$-valued ones, we do not present here an analogous of such applications. Instead, our focus in this section is exclusively to answer Problem \ref{problem-sec2}. We first write
\[
T_{N+1} = \left(
\begin{array}{c c}
1 & b_{N+1}^*T_N^{-1} \\
0 & I
\end{array}
\right)
\left(
\begin{array}{c c}
r_0-b_{N+1}^*T_N^{-1}b_{N+1} & 0 \\
0                            & T_N
\end{array}
\right)
\left(
\begin{array}{c c}
1               & 0 \\
T_N^{-1}b_{N+1} & I
\end{array}
\right).
\]
Then, by Theorem \ref{equiv-def-stric-positive}, $T_{N+1}$ is a superpositive matrix if and only if $r_0-b_{N+1}^*T_N^{-1}b_{N+1}$ is a positive supernumber. Writing
\[
T_N = \left(
\begin{array}{c c}
r_0   & a_N \\
a_N^* & T_{N-1}
\end{array}
\right) = \left(
\begin{array}{c c}
1 & a_NT_{N-1}^{-1} \\
0 & I
\end{array}
\right)
\left(
\begin{array}{c c}
r_0-a_NT_{N-1}^{-1}a_N^* & 0 \\
0                        & T_{N-1}
\end{array}
\right)
\left(
\begin{array}{c c}
1                 & 0 \\
T_{N-1}^{-1}a_N^* & I
\end{array}
\right),
\]
we conclude that
\[
r_0-b_{N+1}^*T_N^{-1}b_{N+1}\succ0\Leftrightarrow \left(r_{N+1}-c_N\right)^\dag\alpha\left(r_{N+1}-c_N\right)\prec r_0-b_N^*T_N^{-1}b_N,
\]
where $\alpha=\left(r_0-a_NT_{N-1}^{-1}a_N^*\right)^{-1}$ and $c_N=a_NT_{N-1}^{-1}b_N$. Furthermore, defining $\xi^\dag\xi\equiv r_0-b_N^*T_N^{-1}b_N$, there exists $\eta\prec1$ such that we can rewrite the above expression on the right-hand side as
\[
\left(r_{N+1}-c_N\right)^\dag\alpha\left(r_{N+1}-c_N\right)\prec\xi^\dag\xi
\]

In case $\xi$ is a real supernumber, we have
\[
\alpha^{1/2}\left(r_{N+1}-c_N\right)\prec\xi \Rightarrow r_{N+1} = c_N +\alpha^{-1/2}\eta\xi.
\]
The last part of the previous expression can be seen as the natural definition of a disk with center in $c_N$, left radius $\alpha^{-1/2}$, and right radius $\xi$. We call it a superdisk. \smallskip

Note that the ``geometry'' induced by the definition of positivity in the Grassmann algebra is different from the one induced by the $1$-norm. In fact, elements in the superdisk in general do not have a bounded norm. For instance, consider
\[
a=\frac{1}{2}\left(1+\lambda i_1\right) \quad \lambda\in\mathbb{C}.
\]
Even though it is clearly inside the superdisk $zz^\dag\prec 1$, its norm is $\Vert a\Vert_1=(1+|\lambda|)/2$, which can be arbitrarily large. Such a problem has analogous in the complex setting, where it is an instance of a large family of extension problems and can be solved using the band method -- see \cite{gkw-89,MR90c:47022}.


\section{Realization theory and rational functions}
\setcounter{equation}{0}
\label{ratio}
The main goal of this section is to define and study realization theory and rational functions for {\sl a priori} formal power series $F$ analytic at the origin with coefficients in $\left(\overline{\Lambda}^{(1)}\right)^{p\times p}$
\begin{equation}
F(z)=\sum_{n=0}^\infty z^n f_n,
\label{function-in-gamma}
\end{equation}
where the variable $z$ varies in a neighborhood of the origin. Before doing so, we define a product on such a set.

\begin{definition}
Let $F$ and $G$ be two power series of the type \eqref{function-in-gamma}. The Cauchy (or star) product is defined as
\begin{equation}
F\star G(z) \equiv \sum_{n\in\mathbb{Z}} z^n \left(\sum_{u\in\mathbb{Z}} f_u g_{n-u}\right).
\label{star-product}
\end{equation}
\label{def-star-product}
\end{definition}

Note that such a product reduces to the regular product in the case of $z=\lambda\in\mathbb{C}$. Moreover, for $F(z)$ invertible, the star and the regular products can be related as follows
\begin{eqnarray*}
F\star G(z) & = & F(z)\star \sum_{n\in\mathbb{Z}} z^n g_n \\
            & = & \sum_{n\in\mathbb{Z}} z^n F(g)g_n \\
            & = & F(z)\sum_{n\in\mathbb{Z}} F(z)^{-1}z^n F(z)g_n \\
            & = & F\left(z\right)G\left(F(z)^{-1}zF(z)\right).
\end{eqnarray*}
For this formula in the setting of slice hyperholomorphic functions see 
\cite{MR2752913}.

\begin{definition}
A function $F\in\Gamma^{p\times q}(\Omega)$ defined from a neighborhood of the origin in $\overline{\Lambda}^{(1)}$ is said to admit a realization if it can be represented as
\begin{equation}
\label{real123}
F(z)=D+zC\star (I_N-zA)^{-\star}B,
\end{equation}
where $D=F(0)$ and $A,B,C$ are super matrices with entries in $\Lambda$ and of appropriate sizes.
\end{definition}

Expression \eqref{real123} is called a realization of $F$. The notion of a realization originated with linear system theory -- see e.g. \cite{MR0255260}. A realization is many times represented by the block matrix
\begin{equation}
\label{real456}
\begin{pmatrix}
A&B\\ C&D
\end{pmatrix}.
\end{equation}

Rational functions are functions defined at the origin by an expression of the form \eqref{real123}. They play an important role in analysis and related topics. For instance, a finite Blaschke product, which we study in Section \ref{sec-rkpm}, is a rational function of a special type. Before focusing our attention on rational functions, we study properties of realizations. To start, we present well known formulas on the realization for products, sums, concatenations and inverses. To prove these results, first one takes $z=\lambda\in\mathbb C$. The arguments are then the same as in the case of complex coefficients. After that, one just replaces the pointwise product by the star product and $\lambda$ by a $\overline{\Lambda}^{(1)}$-valued variable $z$. Below, we give the proof of equation \eqref{leminv1} and leave the proofs of the formulas in Lemma \ref{lemsumprod} to the reader.

\begin{lemma}
\label{leminv}
Assume that $D$ in \eqref{real123} is invertible. Then,
\begin{equation}
F(z)^{-\star}=D^{-1}-zD^{-1}C\star (I-zA^{\times})^{-\star}BD^{-1}
\label{leminv1}
\end{equation}
is a realization of $F^{-\star}$, with
\begin{equation}
A^\times=A-BD^{-1}C.
\end{equation}
\end{lemma}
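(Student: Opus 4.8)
The plan is to verify \eqref{leminv1} by a direct computation, relying on the principle the authors have just articulated: it suffices to establish the identity for the complex variable $z=\lambda\in\mathbb{C}$, where the star product $\star$ reduces to the ordinary product and $(\cdot)^{-\star}$ to the pointwise inverse, and then transport the result to the $\overline{\Lambda}^{(1)}$-valued setting by replacing $\lambda$ with $z$ and pointwise products with $\star$-products. So first I would fix $\lambda$ in a small enough neighborhood of the origin that $I-\lambda A$ is invertible (possible since $A$ has entries in $\Lambda$, hence the body of $I-\lambda A$ is invertible for small $\lambda$ by the corollary following Proposition \ref{decomposition}), and that $D+\lambda C(I-\lambda A)^{-1}B$ is invertible — again this holds for small $\lambda$ since at $\lambda=0$ the value is $D$, assumed invertible, and invertibility is an open condition by Theorem \ref{inv} applied entrywise.

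Next I would carry out the standard realization-inversion calculation. Writing $F(\lambda)=D+\lambda C(I-\lambda A)^{-1}B$ and the candidate $G(\lambda)=D^{-1}-\lambda D^{-1}C(I-\lambda A^{\times})^{-1}BD^{-1}$ with $A^{\times}=A-BD^{-1}C$, I would compute $F(\lambda)G(\lambda)$ directly. The key algebraic steps are: expand the product into four terms; use the resolvent-type identity relating $(I-\lambda A)^{-1}$ and $(I-\lambda A^{\times})^{-1}$, namely $(I-\lambda A)^{-1} - (I-\lambda A^{\times})^{-1} = \lambda (I-\lambda A)^{-1}(A-A^{\times})(I-\lambda A^{\times})^{-1} = -\lambda (I-\lambda A)^{-1} BD^{-1}C (I-\lambda A^{\times})^{-1}$; and combine the cross terms so that everything telescopes to $I_p$. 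One has to be careful throughout to keep all matrix products in their correct left-to-right order, since $\overline{\Lambda}^{(1)}$ is noncommutative — but this is exactly why the complex-variable reduction is legitimate only as a bookkeeping device: the identity, once written with all factors in a fixed order, is a formal consequence of the associativity of matrix multiplication and the two algebraic identities above, so it holds verbatim over any (noncommutative, unital) ring in which the relevant inverses exist. The analogous computation for $G(\lambda)F(\lambda)=I_p$ is symmetric.

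Finally I would invoke the transfer principle: the formal identity $F\star G = G\star F = I_p$ (as power series with coefficients in $\left(\overline{\Lambda}^{(1)}\right)^{p\times p}$) follows because both sides are power series in $z$ whose coefficients are the same fixed polynomial expressions in $A,B,C,D,D^{-1}$ that appear in the complex computation, now with the ordinary product replaced by the $\star$-product; since the complex identity holds for all small $\lambda$, the corresponding coefficient-wise identities hold, and hence the $\star$-product identity holds. This also shows $F^{-\star}$ exists as a power series and equals the right-hand side of \eqref{leminv1}. The only genuine subtlety — and the step I would be most careful about — is making sure that the passage from "true for complex $\lambda$ near $0$" to "true as an identity of formal power series over $\overline{\Lambda}^{(1)}$" is justified; the cleanest way is to observe that all the inverses involved, $(I-zA)^{-\star}$ and $(I-zA^{\times})^{-\star}$, are themselves power series in $z$ with coefficients polynomial in $A$ (respectively $A^{\times}$), obtained by the Neumann-type expansion $(I-zA)^{-\star}=\sum_{n\ge 0} z^n A^n$ (which makes sense formally and, by Corollary \ref{ps-convergence-matrix}-type reasoning, converges when $z_B$ is small), so that comparing coefficients of $z^n$ on both sides of $F\star G=I_p$ reduces to finitely many identities in the matrix ring generated by $A,B,C,D,D^{-1}$ — and those are precisely the identities one has just checked hold in the complex case, hence in any such ring.
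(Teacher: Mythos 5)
Your overall strategy is the same as the paper's: establish the identity for $z=\lambda\in\mathbb{C}$, where $\star$ reduces to the ordinary product, and then transfer it by replacing $\lambda$ with $z$ and pointwise products with $\star$-products. The algebraic core differs, however. The paper \emph{derives} the formula: it writes $F(\lambda)=D\bigl(I+\lambda D^{-1}C(I_N-\lambda A)^{-1}B\bigr)$ and applies the push-through identity \eqref{invinv} with $a=\lambda D^{-1}C$ and $b=(I_N-\lambda A)^{-1}B$, which produces $(I_N-\lambda A^{\times})^{-1}$ after absorbing the resolvent. You instead \emph{verify} the stated candidate $G(\lambda)=D^{-1}-\lambda D^{-1}C(I-\lambda A^{\times})^{-1}BD^{-1}$ by expanding $F(\lambda)G(\lambda)$ and using a resolvent identity; this is equally legitimate, and your explicit justification of the transfer step (coefficient-wise comparison via the Neumann expansion $(I-zA)^{-\star}=\sum_{n\ge0}z^nA^n$, so that everything reduces to identities in the noncommutative ring generated by $A,B,C,D,D^{-1}$) spells out what the paper merely asserts, which is a genuine gain in rigor. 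One slip to correct: since $A-A^{\times}=BD^{-1}C$, the resolvent identity is $(I-\lambda A)^{-1}-(I-\lambda A^{\times})^{-1}=+\lambda(I-\lambda A)^{-1}BD^{-1}C(I-\lambda A^{\times})^{-1}$, not the $-\lambda$ version you wrote; with the correct sign the bracket $(I-\lambda A)^{-1}-(I-\lambda A^{\times})^{-1}-\lambda(I-\lambda A)^{-1}BD^{-1}C(I-\lambda A^{\times})^{-1}$ vanishes and $F(\lambda)G(\lambda)$ telescopes to $I_p$ as intended (and symmetrically for $G(\lambda)F(\lambda)$), whereas with your sign the cross terms would add rather than cancel. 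This is a fixable sign error, not a structural gap. Also, your invertibility remark should cite the corollary on matrices over $\overline{\Lambda}^{(1)}$ (invertible iff the body is invertible) rather than Theorem \ref{inv} entrywise, though in fact no a priori invertibility of $F(\lambda)$ is needed once you prove $FG=GF=I_p$.
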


\begin{proof}
As already mentioned, we first consider the case $z=\lambda\in\mathbb{C}$. Then, equation \eqref{leminv1} becomes
\[
F(z)^{-1}=D^{-1}-zD^{-1}C(I_N-zA^{\times})^{-1}BD^{-1}.
\]

Recall the formula
\begin{equation}
\label{invinv}
(I_n+ab)^{-1}=I-a(I_m+ba)^{-1}b,
\end{equation}
where $a$ and $b$ are matrices of appropriate sizes in an algebra which contains the complex numbers. In the above formula, we set
\[
a=\lambda D^{-1}C\quad {\rm and}\quad b=(I_N-\lambda A)^{-1}B
\]
for values of $\lambda$ such that the inverse $(I_N-\lambda A)^{-1}$ exists. Therefore,
\[
\begin{split}
F(\lambda)^{-1}&=\left[D\left(I+\lambda D^{-1}C(I_N-\lambda A)^{-1}B\right)\right]^{-1}\\
      &=\left[I-\lambda D^{-1}C\left(I_N-(I_N-\lambda A)^{-1}B\lambda D^{-1}C\right)^{-1}(I_N-\lambda A)^{-1}B\right]D^{-1}\\
      &=D^{-1}- \lambda D^{-1}C\left(I_N+(I_N-\lambda A)^{-1}B\lambda D^{-1}C\right)^{-1}(I_N-\lambda A)^{-1}BD^{-1}\\
      &=D^{-1}- \lambda D^{-1}C\left(I_N-\lambda A+\lambda B D^{-1}C\right)^{-1}BD^{-1}\\
      &=D^{-1}-D^{-1}C(I_N-\lambda A^\times)^{-1}BD^{-1},
\end{split}
\]
which was to be shown. As already mentioned, the proof of \eqref{leminv1} follows similarly by replacing $\lambda$ by a generic $\overline{\Lambda}^{(1)}$-valued variable $z$ and the pointwise product by the star product.
\end{proof}

\begin{lemma}
\label{lemsumprod}
Let \begin{equation}
F_j(z)=D_j+zC_j\star (I_{N_j}-zA_j)^{-\star}B_j,\quad j=1,2
\end{equation}
be two realizations of compatible sizes. Then:\\
$(1)$ A realization of $F_1(z)F_2(z)$ is given by
\begin{equation}
A=\begin{pmatrix}A_1&B_1C_2\\ 0&A_2\end{pmatrix},\quad B=\begin{pmatrix} B_1D_2\\ B_2\end{pmatrix},\quad C\begin{pmatrix} C_1& D_1C_2\end{pmatrix},\quad D=D_1D_2.
\end{equation}
$(2)$ A realization of $F_1+F_2$ is given by
\begin{equation}
A=\begin{pmatrix}A_1&0\\ 0&A_2\end{pmatrix},\quad B=\begin{pmatrix} B_1\\ B_2\end{pmatrix},\quad C=\begin{pmatrix} C_1& C_2\end{pmatrix},\quad D=D_1+D_2.
\end{equation}
$(3)$ A realization of $\begin{pmatrix}F_1&F_2\end{pmatrix}$ is given by
\begin{equation}
A=\begin{pmatrix}A_1&0\\ 0&A_2\end{pmatrix},\quad B=\begin{pmatrix} B_1&0\\0& B_2\end{pmatrix},\quad C=\begin{pmatrix} C_1& C_2\end{pmatrix},
\quad D=\begin{pmatrix}D_1& D_2\end{pmatrix}
\end{equation}
$(4)$ A realization of $\begin{pmatrix}F_1\\F_2\end{pmatrix}$ is given by
\begin{equation}
A=\begin{pmatrix}A_1&0\\ 0&A_2\end{pmatrix},\quad B=\begin{pmatrix} B_1\\B_2\end{pmatrix},\quad C=\begin{pmatrix} C_1&0\\ 0& C_2\end{pmatrix},
\quad D=\begin{pmatrix}D_1\\ D_2\end{pmatrix}.
\end{equation}
\end{lemma}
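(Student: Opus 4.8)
The plan is to follow exactly the strategy the authors have just announced for Lemma \ref{leminv}: first establish each of the four realization formulas in the classical case $z=\lambda\in\mathbb{C}$, where the entries of $A_j,B_j,C_j,D_j$ lie in the (noncommutative, but associative and unital) algebra $\overline{\Lambda}^{(1)}$, and then observe that the same algebraic identities go through verbatim after replacing the pointwise product by the star product $\star$ and $\lambda$ by a $\overline{\Lambda}^{(1)}$-valued variable $z$. The point is that none of the manipulations uses commutativity of the underlying scalars — only associativity, distributivity, and the identity $(I-zA)^{-\star}$ behaving formally like $\sum_{n\ge 0}(zA)^{\star n}$ — and these are all available here by Definition \ref{def-star-product} together with Theorem \ref{ps-convergence-matrix} (applied entrywise to the nilpotent-body part) which guarantees the relevant resolvent series converge in $\left(\overline{\Lambda}^{(1)}\right)^{N\times N}$ for $z$ in a neighborhood of the origin.

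For part $(1)$, the product, I would verify directly that with the stated block $A,B,C,D$ one has, working over $\mathbb{C}$ first,
\[
D+\lambda C(I_{N_1+N_2}-\lambda A)^{-1}B = \left(D_1+\lambda C_1(I_{N_1}-\lambda A_1)^{-1}B_1\right)\left(D_2+\lambda C_2(I_{N_2}-\lambda A_2)^{-1}B_2\right).
\]
This is the standard ``series connection'' computation: invert the block upper-triangular matrix $I-\lambda A$ using the Schur-complement formula
\[
\begin{pmatrix} I_{N_1}-\lambda A_1 & -\lambda B_1C_2\\ 0 & I_{N_2}-\lambda A_2\end{pmatrix}^{-1}
=\begin{pmatrix}(I_{N_1}-\lambda A_1)^{-1} & \lambda (I_{N_1}-\lambda A_1)^{-1}B_1C_2(I_{N_2}-\lambda A_2)^{-1}\\ 0 & (I_{N_2}-\lambda A_2)^{-1}\end{pmatrix},
\]
then multiply out $C(\cdot)B$ and collect terms; the cross term reproduces exactly $\lambda^2 C_1(I-\lambda A_1)^{-1}B_1\, C_2(I-\lambda A_2)^{-1}B_2$, and adding $D=D_1D_2$ and the two ``diagonal'' terms $\lambda D_1 C_2(I-\lambda A_2)^{-1}B_2$ and $\lambda C_1(I-\lambda A_1)^{-1}B_1 D_2$ gives the product of the two realizations. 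Parts $(2)$, $(3)$, $(4)$ are easier still: since $A$ is block-diagonal, $(I-\lambda A)^{-1}$ is block-diagonal with blocks $(I-\lambda A_j)^{-1}$, and the claimed identities for $F_1+F_2$, $\begin{pmatrix}F_1 & F_2\end{pmatrix}$, and $\begin{pmatrix}F_1\\ F_2\end{pmatrix}$ then follow by a one-line block multiplication.

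The only genuine subtlety — and the step I would flag as the main thing to be careful about rather than a real obstacle — is that, unlike in Lemma \ref{leminv}, one must be sure that all the resolvent inverses appearing in the classical computation actually exist: here $I-\lambda A$ and its Schur complements are invertible for $\lambda$ in a neighborhood of $0$ by the Corollary following Proposition \ref{decomposition} (a matrix over $\overline{\Lambda}^{(1)}$ is invertible iff its body is, and the body of $I-\lambda A$ is $I$ at $\lambda=0$), so the identities hold on that neighborhood and then, being polynomial/analytic identities valid for all scalars $\lambda$, persist when $\lambda$ is replaced by a variable and the product by $\star$. Because the star product is associative and the passage $\lambda\rightsquigarrow z$, $\text{(pointwise)}\rightsquigarrow\star$ is a ring homomorphism on the relevant algebra of formal power series, every identity established over $\mathbb{C}$ lifts automatically, which is precisely why the authors say the proofs ``follow similarly'' and may be left to the reader. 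I would therefore present parts $(1)$–$(4)$ by giving the block computation for $(1)$ in the complex case in full, noting the block-diagonal simplification for $(2)$–$(4)$, and closing with the one-sentence transfer principle.
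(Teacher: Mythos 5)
Your proposal is correct and follows exactly the route the paper itself prescribes: the authors explicitly leave the proof of Lemma \ref{lemsumprod} to the reader, stating that one first verifies the formulas for $z=\lambda\in\mathbb{C}$ as in the classical complex-coefficient case and then replaces the pointwise product by the star product and $\lambda$ by a $\overline{\Lambda}^{(1)}$-valued variable, which is precisely your block Schur-complement computation for part $(1)$, the block-diagonal resolvent argument for parts $(2)$--$(4)$, and your closing transfer principle. Your added care about invertibility of $I-\lambda A$ near the origin (via invertibility of the body) is consistent with the paper's Theorem \ref{inv} and its matrix corollary, so there is no gap.
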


A $k$-th order polynomial in $\Gamma^{p\times q}$ is a finite sum of the form
\begin{equation}
\label{poly}
M(z)=M_0+zM_1+\cdots +z^kM_k
\end{equation}
where $M_0,\ldots, M_k\in\left(\overline{\Lambda}\right)^{p\times q}$.

\begin{lemma}
\label{polreal}
Any polynomial in $z$ admits a realization.
\end{lemma}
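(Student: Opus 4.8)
The plan is to construct an explicit realization of the polynomial $M(z)=M_0+zM_1+\cdots+z^kM_k$ using a companion-type block structure, mimicking the classical construction for matrix polynomials and then invoking the fact established earlier in this section that such formulas, once verified for $z=\lambda\in\mathbb{C}$, transfer verbatim to the $\star$-product setting with a $\overline{\Lambda}^{(1)}$-valued variable. Since the paper already proves (Lemma \ref{lemsumprod}) that sums of realizations are realizations, one cheap route is to observe that $z^jM_j$ admits a realization for each $j$ and then add them; so the real content is a realization of the single monomial $z^jM_j$.

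First I would dispose of the monomial case. For $j=0$, the constant $M_0$ has the trivial realization with $A,B,C$ empty (or of size zero) and $D=M_0$. For $j\ge 1$, I would write down the $jp\times jp$ nilpotent block ``shift''
\begin{equation}
A=\begin{pmatrix}0&I_p&0&\cdots&0\\ 0&0&I_p&\cdots&0\\ \vdots&&&\ddots&\vdots\\ 0&0&0&\cdots&I_p\\ 0&0&0&\cdots&0\end{pmatrix},\quad B=\begin{pmatrix}0\\ \vdots\\ 0\\ I_p\end{pmatrix},\quad C=\begin{pmatrix}M_j&0&\cdots&0\end{pmatrix},\quad D=0,
\end{equation}
all of whose entries lie in $\Lambda$ (indeed in the integers), and check that $zC\star(I_{jp}-zA)^{-\star}B=z^jM_j$. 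For $z=\lambda\in\mathbb C$ this is the standard identity $(I-\lambda A)^{-1}=I+\lambda A+\cdots+\lambda^{j-1}A^{j-1}$ since $A^j=0$, so $\lambda C(I-\lambda A)^{-1}B=\lambda^j M_j$; the series is finite, hence there is no convergence issue, and the same computation is valid with $\star$ in place of pointwise product and $z$ in place of $\lambda$, exactly as in the proof of Lemma \ref{leminv}.

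Then I would assemble the full polynomial. Having realizations $\begin{pmatrix}A_j&B_j\\ C_j&D_j\end{pmatrix}$ of $z^jM_j$ for $j=0,\ldots,k$, part $(2)$ of Lemma \ref{lemsumprod} applied $k$ times gives a realization of $M(z)=\sum_{j=0}^k z^jM_j$, namely $A=\mathrm{diag}(A_1,\ldots,A_k)$, $B=\mathrm{col}(B_1,\ldots,B_k)$, $C=\mathrm{row}(C_1,\ldots,C_k)$, $D=M_0$; all blocks have entries in $\Lambda$, as required by the definition of a realization. Alternatively, and slightly more elegantly, one can bypass the summation and directly exhibit the companion realization of $M$ with $A$ the $kp\times kp$ shift above, $B=\mathrm{col}(0,\ldots,0,I_p)$, $C=\mathrm{row}(M_1,M_2,\ldots,M_k)$ and $D=M_0$, verifying $zC\star(I_{kp}-zA)^{-\star}B=zM_1+z^2M_2+\cdots+z^kM_k$ by the same nilpotency argument.

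I do not expect a genuine obstacle here: the only points requiring care are bookkeeping of block sizes and the observation that every matrix appearing has entries in $\Lambda$ (in fact constant ones), so that the definition of realization is met, together with the remark -- already made in the text preceding Lemma \ref{leminv} -- that an identity proved for complex $\lambda$ carries over to the $\star$-product because the intermediate sums are all finite. Thus the ``hard part,'' such as it is, is merely to state the companion form cleanly and note that $(I_{kp}-zA)^{-\star}$ is the finite $\star$-series $I+zA+\cdots+z^{k-1}A^{\star(k-1)}$ since $A$ is nilpotent of index $k$.
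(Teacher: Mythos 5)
Your proof is correct, but it realizes the monomials differently from the paper. The paper's own proof is shorter and more structural: it invokes Lemma \ref{lemsumprod} to reduce to constants and to functions of the form $zM$, for which it writes the trivial realizations $A=B=C=0$, $D=M$ and $A=D=0$, $C=M$, $B=I_q$; higher powers $z^jM_j$ are then implicitly obtained through the product formula of Lemma \ref{lemsumprod}(1), and the polynomial through the sum formula. You instead exhibit an explicit companion-type realization of each monomial $z^jM_j$ via a nilpotent block shift $A$, verify it for $z=\lambda\in\mathbb{C}$ using $A^j=0$ (so all series are finite), and transfer to the $\star$-product exactly as in Lemma \ref{leminv}; the final assembly by Lemma \ref{lemsumprod}(2) is the same as the paper's. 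What your route buys is a concrete, minimal-size state-space formula and independence from the product formula; what the paper's route buys is brevity, since the iterated products of the degree-one realizations produce essentially the same shift structure you wrote by hand. Two cosmetic points to fix if you keep your version: for $M_j\in\left(\overline{\Lambda}^{(1)}\right)^{p\times q}$ the shift blocks should be $I_q$ (state dimension $jq$, with $B$ of size $jq\times q$), not $I_p$, so that the realization of a $p\times q$ function has compatible sizes; and in your ``direct'' companion realization of the whole polynomial, with the upper shift and $B=\mathrm{col}(0,\ldots,0,I_q)$ one gets $zC\star(I-zA)^{-\star}B=zM_k+z^2M_{k-1}+\cdots+z^kM_1$, so you must reverse the order of the blocks in $C$ (take $C=\begin{pmatrix}M_k&\cdots&M_1\end{pmatrix}$) or place $I_q$ in the first block of $B$ with the transposed shift. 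Neither issue affects the substance of the argument.
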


\begin{proof}
In view of Lemma \ref{lemsumprod}, it suffices to prove that constant terms and terms of the form $zM$ admit realizations. But this is clear. Indeed,
a constant supermatrix $M\in\left(\overline{\Lambda}\right)^{p\times q}$ corresponds to the realization $A=B=C=0$ and $D=M$, while the function $zM$ correponds to $C=M$, $A=D=0$, $B=I_q$.
\end{proof}

We say $(C,A)$ is an observable pair of matrices if
\begin{equation}
\label{obs}
\cap_{u=0}^\infty \ker CA^u=\left\{0\right\}.
\end{equation}
Moreover, the pair $(A,B)$ is said to be controllable if
\begin{equation}
\xi(I-\lambda A)^{-1}B\equiv 0\,\,\Longrightarrow\,\, \xi=0,
\label{cont}
\end{equation}
where $\lambda\in\mathbb{R}$.

\begin{definition}
The realization \eqref{real123} is called minimal if the pair $(C,A)$ is observable and the pair $(A,B)$ is controllable.
\end{definition}

\begin{proposition}
Two minimal realizations are similar.
\end{proposition}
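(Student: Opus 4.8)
The statement is the Grassmann-algebra form of the classical state-space isomorphism theorem, so the plan is to transport the usual proof, passing first (as in the proof of Lemma~\ref{leminv}) to $z=\lambda\in\mathbb C$ to extract the coefficients and then reinstating the $\star$-product and a $\overline{\Lambda}^{(1)}$-valued variable. Write the two minimal realizations as $(A_j,B_j,C_j,D_j)$, $j=1,2$, with state sizes $N_1,N_2$, and let $F$ be their common value. Expanding $(I-zA_j)^{-\star}=\sum_{k\ge 0}z^kA_j^k$ in \eqref{real123} and unwinding Definition~\ref{def-star-product}, the coefficients of $F$ are $f_0=D_j$ and $f_{k+1}=C_jA_j^{k}B_j$ for $k\ge 0$; since both realizations give the same $F$, this yields $D_1=D_2$ together with $C_1A_1^{k}B_1=C_2A_2^{k}B_2$ for every $k\ge 0$. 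So the whole task reduces to producing an invertible $T$ with $A_2T=TA_1$, $TB_1=B_2$ and $C_2T=C_1$.

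To build $T$, I would introduce for $n\in\mathbb N$ the observability and reachability matrices
\[
\mathcal O_j^{(n)}=\begin{pmatrix}C_j\\ C_jA_j\\ \vdots\\ C_jA_j^{\,n-1}\end{pmatrix},\qquad
\mathcal R_j^{(n)}=\begin{pmatrix}B_j & A_jB_j & \cdots & A_j^{\,n-1}B_j\end{pmatrix}
\]
(with entries in $\Lambda$); the coefficient identities say exactly that the block Hankel matrix $\mathcal O_j^{(n)}\mathcal R_j^{(n)}=:H_n$ and the shifted one $\mathcal O_j^{(n)}A_j\mathcal R_j^{(n)}=:H_n'$ do not depend on $j$. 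The point I would then have to establish is that for $n$ large enough $\mathcal O_1^{(n)},\mathcal O_2^{(n)}$ have matrix left inverses and $\mathcal R_1^{(n)},\mathcal R_2^{(n)}$ have matrix right inverses over $\overline{\Lambda}^{(1)}$: because $A_j,B_j,C_j$ have entries in some $\Lambda_M$, which is finite dimensional over $\mathbb C$, the descending chains of kernels $\bigcap_{u<n}\ker C_jA_j^{u}$ and of left annihilators $\bigcap_{u<n}\{\xi:\xi A_j^{u}B_j=0\}$ stabilize after finitely many steps; combined with the observability condition \eqref{obs} and the controllability condition \eqref{cont}, this gives a common $n$ for which $\mathcal O_j^{(n)}$ is injective and $\mathcal R_j^{(n)}$ has trivial left annihilator, and from there one constructs one-sided inverses (e.g.\ over $\Lambda_M$, correcting a body-$I$ factor by Theorem~\ref{inv}) and checks they persist in the $\ell^1$-completion. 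I then fix such an $n$, drop the superscript, and write $\mathcal O_j^{+}\mathcal O_j=I_{N_j}$ and $\mathcal R_j\mathcal R_j^{+}=I_{N_j}$.

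The remaining computation is formal and I would carry it out as follows. Put $T=\mathcal O_2^{+}\mathcal O_1$. From $\mathcal O_1\mathcal R_1=H_n=\mathcal O_2\mathcal R_2$ we get $T\mathcal R_1=\mathcal O_2^{+}\mathcal O_2\mathcal R_2=\mathcal R_2$, hence $T=T\mathcal R_1\mathcal R_1^{+}=\mathcal R_2\mathcal R_1^{+}$ and therefore also $\mathcal O_2T=\mathcal O_2\mathcal R_2\mathcal R_1^{+}=H_n\mathcal R_1^{+}=\mathcal O_1\mathcal R_1\mathcal R_1^{+}=\mathcal O_1$. The first block column of $T\mathcal R_1=\mathcal R_2$ is $TB_1=B_2$, and the first block row of $\mathcal O_2T=\mathcal O_1$ is $C_2T=C_1$. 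For $A$, the shifted Hankel identity gives $\mathcal O_2(A_2T)=\mathcal O_2A_2\mathcal R_2\mathcal R_1^{+}=H_n'\mathcal R_1^{+}=\mathcal O_1A_1\mathcal R_1\mathcal R_1^{+}=\mathcal O_1A_1=\mathcal O_2(TA_1)$, so $\mathcal O_2(A_2T-TA_1)=0$ and left-invertibility of $\mathcal O_2$ yields $A_2T=TA_1$. Finally, repeating the construction with $1$ and $2$ interchanged produces $\tilde T=\mathcal O_1^{+}\mathcal O_2=\mathcal R_1\mathcal R_2^{+}$ with $\tilde T\mathcal R_2=\mathcal R_1$ and $\mathcal O_1\tilde T=\mathcal O_2$; then $\mathcal O_1(\tilde TT)=\mathcal O_2T=\mathcal O_1$ and $(T\tilde T)\mathcal R_2=T\mathcal R_1=\mathcal R_2$ force $\tilde TT=I_{N_1}$ and $T\tilde T=I_{N_2}$, so $T$ is invertible with $T^{-1}=\tilde T$ (and $N_1=N_2$, by passing to bodies). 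This gives $A_2=TA_1T^{-1}$, $B_2=TB_1$, $C_2=C_1T^{-1}$, $D_1=D_2$, i.e.\ the asserted similarity.

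The step I expect to be the real obstacle is the middle one. Over a field, observability and controllability immediately produce one-sided invertible matrices; over the noncommutative ring $\overline{\Lambda}^{(1)}$ injectivity of a matrix does not by itself give a left inverse, so one has to use seriously that $A_j,B_j,C_j$ lie in a finite-dimensional $\Lambda_M$ and handle with care the interaction between these finite-generator arguments and the $\ell^1$-closure. A secondary subtlety, depending on the exact meaning attached to ``similar'', is to verify that $T$ and $T^{-1}$ can be taken with entries in $\Lambda$ rather than merely in $\overline{\Lambda}^{(1)}$; everything else is the classical identity chase transported verbatim.
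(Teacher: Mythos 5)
Your overall architecture (reduce to the coefficient identities $D_1=D_2$, $C_1A_1^kB_1=C_2A_2^kB_2$, factor the Hankel matrices, build $T=\mathcal O_2^{+}\mathcal O_1=\mathcal R_2\mathcal R_1^{+}$ and chase identities) is the classical Hankel route, and the final identity chase is fine; but the step you yourself flag as the obstacle is a genuine gap, and the sketch you give for it does not close it. Over $\overline{\Lambda}^{(1)}$ (or any $\Lambda_M$) a one-sided inverse of $\mathcal O_j$ exists if and only if the \emph{body} $(\mathcal O_j)_B$ has full column rank over $\mathbb C$: indeed, taking bodies of $\mathcal O^{+}\mathcal O=I$ forces full rank, and conversely full body rank makes $\mathcal O^*\mathcal O$ invertible (its body is $\mathcal O_B^*\mathcal O_B$, so Theorem \ref{inv}/its matrix corollary applies) and yields $\mathcal O^{+}=(\mathcal O^*\mathcal O)^{-1}\mathcal O^*$. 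The paper's hypotheses \eqref{obs} and \eqref{cont} are module-kernel and left-annihilator conditions, which are a priori weaker than body full rank, and your argument --- stabilization of descending kernel chains in a finite-dimensional $\Lambda_M$ plus ``correcting a body-$I$ factor'' --- cannot bridge this: $\Lambda_M$ is full of zero divisors, so an injective matrix map need not split, and chain stabilization gives no one-sided inverse. What actually rescues the step is a different observation you did not make: since $A,B,C$ have entries in some $\Lambda_M$, multiplication by the top form $i_1\cdots i_M$ annihilates every zero-body element of $\Lambda_M$; hence if some nonzero complex vector $x_0$ satisfied $C_BA_B^u x_0=0$ for all $u$, then $x_0\, i_1\cdots i_M$ would be a nonzero element of $\bigcap_u\ker CA^u$, contradicting \eqref{obs}. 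Thus module observability (and, symmetrically, controllability via left multiplication by the top form) does imply body-level full rank, and only then do your $\mathcal O_j^{+}$, $\mathcal R_j^{+}$ exist. Without this (or an equivalent) argument, the central step of your proof is unsupported.

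For comparison, the paper takes a route that sidesteps pseudo-inverses altogether: from the kernel identity for $\bigl(F(x)-F(y)\bigr)/(x-y)$ it defines intertwining operators $U,V$ directly on the elements $(I_{N_j}-yA_j)^{-1}B_j\xi$, using observability of $(C_j,A_j)$ to show these assignments are well defined and controllability of $(A_j,B_j)$ to get $UV=\mathbb I_{N_2}$, $VU=\mathbb I_{N_1}$, and then shows $U,V$ admit matrix representations. Your approach, once the one-sided-invertibility lemma is properly established, is arguably more explicit (it produces $T$ by a closed formula and gives the intertwining relations $A_2T=TA_1$, $TB_1=B_2$, $C_2T=C_1$ in full), but as written it rests on an unproved claim that does not follow from the paper's definitions by the argument you sketch.
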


\begin{proof}
Assume
\[
F(z)=D_j+zC_j\star (I_{N_j}-zA_j)^{-\star}B_j, \quad j=1,2,
\]
are two minimal realizations of $F$. Then $D_1=D_2$. Note that it is possible, then, to write
\[
\frac{F(x)-F(y)}{x-y}=C_1(I_{N_1}-xA_1)^{-1}(I_{N_1}-yA_1)^{-1}B_1=C_2(I_{N_2}-xA_2)^{-1}(I_{N_2}-yA_2)^{-1}B_2,
\]
and $x,y\in\mathbb{R}$, with the understanding that the left-side is equal to
$F^\prime(x)$ if $x=y$.\\

We also define the following operators $U$ and $V$:
\[
U\left((I_{N_1}-yA_1)^{-1}B_1\xi\right)=(I_{N_2}-yA_2)^{-1}B_2\xi,
\]
\[
V\left((I_{N_2}-yA_2)^{-1}B_2\xi\right)=(I_{N_1}-yA_1)^{-1}B_1\xi,
\]
where $\xi\in\left(\overline{\Lambda}^{(1)}\right)^q$. The fact that the pairs $(C_j,A_j)$ are observable assures that such operators are well defined. From the above definitions and because the pairs $(A_j,B_j)$ are controllable, we obtain
\[
UV=\mathbb{I}_{N_2},
\]
\[
VU=\mathbb{I}_{N_1},
\]
where $\mathbb{I}_N$ denotes the identity operator.

Now, we show that the operators $U$ and $V$ admit a matrix representation. To do so, we define elements $e_j$, $j=1,\hdots,N$, of modules $\left(\overline{\Lambda}^{(1)}\right)^N$ with components characterized by
\[
\left(e_j\right)_k = \delta_{jk}.
\]
Writing
\[
(I_{N_1}-yA_1)^{-1}B_1\xi = \sum_{j=1}^{N_1} e_j \alpha_j,
\]
where $\alpha_j\in\overline{\Lambda}^{(1)}$,
\[
U\left(\sum_{j=1}^{N_1} e_j \alpha_j\right) = \sum_{j=1}^{N_1} U\left(e_j\right) \alpha_j = \sum_{j=1}^{N_1} \left(\sum_{k=1}^{N_2}u_{kj}e_j\right) \alpha_j.
\]
Hence, $U$ can be represented by a $N_2\times N_1$ matrix $\tilde{U}=\left(u_{kj}\right)$. Similarly, $V$ can be also represented by $N_1\times N_2$ matrix $\tilde{V}$. Moreover, because $(A_1,B_1)$ is controllable, $N_2\leq N_1$. On the other hand, the fact $(A_2,B_2)$ is also controllable implies that $N_1\leq N_2$.
\end{proof}

\begin{proposition}
If a realization \eqref{real123} is minimal, then the size of $A$ is minimal.
\end{proposition}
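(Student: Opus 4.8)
The plan is to derive the minimality of the size of $A$ directly from the previous proposition, which asserts that any two minimal realizations are similar (and, in the course of its proof, that similar minimal realizations have state spaces of the same dimension). First I would let $F$ be given by a minimal realization \eqref{real123} with state matrix $A$ of size $N\times N$, and suppose toward a contradiction that $F$ also admits some realization, not necessarily minimal, with state matrix $A'$ of size $N'\times N'$ and $N'<N$. The strategy is to show that any realization can be ``compressed'' to a minimal one without increasing the size of the state matrix; combined with the previous proposition this forces $N\le N'$, the desired contradiction.

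The key steps, in order, are as follows. Step one: starting from an arbitrary realization $(A',B',C',D')$, restrict to the controllable subspace, i.e. the span of the columns of $A'^u B'$ for $u\ge 0$ (equivalently, quotient out the unobservable subspace $\cap_{u\ge0}\ker C'A'^u$ in the dual picture). Over $\overline{\Lambda}^{(1)}$ one must be slightly careful, since it is not a field; but the construction in the proof of the previous proposition already handles exactly this, producing matrix representations for the intertwining operators $U$ and $V$ built out of $(I-yA')^{-1}B'$. Step two: observe that this restriction produces a new realization of the same function $F$ whose state matrix has size $\le N'$, and which is both controllable and observable, hence minimal. Step three: invoke the previous proposition — two minimal realizations of $F$ are similar, and in particular (again from that proof, where one shows $N_1\le N_2$ and $N_2\le N_1$ via controllability of both pairs) their state matrices have equal size. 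Therefore the minimal realization we started with has $N$ equal to the size of the compressed minimal realization, which is $\le N'<N$, a contradiction. Hence no smaller realization exists, i.e. the size of $A$ is minimal among all realizations of $F$.

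The main obstacle I expect is Step one: carrying out the reduction of an arbitrary realization to a minimal one in the module setting over $\overline{\Lambda}^{(1)}$, rather than over a field. In the classical complex case one simply passes to quotient vector spaces, but here ``$\ker$'' and ``image'' of supermatrices are submodules that need not be free or complemented in an obvious way, so one cannot naively pick bases. The way around this is to mimic precisely the mechanism already used in the proof that minimal realizations are similar: one never really needs an abstract quotient, only the well-definedness of the operators sending $(I-yA')^{-1}B'\xi$ to the corresponding expression in the reduced realization, and this well-definedness is guaranteed by observability exactly as before. Once that is granted, the remaining bookkeeping — that the reduced data still realizes $F$, and that it is controllable and observable — is routine and parallels the complex case verbatim after setting $z=\lambda\in\mathbb{C}$ and then replacing the pointwise product by the star product.
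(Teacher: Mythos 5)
Your overall strategy (compress an arbitrary smaller realization to a minimal one, then invoke the similarity of minimal realizations to force equality of the state sizes) is the standard classical route; note, for what it is worth, that the paper states this proposition without proof, so there is no argument of the authors to compare against. The genuine gap is your Step one, and the workaround you offer does not close it. The mechanism in the proof that two minimal realizations are similar --- the operators $U$ and $V$ acting on elements $(I_{N_j}-yA_j)^{-1}B_j\xi$ --- is available only because \emph{both} realizations there are assumed minimal: observability of the pairs $(C_j,A_j)$ is what makes $U$ and $V$ well defined, and controllability of \emph{both} pairs is what yields $UV=\mathbb{I}$, $VU=\mathbb{I}$ and the two inequalities $N_1\le N_2$ and $N_2\le N_1$. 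In your compression step the realization $(A',B',C',D')$ is arbitrary, hence in general neither observable nor controllable, so the claim that ``well-definedness is guaranteed by observability exactly as before'' borrows a hypothesis that is precisely what is missing.

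Moreover, the compression itself is problematic over $\overline{\Lambda}^{(1)}$ in a way your last paragraph acknowledges but does not resolve. The controllable submodule generated by the columns of $A'^{u}B'$, $u\ge 0$ (and likewise the quotient by $\cap_{u\ge 0}\ker C'A'^{u}$) is a finitely generated submodule (resp.\ quotient) of $\bigl(\overline{\Lambda}^{(1)}\bigr)^{N'}$, but such modules need not be free --- for instance the submodule generated by $i_1$ is not free, since $i_1$ annihilates it --- so the compressed system need not be representable by supermatrices at all, and in particular it has no well-defined ``size $\le N'$'' to feed into the similarity proposition. To repair the argument you must either prove a Kalman-type reduction theorem in this module setting, or avoid compression altogether: for example, define only the one-sided map $V\bigl((I-yA')^{-1}B'\xi\bigr)=(I-yA)^{-1}B\xi$, whose well-definedness uses only observability of the minimal pair $(C,A)$, which you do have, and then extract $N\le N'$ from controllability of $(A,B)$ via a rank-type argument valid over $\overline{\Lambda}^{(1)}$ (e.g.\ by passing to bodies, where classical linear algebra applies); even this last step needs care, since the paper's controllability condition is a left-annihilator condition and does not immediately say that the columns of $(I-yA)^{-1}B$ generate $\bigl(\overline{\Lambda}^{(1)}\bigr)^{N}$. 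As written, the proof is incomplete at exactly the step you flagged as the main obstacle.
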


Having presented some definitions and results on realizations, we are now ready to discuss rational functions. First, we give for a function in $\Gamma^{p\times q}$ a number of equivalent definitions of a rational function, namely:
\begin{enumerate}
\item In terms of multiplication and inversion of polynomials with respect to the $star$-product;
\item In terms of $\star$ multiplication and $\star$ inversion of matrix polynomials;
\item In terms of a realization, as defined in the theory of linear systems;
\item In terms of the Taylor coefficients of the function;
\item In terms of the backward-shift operator;
\end{enumerate}

The equivalence between the various definitions is presented at the end of the section.

\begin{definition}
\label{def1}
A function in $\Gamma$ is rational if it can be obtained by a finite number of $\star$-inversions and $\star$-multiplications starting with constants and $z$.
In case the functions live in $\Gamma^{p\times q}$, it is called rational if all its entries have the above property.
\end{definition}

\begin{definition} (definition in terms of product of polynomials)
A rational function $F\in\Gamma^{p\times r}$ invertible at $0$ is a finite product of the type
\begin{equation}
\label{M1M2M3}
F(z) = M_1(z) \star M_2(z)^{-\star} \star M_3(z),
\end{equation}
where $M_1\in\Gamma^{p\times q}$, $M_2\in\Gamma^{q \times q}$, and $M_3\in\Gamma^{q \times r}$ are polynomials.
\label{def2}
\end{definition}

\begin{definition} (realization definition)
A function $F\in\Gamma^{p\times q}$ is called rational if it can be represented as
\eqref{real123}.
\label{def3}
\end{definition}

\begin{definition} (Taylor coefficients)
Let $F\in\Gamma^{p\times q}$. It is rational if
\begin{equation}
f_n=\begin{cases}\, D\,\,\quad\hspace{7.5mm} if\,\,n=0\\
                    CA^{n-1}B\,\, if\,\, n\ge 1
\end{cases}.
\end{equation}
\label{def4}
\end{definition}

If $F\in\Gamma^{p\times q}$, define backward-shift operator $R_0$ by
\begin{equation}
R_0F(z)=f_1+zf_2+\hdots,
\end{equation}
where $F$ is of the form \eqref{function-in-gamma}. Note that for $z=\lambda\in\mathbb C$ we can write
\begin{equation}
R_0F(\lambda)=\left\{
\begin{array}{l l}
\frac{F(\lambda)-F(0)}{\lambda} & \text{if } \lambda \neq 0 \\
f_1                             & \text{if } \lambda=0
\end{array}
\right..
\label{rorororo}
\end{equation}

\begin{definition}
Let $F\in\Gamma^{p\times q}$. It is rational if the module spanned by $\{R_0^nFc\}$ is finitely generated, when $n$ runs through $\mathbb N$ and $c$ runs through $\left(\overline{\Lambda}^{(1)}\right)^q$.
\label{def6}
\end{definition}

\begin{theorem}
The five definitions given above of a rational function -- i.e., definitions \ref{def1}-\ref{def6} -- are equivalent.
\end{theorem}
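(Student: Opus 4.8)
The plan is to prove the equivalence of Definitions \ref{def1}--\ref{def6} by establishing a cycle of implications, roughly in the order
\[
\ref{def3}\ \Longrightarrow\ \ref{def4}\ \Longrightarrow\ \ref{def6}\ \Longrightarrow\ \ref{def3},\qquad
\ref{def3}\ \Longrightarrow\ \ref{def2}\ \Longrightarrow\ \ref{def1}\ \Longrightarrow\ \ref{def3}.
\]
The guiding principle, already used repeatedly in this section, is that every identity to be checked is an identity between formal power series with coefficients in $\left(\overline{\Lambda}^{(1)}\right)^{p\times q}$; such an identity holds as soon as it holds upon substituting $z=\lambda\in\mathbb{R}$ with $\lambda$ in a neighborhood of $0$ and replacing the $\star$-product by the pointwise product, since then it reduces to the classical (matrix, hence associative-algebra) theory of rational functions and realizations. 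So most steps are imported from the classical case almost verbatim; the work is in checking that the reductions are legitimate in the noncommutative Banach-algebra setting.

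Concretely: For \ref{def3}$\Rightarrow$\ref{def4}, expand $(I_N-zA)^{-\star}=\sum_{n\ge 0}z^nA^n$ (a convergent geometric series in a neighborhood of the origin since $A$ has entries in $\Lambda$, so its body is nilpotent and one controls the tail via Theorem \ref{ps-convergence-matrix}) and read off $f_0=D$, $f_n=CA^{n-1}B$. For \ref{def4}$\Rightarrow$\ref{def6}, compute $R_0^nF(z)=\sum_{k\ge 0}z^k f_{n+k}$, so $R_0^nF(z)c=\sum_{k\ge 0}z^k CA^{n+k-1}Bc$; since $A$ is a fixed $N\times N$ matrix, the Cayley--Hamilton relation over the commutative ring $\overline{\Lambda}^{(1)}_{even}$ (applied entrywise, or rather to the matrix $A$ after noting its characteristic polynomial has coefficients in $\overline{\Lambda}^{(1)}$) shows $A^N$ lies in the $\overline{\Lambda}^{(1)}$-span of $I,A,\ldots,A^{N-1}$, whence the module generated by $\{R_0^nFc\}$ is generated by the finitely many $R_0^nF$, $0\le n\le N-1$, applied to the coordinate vectors. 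For \ref{def6}$\Rightarrow$\ref{def3}, take a finite generating set, let $A$ be the matrix of $R_0$ acting on the span, $C$ the evaluation-at-$0$ map, $B$ the map $c\mapsto R_0Fc$, and $D=F(0)$; then $F(z)=D+zC\star(I-zA)^{-\star}B$ by the standard backward-shift realization argument, done first for $z=\lambda\in\mathbb{R}$. The remaining triangle \ref{def3}$\Rightarrow$\ref{def2}$\Rightarrow$\ref{def1}$\Rightarrow$\ref{def3} is mostly formal: Lemma \ref{polreal} plus Lemma \ref{leminv} plus Lemma \ref{lemsumprod} show realizations are closed under $\star$-product and $\star$-inverse, giving \ref{def1}$\Rightarrow$\ref{def3} and \ref{def2}$\Rightarrow$\ref{def1}; and \ref{def3}$\Rightarrow$\ref{def2} follows by writing $(I-zA)^{-\star}=\operatorname{adj}(I-zA)\star(\det(I-zA))^{-\star}$, clearing the scalar denominator, so $F$ becomes $M_1\star M_2^{-\star}\star M_3$ with $M_2$ the scalar polynomial $\det(I-zA)\,I_q$.

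The main obstacle is the step \ref{def6}$\Rightarrow$\ref{def3}, specifically verifying that the ``module spanned by $\{R_0^nFc\}$'' being finitely generated really yields a \emph{finite-dimensional} realization space on which $R_0$, evaluation at $0$, and $B$ have matrix representations over $\overline{\Lambda}^{(1)}$. One must argue that a finitely generated submodule of $\Gamma^{p\times q}$ that is $R_0$-invariant admits a generating set closed under the relevant operations, and that the coordinates of $R_0$ with respect to such a set lie in $\Lambda$ (or at least in $\overline{\Lambda}^{(1)}$) so that the convergence results of Section \ref{sec3} apply to $(I-zA)^{-\star}$; the noncommutativity means one should be careful about left versus right module structures, mirroring the care taken in the proof that two minimal realizations are similar. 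A secondary subtlety is the use of Cayley--Hamilton: it is cleanest to invoke it for the matrix $A$ once the realization is in hand (to pass back to finiteness claims like \ref{def4}$\Rightarrow$\ref{def6}), rather than trying to run it abstractly on the module, since $\overline{\Lambda}^{(1)}$ is not a field and determinants must be handled through the even subalgebra. Apart from these points, each implication is a routine transcription of the classical argument with $\lambda\in\mathbb{R}$ and $\star\rightsquigarrow$ pointwise, legitimized after the fact by analytic continuation of formal power series identities.
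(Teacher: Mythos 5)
Your overall plan---reduce every identity to $z=\lambda$ in a neighborhood of the origin, transcribe the classical realization arguments, and use Lemmas \ref{polreal}, \ref{leminv}, \ref{lemsumprod} together with the backward-shift construction---is the same as the paper's, and your treatment of \ref{def3}$\Leftrightarrow$\ref{def4}, \ref{def6}$\Rightarrow$\ref{def3} and \ref{def1}$\Rightarrow$\ref{def3} matches its Steps 1--5. However, two of your steps rest on determinant-based linear algebra that is simply not available over the noncommutative algebra $\overline{\Lambda}^{(1)}$, and avoiding exactly this is how the paper's proof is organized. In \ref{def4}$\Rightarrow$\ref{def6} you invoke Cayley--Hamilton to get $A^N$ in the span of $I,A,\ldots,A^{N-1}$. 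Cayley--Hamilton is a theorem about matrices over \emph{commutative} rings; here $A$ has arbitrary entries in $\overline{\Lambda}^{(1)}$ (odd parts included), its entries need not lie in the even subalgebra, and no characteristic polynomial or determinant is defined for such a matrix. You flag this yourself but do not resolve it, so the step as written fails. No such input is needed: since $(R_0^nF)(z)c=C\star(I-zA)^{-\star}\,\bigl(A^{n-1}Bc\bigr)$ with $A^{n-1}Bc$ a constant column, every $R_0^nFc$ already lies in the module generated by the $N$ columns of $C\star(I-zA)^{-\star}$, which is the paper's (finiteness) argument.

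The step \ref{def3}$\Rightarrow$\ref{def2} via $(I-zA)^{-\star}=\operatorname{adj}(I-zA)\star\bigl(\det(I-zA)\bigr)^{-\star}$ has the same defect: $\det$ and $\operatorname{adj}$ are not defined for matrices with Grassmann-valued entries (the paper itself notes in Section \ref{schuralgo} that defining such determinants is nontrivial). Definition \ref{def2} allows a \emph{matrix} polynomial $M_2$, so no scalar denominator is needed; the paper simply writes $M_1(z)=\begin{pmatrix}D& zC\end{pmatrix}$, $M_2(z)=\begin{pmatrix}I&0\\0&I-zA\end{pmatrix}$, $M_3=\begin{pmatrix}I\\ B\end{pmatrix}$, exhibiting \eqref{real123} as a special case of \eqref{M1M2M3}. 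A smaller point: your justification of \ref{def2}$\Rightarrow$\ref{def1} (``realizations are closed under $\star$-product and $\star$-inverse'') in fact proves \ref{def2}$\Rightarrow$\ref{def3}; the passage to entrywise scalar rationality is a separate implication, which the paper records as \ref{def3}$\Rightarrow$\ref{def1}. With these repairs---the column-containment argument in place of Cayley--Hamilton and the block factorization in place of $\det$/$\operatorname{adj}$---your cycle of implications closes and essentially coincides with the paper's proof.
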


\begin{proof} We divide the proof into a number of steps.\\

STEP 1: {\sl Definition \ref{def1} is equivalent to Definition \ref{def3}.}\smallskip

By Lemma \ref{polreal}, polynomials admit realizations. Moreover, each entry of a matrix-valued function obtained as in Definition \ref{def1} admits a realization
-- see Lemma \ref{lemsumprod} and the first item of Lemma \ref{leminv}. The fact that a matrix-valued function itself admits a realization follows from Lemma \ref{lemsumprod}.\\

STEP 2: {\sl Definition \ref{def2} is equivalent to Definition \ref{def3}.}\smallskip

The direct implication follows from Lemma \ref{leminv} and Lemma \ref{lemsumprod} and from the fact that polynomials admit realizations (Lemma \ref{polreal}).
Conversely, note that \eqref{real123} is a special case of \eqref{M1M2M3}. In fact, one can see it with the choices
\[
M_1(z)=\begin{pmatrix} D&zC\end{pmatrix},\quad M_2(z)=\begin{pmatrix}I&0\\0&I-zA\end{pmatrix},\quad \text{and} \quad M_3=\begin{pmatrix}I\\ B\end{pmatrix}.
\]

STEP 3: {\sl Definition \ref{def3} is equivalent to Definition \ref{def4}.}\smallskip

By the definition of the star product, we have
\[
F(z)=D+\sum_{n=1}^\infty z^nCA^{n-1}B 
\]
and hence Definition \ref{def4} holds. To prove the converse, it suffices to compute the converging series
\[
D+\sum_{n=1}^\infty \lambda^nCA^{n-1}B=D+\lambda C(I-\lambda A)^{-1}B,
\]
where $\lambda\in\mathbb{C}$, and consider its extension to a $\overline{\Lambda}^{(1)}$-valued variable $z$.\\

STEP 4: {\sl Definition \ref{def3} and Definition \ref{def6}
are equivalent.}\smallskip

Again, we restrict $z$ to be a complex variable $z=\lambda\in\mathbb C$. An induction gives
\[
(R_0^nF)(\lambda)=C(I-\lambda A)^{-1}A^{n-1}B,\quad n=1,2,\ldots
\]
and so the module generated by the $R_0^{n}F$ is included in the span of the columns of the matrix-function $C(I-\lambda A)^{-1}$. This span is a finite dimensional vector space, and 
when we extend $C(I-\lambda A)^{-1}$ to the Grassmann variable, we get a finitely generated module.\smallskip

Conversely, let $F$ be a supermatrix whose columns are composed by a generating set of functions. Then, there exists a matrix $A$ such that
\[
R_0F(z)=F(z)A
\]
In view of \eqref{rorororo} we have
\[
F(\lambda)-F(0)=\lambda F(\lambda)A,
\]
from which we get $F(\lambda)$ and its extension $F(z)$.\\

STEP 5: {\sl Definition \ref{def3} implies Definition \ref{def1}.}\smallskip

This follows directly from the definition of a realization.
\end{proof}

In the setting of complex numbers and Schur analysis, one usually adds metric conditions to a rational function, for instance, being contractive in the open unit disk, or taking unitary values (with respect to a possibly indefinite metric) on the unit circle. It is of interest to translate these metric conditions into conditions on a given realization of the function. In this work, such questions are addressed in Sections \ref{sec-schur} and \ref{sec-rkpm}.\smallskip

In preparation to Section \ref{sec-rkpm}, we introduce the following definition.

\begin{definition}
\label{rationalpotapov}
Let $J\in\Lambda^{n\times n}$ be a signature matrix, meaning that it is both self-adjoint and unitary. A rational function $U$ in $\Gamma$ is said to be $J$-unitary -- or symplectic -- if
\[
U(z)JU(z^{-\dag})^*=J,
\]
where the expression makes sense.
\end{definition}

\section{Rational Schur-Grassmann functions}
\setcounter{equation}{0}
\label{sec-schur}

In the classical setting, Schur functions can be charaterized in a number of equivalent ways (for instance as contractive multipliers of the Hardy space). In the rational setting, a Schur function $S$ can be defined as a matrix-valued rational function which is analytic at infinity with minimal realization given by
\begin{equation}
S(\lambda) = D+C(\lambda I- A)^{-1}B
\label{rational-analytic-infinity}
\end{equation}
such that
\begin{equation}
\left(
\begin{array}{c c}
A & B \\
C & D
\end{array}
\right)^*
\left(
\begin{array}{c c}
H & 0 \\
0 & I
\end{array}
\right)
\left(
\begin{array}{c c}
A & B \\
C & D
\end{array}
\right)\le
\left(
\begin{array}{c c}
H & 0 \\
0 & I
\end{array}
\right)
\label{schur-condition}
\end{equation}
for some (not uniquely defined) $H< 0.$ This is the positive real lemma, also known as Kalman-Yakubovich-Popov theorem; see \cite{Anderson_Moore,faurre,MR525380} and its generalization \cite{DDGK}. For the description of all $H$ satisfying \eqref{schur-condition} see e.g. \cite{faurre,MR525380}.\smallskip

The reader should note that \eqref{rational-analytic-infinity} has a different expression for its realization than the rational functions studied in the previous section, which can be written as
\begin{equation}
S(\lambda) = H + \lambda G(I-\lambda T)^{-1} F
\label{realization-auxiliar}
\end{equation}
and, in general, is not analytic at infinity. However, we just remind that both expressions are in fact equivalent if the matrix $A$ is invertible. In fact, let $H=D-CA^{-1}B$, $G=-CA^{-1}$, $T=A$, and $F=AB$. This allows to rewrite \eqref{realization-auxiliar} as
\begin{eqnarray*}
S(\lambda) & = & D-CA^{-1}B - \lambda CA^{-1} (I-\lambda A)^{-1} AB \\
           & = & D-CA^{-1}B + CA^{-1} (\lambda A-I)^{-1}(I+\lambda A-I) B \\
           & = & D + CA^{-1} (\lambda A-I)^{-1} B \\
           & = & D + C(\lambda I-A)^{-1} B.
\end{eqnarray*}

With the clarification for the use of a realization with a different expression, we take \eqref{rational-analytic-infinity} conditioned to \eqref{schur-condition} as the basis for our definition of a rational Schur-Grassmann function.

\begin{definition}
The $(\overline{\Lambda}^{(1)})^{p\times q}$-valued rational function  with realization
\[
S(z) = D+C\star(zI_N-A)^{-\star}B
\]
will be called a Schur-Grassmann function if there exists an Hermitian strictly negative matrix $H
\in (\overline{\Lambda}^{(1)})^{N\times N}$ (i.e., $H\prec 0$) such that
\begin{equation}
\left(
\begin{array}{c c}
A & B \\
C & D
\end{array}
\right)^*
\left(
\begin{array}{c c}
H & 0 \\
0 & I
\end{array}
\right)
\left(
\begin{array}{c c}
A & B \\
C & D
\end{array}
\right)\preceq
\left(
\begin{array}{c c}
H & 0 \\
0 & I
\end{array}
\right).
\label{expression-def-schur}
\end{equation}
\end{definition}

\begin{proposition}
\label{propo_equiv}
Let $S$ be a $(\overline{\Lambda}^{(1)})^{p\times q}$-valued rational function. Then, $S$ is a Schur-Grassmann function if and only if its body part $S_B$ is a Schur function.
\end{proposition}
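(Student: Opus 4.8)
The plan is to reduce the statement to the classical positive real lemma by taking bodies throughout. First I would observe that the realization data $(A,B,C,D)$ with entries in $(\overline{\Lambda}^{(1)})^{\bullet\times\bullet}$ has a well-defined body $(A_B,B_B,C_B,D_B)$ consisting of complex matrices, and that the body map is an algebra homomorphism (it kills souls, is multiplicative on products by Proposition~\ref{invertibility-lambda}-type reasoning, commutes with the adjoint $*$ since $(\cdot)_B$ intertwines $\dag$ with complex conjugation, and commutes with inversion by Theorem~\ref{inv} and the corollary on matrix inversion). Consequently, from $S(z)=D+C\star(zI_N-A)^{-\star}B$ one gets by applying the body map coefficient-by-coefficient that $S_B(\lambda)=D_B+C_B(\lambda I_N-A_B)^{-1}B_B$, i.e.\ the body of the realization of $S$ is a realization of $S_B$ of the form \eqref{rational-analytic-infinity}. (One should note here that $S_B$ being rational in the classical sense is immediate from Definition~\ref{def4}, since $(f_n)_B$ are the Taylor coefficients of $S_B$ and they have the form $C_BA_B^{n-1}B_B$.)

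Next I would translate the operator inequality \eqref{expression-def-schur}. Write $W=\begin{pmatrix}A&B\\C&D\end{pmatrix}$ and $P=\begin{pmatrix}H&0\\0&I\end{pmatrix}$, so the condition is $W^*PW\preceq P$, i.e.\ $P-W^*PW\succeq 0$ for some Hermitian $H\prec 0$. By Proposition~\ref{body-reduction-positive}, a supermatrix is non-negative iff it is self-adjoint with non-negative body, and by Proposition~\ref{body-part} (extended to $\overline{\Lambda}^{(1)}$ via Corollary~\ref{positive-sqrt}) strict negativity $H\prec 0$ is equivalent to $H_B<0$. Taking bodies and using multiplicativity of the body map, $P-W^*PW\succeq 0$ forces $P_B-W_B^*P_BW_B\ge 0$ with $H_B<0$, which is exactly \eqref{schur-condition} for the realization of $S_B$; hence $S_B$ is a classical rational Schur function. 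This proves the ``only if'' direction.

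For the converse, suppose $S_B$ is a Schur function, so there is a complex Hermitian $H_0<0$ with $P_0-W_B^*P_0W_B\ge 0$ where $P_0=\begin{pmatrix}H_0&0\\0&I\end{pmatrix}$. The natural candidate is to lift $H_0$ to $H:=H_0\in(\overline{\Lambda}^{(1)})^{N\times N}$ viewed as a (bodiless-soul) supermatrix, and check that $P-W^*PW\succeq 0$. Its body is $P_0-W_B^*P_0W_B\ge 0$ and $H=H_0$ is visibly self-adjoint with $H_B=H_0<0$, so by Proposition~\ref{body-reduction-positive} and the extension of Proposition~\ref{body-part} we are done — provided $P-W^*PW$ is self-adjoint, which it is since $P$ and the construction are manifestly $*$-symmetric. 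So the same $H$ works.

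The main obstacle I anticipate is bookkeeping around the body map rather than a genuine difficulty: one must be careful that the realization $S(z)=D+C\star(zI-A)^{-\star}B$ really does have body equal to the classical realization $D_B+C_B(\lambda I-A_B)^{-1}B_B$ — this needs that $(I_N-zA)^{-\star}$ has body $(I_N-\lambda A_B)^{-1}$, which follows from Theorem~\ref{inv}/its matrix corollary together with the fact that the body map respects the $\star$-product on coefficients (it is just the body map applied to each Taylor coefficient, and body is multiplicative). A secondary point to handle cleanly is that one needs $S_B$ to actually be well-defined and rational as a classical function; this is where invoking Definition~\ref{def4} (Taylor-coefficient characterization) is cleanest, since $(CA^{n-1}B)_B = C_BA_B^{n-1}B_B$. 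Once these compatibility facts are recorded, both implications are immediate from Proposition~\ref{body-reduction-positive} and the body characterization of strict positivity/negativity.
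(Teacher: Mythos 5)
Your proposal is correct and follows essentially the same route as the paper, whose proof is precisely the one-line observation that Proposition~\ref{body-reduction-positive} (together with the body characterization of strict positivity/negativity) applied to \eqref{expression-def-schur} reduces the inequality to its body, which is \eqref{schur-condition}; your write-up simply makes explicit the compatibility of the body map with products, adjoints, inverses and the $\star$-realization, and the lifting of $H_0$ in the converse. Nothing further is needed.
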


\begin{proof}
This follows from Proposition \ref{body-reduction-positive} applied to \eqref{expression-def-schur}.
\end{proof}

\begin{remark}
{\rm Proposition \ref{propo_equiv} allows us to translate directly to the Schur-Grassmann functions a number of properties of Schur functions -- as we show next. However, not every result follows from a simple reduction to the body of the function. Interpolation problems are an example of such results, as can be seen in the solution of the Nevanlinna-Pick interpolation in Section \ref{sec-interpolation}.}
\end{remark}

The corollaries presented in the rest of this section follow from Proposition \ref{propo_equiv}.

\begin{corollary}
Let $S$ be a $(\overline{\Lambda}^{(1)})^{p\times q}$-valued rational function. Then, $S$ is a Schur-Grassmann function if and only $S(z)S(z)^*\preceq I$ for $zz^\dag\preceq 1$.
\label{schur-ineq-mtx}
\end{corollary}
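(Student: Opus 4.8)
The plan is to reduce the statement to its classical analog via the body map, exactly as the preceding remark suggests. The key point is Proposition \ref{propo_equiv}: $S$ is a Schur-Grassmann function if and only if $S_B$ is a (classical) Schur function. So I would first recall that a classical rational function $S_B$ analytic at infinity, with realization satisfying \eqref{schur-condition}, is a Schur function precisely when $S_B(\lambda)S_B(\lambda)^*\le I$ on the unit circle (equivalently in the closed unit disk), by the bounded real lemma / Kalman--Yakubovich--Popov theorem cited after \eqref{schur-condition}. That handles the body; the remaining work is to check that the Grassmann-valued inequality $S(z)S(z)^*\preceq I$ for all $z$ with $zz^\dag\preceq 1$ is \emph{equivalent} to the corresponding inequality on bodies, $S_B(\lambda)S_B(\lambda)^*\le I$ for all $\lambda$ with $|\lambda|\le 1$.

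Next I would carry out that equivalence in two directions. For the forward direction, if $S(z)S(z)^*\preceq I$ holds for every $z$ with $zz^\dag\preceq 1$, then in particular it holds for $z=\lambda\in\mathbb C$ with $|\lambda|\le 1$ (such $\lambda$ are superreal with $\lambda\lambda^\dag=|\lambda|^2\preceq 1$), and applying the body map to the matrix inequality — using that $(\cdot)_B$ is an algebra homomorphism on $\overline{\Lambda}^{(1)}$ and that by Proposition \ref{body-reduction-positive} a non-negative supermatrix has non-negative body — yields $S_B(\lambda)S_B(\lambda)^*\le I$. Hence $S_B$ is Schur, so $S$ is Schur-Grassmann. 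For the converse, suppose $S$ is Schur-Grassmann, i.e. $S_B$ is a classical Schur function, so $I-S_B(\lambda)S_B(\lambda)^*\ge 0$ on the closed disk. Given $z$ with $zz^\dag\preceq 1$, I write $z=z_B+z_S$ and use Corollary \ref{f-extension}: $S(z)=\sum_{n\ge 0}\frac{S^{(n)}(z_B)}{n!}z_S^n$, a convergent expansion in $\overline{\Lambda}^{(1)}$, so that $I-S(z)S(z)^*$ is a self-adjoint supermatrix whose body is $I-S_B(z_B)S_B(z_B)^*\ge 0$. By Proposition \ref{body-reduction-positive}, a self-adjoint supermatrix with non-negative body is non-negative, so $S(z)S(z)^*\preceq I$. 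The one subtlety is that $zz^\dag\preceq 1$ forces $|z_B|^2=(zz^\dag)_B\le 1$, i.e. $z_B$ lies in the closed unit disk, which is where we need $S_B$ contractive; since $S_B$ is rational and analytic at infinity this is exactly the Schur property (possibly after invoking the maximum modulus principle to pass from $|\lambda|=1$ to $|\lambda|\le 1$).

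The step I expect to require the most care is making precise the claim ``self-adjoint plus non-negative body implies non-negative'' in the matrix setting and verifying it applies here: Proposition \ref{body-reduction-positive} is stated for supermatrices, and one must be sure that $I-S(z)S(z)^*$ is genuinely self-adjoint (which follows from $S(z)S(z)^*=(S(z)S(z)^*)^*$ and $(zz^\dag)^\dag=zz^\dag$, so the constraint set is $\dag$-stable) and that its body is the classical $I-S_B(z_B)S_B(z_B)^*$ (which follows from the body map being a $*$-homomorphism, using $(m^\dag)_B=\overline{m_B}$). A secondary, minor obstacle is bookkeeping the relation between ``$zz^\dag\preceq 1$'' and ``$|z_B|\le 1$'': one should note that if $zz^\dag\preceq 1$ then its body $|z_B|^2\le 1$, and conversely every $\lambda$ in the closed complex unit disk satisfies $\lambda\lambda^\dag\preceq 1$, so the two families of test points have the same bodies and the reduction is clean.

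\begin{proof}
By Proposition \ref{propo_equiv}, $S$ is a Schur-Grassmann function if and only if $S_B$ is a (classical) rational Schur function, and by the bounded real lemma this holds if and only if $S_B(\lambda)S_B(\lambda)^*\le I$ for all $\lambda$ in the closed unit disk.

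Suppose first that $S(z)S(z)^*\preceq I$ for every $z\in\overline{\Lambda}^{(1)}$ with $zz^\dag\preceq 1$. Taking $z=\lambda\in\mathbb C$ with $|\lambda|\le 1$ (so $\lambda\lambda^\dag=|\lambda|^2\preceq 1$) and applying the body map — which is a $*$-homomorphism and, by Proposition \ref{body-reduction-positive}, sends non-negative supermatrices to non-negative matrices — gives $S_B(\lambda)S_B(\lambda)^*\le I$. Hence $S_B$ is Schur and $S$ is Schur-Grassmann.

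Conversely, assume $S$ is Schur-Grassmann, so $I-S_B(\lambda)S_B(\lambda)^*\ge 0$ for $|\lambda|\le 1$. Let $z=z_B+z_S$ with $zz^\dag\preceq 1$; then its body satisfies $|z_B|^2=(zz^\dag)_B\le 1$. By Corollary \ref{f-extension}, $S(z)=\sum_{n=0}^\infty \frac{S^{(n)}(z_B)}{n!}z_S^n$ converges in $\left(\overline{\Lambda}^{(1)}\right)^{p\times q}$. Since the constraint set $\{z : zz^\dag\preceq 1\}$ is $\dag$-stable, $I-S(z)S(z)^*$ is a self-adjoint supermatrix, and its body equals $I-S_B(z_B)S_B(z_B)^*\ge 0$. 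By Proposition \ref{body-reduction-positive}, a self-adjoint supermatrix with non-negative body is non-negative, so $S(z)S(z)^*\preceq I$ for all such $z$.
\end{proof}
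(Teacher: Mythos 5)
Your proof is correct and takes essentially the same route as the paper: the paper disposes of this corollary by saying it follows from Proposition \ref{propo_equiv} (reduction to the body), and your argument is a detailed implementation of exactly that reduction, using Proposition \ref{body-reduction-positive} to pass between the Grassmann inequality $S(z)S(z)^*\preceq I$ and its body, together with the classical bounded real lemma. The extra care you take (self-adjointness of $I-S(z)S(z)^*$, and the identification of the constraint $zz^\dag\preceq 1$ with $|z_B|\le 1$) only fills in details the paper leaves implicit.
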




\begin{corollary}
Let $S$ be a $(\overline{\Lambda}^{(1)})^{p\times q}$-valued  rational function. Then $S$ is a Schur-Grassmann function if and only if the kernel
\begin{equation}
\sum_{n=0}^\infty z^n\left(I-S(z)S(w)^*\right)(w^\dag)^n
\label{spd-expression}
\end{equation}
is superpositive.
\label{schur-spd}
\end{corollary}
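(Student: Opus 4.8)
The plan is to reduce everything to the body via Proposition \ref{propo_equiv} and the extension of Proposition \ref{body-part} to $\overline{\Lambda}^{(1)}$, exactly as in the preceding corollaries, while being careful that the kernel \eqref{spd-expression} is a two-variable object living in $\overline{\Lambda}^{(1)}$ rather than a scalar, so ``superpositive'' must be interpreted in the sense of the reproducing kernel Banach module structure (a kernel $K(z,w)$ is superpositive when $\sum_{j,k} c_j^* K(z_j,z_k) c_k \succeq 0$ for all finite choices of points and coefficients in $\overline{\Lambda}^{(1)}$). First I would recall the classical fact that for a rational matrix function $s$ analytic in a neighborhood of the closed disk, $s$ is a Schur function if and only if the kernel $K_s(\lambda,\mu) = \sum_{n=0}^\infty \lambda^n (I - s(\lambda)s(\mu)^*)\bar\mu^{\,n} = \frac{I - s(\lambda)s(\mu)^*}{1-\lambda\bar\mu}$ is positive definite; this is the de Branges--Rovnyak / Schur-kernel characterization and may be invoked as a known result in the complex setting.

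Next I would write $S = S_B + S_S$ and expand the kernel \eqref{spd-expression} in its body and soul parts. The key computation is that the body of the kernel is precisely the classical Schur kernel of $S_B$: taking bodies commutes with the (convergent, by Theorem \ref{ps-convergence}-type arguments) sum $\sum_n z^n(\cdots)(w^\dag)^n$ and with the product $S(z)S(w)^*$, so $\bigl(\sum_{n=0}^\infty z^n(I-S(z)S(w)^*)(w^\dag)^n\bigr)_B = \sum_{n=0}^\infty z_B^n (I - S_B(z_B)S_B(w_B)^*)(\overline{w_B})^n$. Then, given points $z_1,\dots,z_m \in \overline{\Lambda}^{(1)}$ with $z_j z_j^\dag \preceq 1$ and coefficients $c_1,\dots,c_m$, the supernumber $\sum_{j,k} c_j^* K(z_j,z_k) c_k$ has body equal to $\sum_{j,k} (c_j)_B^* K_{S_B}((z_j)_B,(z_k)_B)(c_k)_B$, which is $\geq 0$ exactly when $S_B$ is a Schur function (by the classical characterization, noting $(z_j)_B(\overline{(z_j)_B}) \le 1$). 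By Proposition \ref{body-part} extended to $\overline{\Lambda}^{(1)}$, a real supernumber is $\succeq 0$ iff its body is $\geq 0$; one also checks the kernel value is a real supernumber (self-adjointness of the Gram-type sum, using $K(z,w)^* = K(w,z)$, which follows from $(S(z)S(w)^*)^* = S(w)S(z)^*$). Combining this with Proposition \ref{propo_equiv} gives the equivalence: $S$ is Schur-Grassmann $\iff$ $S_B$ is Schur $\iff$ the body of every Gram sum is $\geq 0$ $\iff$ the kernel is superpositive.

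The main obstacle I anticipate is not conceptual but bookkeeping: making precise the sense in which \eqref{spd-expression} is ``superpositive'' (the paper presumably fixes this in Section \ref{sec-rkpm}, so I would cite that definition), and justifying that the infinite sum over $n$ genuinely converges in $\overline{\Lambda}^{(1)}$ for $z z^\dag \preceq 1$ and $w w^\dag \preceq 1$ — this needs $|z_B|\le 1$, $|w_B| \le 1$ together with the quasi-nilpotence of the souls, in the spirit of the corollaries following Theorem \ref{ps-convergence}, so that ``taking bodies commutes with the sum'' is legitimate. A secondary subtlety is that in the non-commutative setting one should verify the polarization-type reduction is not needed here because superpositivity of a kernel is already phrased with general coefficients $c_j$; this is handled by Proposition \ref{null-matrix}-style reasoning only if one wanted to pass between the $c_j^* K c_j$ form and the $d^* K c$ form, but it is not required for the statement. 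Once convergence and the reality of the kernel values are pinned down, the equivalence is immediate from Proposition \ref{propo_equiv} and the body characterization, so I would keep the write-up short and point to the classical reference (e.g. \cite{Dym_CBMS}) for the scalar Schur-kernel fact.
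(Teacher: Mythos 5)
Your proposal is correct and follows essentially the same route as the paper, which (without writing out the details) derives this corollary directly from Proposition \ref{propo_equiv}: reduce to the body, where the kernel becomes the classical Schur kernel $\bigl(I-S_B(\lambda)S_B(\mu)^*\bigr)/(1-\lambda\bar\mu)$ of $S_B$, and translate positivity back via the body characterization of superpositivity. Your additional remarks on convergence of the sum and on the precise meaning of a superpositive kernel are sensible fillings-in of details the paper leaves implicit, not a deviation in method.
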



We now introduce an Hermitian form, needed for our next corollary. For a power series given by \eqref{function-in-gamma} with coefficients in $\left(\overline{\Lambda}^{(1)}\right)^{p\times q}$, we say that $F\in\Gamma^{p\times q}$ if
\[
\left[F,F\right]\in\left(\overline{\Lambda}^{(1)}\right)^{q\times q},
\]
where the form $\left[\cdot,\cdot\right]$ is defined similarly to the one introduced in \eqref{form-wiener}:
\begin{equation}
\left[F,G\right]\equiv\sum_{n=0}^\infty g_n^* f_n.
\label{def-form}
\end{equation}

We note that the restriction of \eqref{def-form} to the body part corresponds to the matrix-valued Hermitian form associated to the Hardy space of $\mathbb C^{p\times q}$-valued functions analytic in the open unit disk.

\begin{corollary}
\label{contrac}
Let $S$ is be a rational function. Thus $S$ is a Schur function if and only if
\begin{equation}
\left[M_S F, M_S F\right] \preceq \left[F, F\right],
\label{schur-contractive-form}
\end{equation}
where $M_S$ is the operator defined by
\[
M_SF\equiv S\star F.
\]
\end{corollary}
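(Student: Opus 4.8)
The plan is to reduce the statement to its body part and invoke Proposition \ref{propo_equiv} together with the classical characterization of rational Schur functions as contractive multipliers of the Hardy space. First I would record what the two sides of \eqref{schur-contractive-form} actually are: if $F(z)=\sum_{n=0}^\infty z^n f_n$ then $S\star F$ has coefficients $(S\star F)_n = \sum_{u=0}^n C A^{n-1-u} B f_u + D f_n$ (with the convention coming from Definition \ref{def4}), so $[M_S F, M_S F]$ and $[F,F]$ are both elements of $\left(\overline{\Lambda}^{(1)}\right)^{q\times q}$ whenever the relevant series converge, and the inequality \eqref{schur-contractive-form} is an inequality between self-adjoint supermatrices in the sense of Definition \ref{matrix-positivity}. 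The key observation is that the form $[\cdot,\cdot]$ is $\overline{\Lambda}^{(1)}$-linear in its coefficients and that taking the body commutes with it: $\bigl([F,F]\bigr)_B = \sum_{n=0}^\infty (f_n)_B^* (f_n)_B = [F_B,F_B]$, the classical Hardy-space Gram form; likewise $\bigl([M_S F, M_S F]\bigr)_B = [M_{S_B} F_B, M_{S_B} F_B]$ since the body of a Cauchy product is the Cauchy product of the bodies.

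With this in place the argument is short. By Proposition \ref{body-reduction-positive} (extended to $\overline{\Lambda}^{(1)}$, as noted in the remark following Corollary \ref{positive-sqrt}), the supermatrix inequality $[M_S F, M_S F] \preceq [F,F]$ holds for every admissible $F$ if and only if the difference is self-adjoint (automatic, since both terms are) and its body is non-negative, i.e. $[M_{S_B} F_B, M_{S_B} F_B] \le [F_B, F_B]$ in $\mathbb{C}^{q\times q}$ for every $F_B$ in the (classical, $\mathbb{C}^{p\times q}$-valued) Hardy space — here I use that as $F$ ranges over power series with coefficients in $\overline{\Lambda}^{(1)}$ its body $F_B$ ranges over exactly the classical Hardy functions, so quantifying over all $F$ is the same as quantifying over all $F_B$. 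But the latter inequality is precisely the statement that $S_B$ is a contractive multiplier of the Hardy space, which in the rational case is one of the standard equivalent characterizations of $S_B$ being a Schur function (equivalently $S_B(\lambda)S_B(\lambda)^*\le I$ on the disk, equivalently the positivity of the kernel in Corollary \ref{schur-spd}). By Proposition \ref{propo_equiv}, $S$ is a Schur-Grassmann function if and only if $S_B$ is a Schur function, and we are done. Alternatively one can chain Corollary \ref{schur-spd}: superpositivity of the kernel \eqref{spd-expression} is, by multiplying on the left by $F$-coefficients and summing, equivalent to \eqref{schur-contractive-form}, giving a proof that stays entirely within $\overline{\Lambda}^{(1)}$.

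The main obstacle is not the algebra but the bookkeeping of convergence and the precise meaning of "$F\in\Gamma^{p\times q}$": one must check that for $S$ rational (hence with coefficients $CA^{n-1}B$ of at most geometric growth) and $F$ with $[F,F]\in\left(\overline{\Lambda}^{(1)}\right)^{q\times q}$, the series defining $M_S F$ and $[M_S F, M_S F]$ converge in the $1$-norm, so that the displayed inequality is between honest elements of $\left(\overline{\Lambda}^{(1)}\right)^{q\times q}$; this is where the contractivity $S_B(\lambda)S_B(\lambda)^*\le I$ on the disk and the Banach-algebra inequality \eqref{1norm-ineq} do the real work, much as in the classical proof that a Schur multiplier is a contraction on $\mathbf H_2$. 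Once convergence is secured, the reduction to the body via Proposition \ref{body-reduction-positive} and Proposition \ref{propo_equiv} is routine, exactly in the spirit of the other corollaries in this section.
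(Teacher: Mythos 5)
Your proof is correct and follows essentially the same route as the paper: the paper offers no separate argument for this corollary beyond the remark that all corollaries of that section follow from Proposition \ref{propo_equiv}, i.e., by reduction to the body part, which is exactly the reduction you carry out (via Proposition \ref{body-reduction-positive} and the classical contractive-multiplier characterization of rational Schur functions). The details you add --- that taking bodies commutes with the star product and with the form $\left[\cdot,\cdot\right]$, plus the convergence caveats --- simply make explicit what the paper leaves implicit.
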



\begin{corollary}
Let $S$ be a $(\overline{\Lambda}^{(1)})^{p\times q}$-valued rational function given by
\[
S(z)=s_0+zs_1+\cdots,
\]
where $s_0,s_1,\ldots\in (\overline{\Lambda}^{(1)})^{p\times q}$. Moreover, let $L_N$ denote the lower triangular block Toepliz matrix
\[
L_N=\begin{pmatrix}s_0    &0       &\cdots &0      &0\\
                   s_1    &s_0     &\cdots &0      &0\\
                   \vdots &\vdots  &\ddots &\vdots &\vdots\\
                   s_N    &s_{N-1} &\cdots &s_1    &s_0\end{pmatrix}.
\]
Then $S$ is a Schur-Grassmann function if and only if $L_N^*L_N\preceq I$ for every $N\in\mathbb{N}$.
\end{corollary}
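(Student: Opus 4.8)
The plan is to reduce everything to the body, exactly as in the preceding corollaries, using Proposition \ref{propo_equiv}. First I would invoke that proposition to observe that $S$ is a Schur-Grassmann function if and only if its body $S_B$ is a (classical, $\mathbb{C}^{p\times q}$-valued) Schur function. Hence it suffices to show that the condition ``$L_N^* L_N \preceq I$ for every $N$'' is equivalent, via reduction to the body, to the classical characterization of $S_B$ being a Schur function in terms of the associated lower-triangular block Toeplitz matrices.

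Next I would spell out the body reduction of the matrices $L_N$. Writing $s_n = (s_n)_B + (s_n)_S$, the matrix $L_N$ decomposes as $L_N = (L_N)_B + (L_N)_S$, where $(L_N)_B$ is precisely the lower-triangular block Toeplitz matrix built from the complex matrices $(s_0)_B, \dots, (s_N)_B$, i.e. the matrix attached to the body function $S_B(\lambda) = (s_0)_B + \lambda (s_1)_B + \cdots$. By Proposition \ref{body-reduction-positive} (and its extension to $\overline{\Lambda}^{(1)}$, noted after Corollary \ref{positive-sqrt}), the supermatrix $I - L_N^* L_N$ is non-negative if and only if it is self-adjoint and its body $I - (L_N)_B^* (L_N)_B$ is non-negative in the usual sense. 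Self-adjointness of $I - L_N^* L_N$ is automatic. So ``$L_N^* L_N \preceq I$ for all $N$'' holds if and only if the classical contractivity condition $(L_N)_B^* (L_N)_B \le I$ holds for all $N$.

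Finally I would close the loop with the classical fact: a $\mathbb{C}^{p\times q}$-valued rational function $S_B$ analytic at the origin is a Schur function (contractive in $\mathbb{D}$) if and only if all the lower-triangular block Toeplitz matrices built from its Taylor coefficients are contractions. This is standard --- it is the statement that multiplication by $S_B$ is a contraction on the Hardy space $\mathbf{H}_2$ of $\mathbb{C}^q$-valued functions, whose matrix with respect to the standard basis is exactly the (infinite) lower-triangular block Toeplitz matrix, and a block lower-triangular Toeplitz operator is a contraction if and only if all its finite $(N+1)\times(N+1)$ truncations $(L_N)_B$ are. Alternatively one can cite the previous corollaries of this section (e.g. Corollary \ref{contrac}, the contractivity of $M_S$ with respect to the form $[\cdot,\cdot]$), whose restriction to the body is precisely this Hardy-space statement; in fact the present corollary is essentially the ``finite section'' reformulation of Corollary \ref{contrac}. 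Combining the three steps gives the equivalence.

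The main obstacle is purely bookkeeping rather than conceptual: one must check carefully that taking the body commutes with forming $L_N$ and with the operations $M \mapsto M^* M$, so that $(I - L_N^* L_N)_B = I - (L_N)_B^* (L_N)_B$, and that the soul part carries no additional information once self-adjointness and body-positivity are imposed --- this is exactly what the extension of Proposition \ref{body-reduction-positive} guarantees. No genuinely new estimate is needed beyond what has already been established in Sections \ref{sec2} and \ref{sec3}.
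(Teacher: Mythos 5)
Your proposal is correct and follows essentially the same route as the paper, which simply asserts that this corollary (like the others in that section) follows from Proposition \ref{propo_equiv} by reduction to the body; your write-up just makes explicit the bookkeeping, via Proposition \ref{body-reduction-positive}, that $I-L_N^*L_N\succeq 0$ is equivalent to the classical contractivity of $(L_N)_B$, together with the standard Hardy-space characterization of $S_B$ being a Schur function.
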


\section{Wiener-Grassmann algebra}
\setcounter{equation}{0}
\label{sec-wiener}

Now, we introduce the Wiener algebra associated to the Grassmann algebra $\overline{\Lambda}^{(1)}$, or simply the Wiener-Grassmann algebra, which is defined by
\[
\mathcal{W}_G^p = \left\{f(z)=\sum_{n\in\mathbb{Z}}z^n f_n \suchthat z\in\overline{\Lambda}^{(1)}, f_n\in\left(\overline{\Lambda}^{(1)}\right)^{p\times p}, \Vert\left[f,f\right]\Vert_1<\infty\right\},
\]
where, in a way similar to \eqref{def-form}, $\left[\cdot,\cdot\right]$ is the Hermitian form defined by
\begin{equation}
\left[f,g\right] = \sum_{n\in\mathbb{Z}} g_n^* f_n,
\label{form-wiener}
\end{equation}
endowed with the star product introduced in \eqref{star-product}, i.e.,
\begin{equation}
f\star g(z) \equiv \sum_{n\in\mathbb{Z}} z^n \left(\sum_{u\in\mathbb{Z}} f_u g_{n-u}\right),
\label{star-product-wiener}
\end{equation}
where $f,g\in\mathcal{W}_G^p$.

\begin{remark}
{\rm The coefficients $f_n$ of $f$ could also take values in any closure $\left(\overline{\Lambda}^{(m)}\right)^{p\times p}$ with respect to a $m$-norm
\[
\Vert f_n\Vert_m = \left(\sum_{\alpha\in\mathfrak{I}_0}\left|(f_n)_\alpha\right|^m\right)^{1/m}.
\]
The elements such that $\Vert[f,f]\Vert_1<\infty$ would still be associated with a Wiener-Grassmann algebra.}
\end{remark}

An import subalgebra of $\mathcal{W}_G^p$ for our discussions here is the set $\mathcal{W}_{BP}^p$, which will be called the Wiener-Bochner-Phillips algebra and is defined by
\[
\mathcal{W}_{BP}^p = \left\{f(t)=\sum_{n\in\mathbb{Z}}e^{int} f_n \suchthat t\in\mathbb{R}, f_n\in\left(\overline{\Lambda}^{(1)}\right)^{p\times p}, \sum_{n\in\mathbb{Z}}\Vert f_n\Vert_1<\infty\right\},
\]
where $i\in\mathbb{C}$ is the imaginary unit. Note that the star product is reduced to the regular product in $\mathcal{W}_{BP}^p$.

\begin{lemma}
Let $f\in\mathcal{W}_{BP}^p$. Then, $f$ has an inverse in $\mathcal W_{BP}^p$ if and only if $f(t)$ has an inverse in $\overline{\Lambda}^{(1)}$, for every $t$.
\label{BP-lemma}
\end{lemma}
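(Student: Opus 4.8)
The plan is to treat the two implications separately. The forward one is immediate: if $f$ has an inverse $g\in\mathcal W_{BP}^p$, then $fg=gf$ is the unit of $\mathcal W_{BP}^p$ --- the constant function with value $I_p$ --- and, since the star product is the pointwise product on $\mathcal W_{BP}^p$, evaluating at an arbitrary $t\in\mathbb R$ gives $f(t)g(t)=g(t)f(t)=I_p$; thus $f(t)$ is invertible in $(\overline{\Lambda}^{(1)})^{p\times p}$ for every $t$ (equivalently, by the corollary stating that a supermatrix is invertible if and only if its body is, the complex matrix $f(t)_B$ is invertible for every $t$).

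For the converse, the quickest route is to note that $\mathcal W_{BP}^p$ is exactly the algebra of absolutely convergent Fourier series with values in the unital Banach algebra $A=(\overline{\Lambda}^{(1)})^{p\times p}$: the norm \eqref{matrix-norm} is submultiplicative (by the same computation that deduces it from \eqref{1norm-ineq}) and complete, and $\sum_n\Vert f_n\Vert_1<\infty$ is precisely absolute convergence of the $A$-valued Fourier series; hence the Bochner--Phillips theorem applies verbatim and gives the statement. Staying instead within the paper's own toolkit, one argues as follows: the body map $f\mapsto f_B$ is a continuous unital algebra homomorphism of $\mathcal W_{BP}^p$ onto the classical matrix Wiener algebra $\mathcal W^p$, with $(fg)_B=f_Bg_B$. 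If $f(t)$ is invertible in $A$ for every $t$ then $f_B(t)$ is an invertible complex $p\times p$ matrix for every $t$, so by the Wiener--L\'evy theorem for $\mathcal W^p$ (recalled in the introduction) $f_B$ has an inverse $g_0\in\mathcal W^p\subseteq\mathcal W_{BP}^p$. Then $fg_0=I_p+s$ and $g_0f=I_p+s'$ with $s,s'$ ``pure soul'', i.e.\ $s(t)_B=s'(t)_B=0$ for every $t$; if $I_p+s$ and $I_p+s'$ are invertible in $\mathcal W_{BP}^p$, then $f$ has a right and a left inverse there and hence a two-sided inverse. So everything reduces to inverting $I_p+s$ for a pure-soul $s$.

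\textbf{The main obstacle} is precisely this last inversion. A Neumann series cannot be summed naively: passing to a pure-soul element does not make the Wiener norm small (truncating frequencies does not shrink $\Vert\cdot\Vert_1$), so $\Vert s\Vert_1$ need not be less than $1$. What holds is that such an $s$ is \emph{quasi-nilpotent} in $\mathcal W_{BP}^p$, i.e.\ $\lim_n\Vert s^n\Vert_1^{1/n}=0$; equivalently $\sigma_{\mathcal W_{BP}^p}(s)=\bigcup_t\sigma_A(s(t))$, and each $\sigma_A(s(t))=\{0\}$ because $s(t)-\lambda I_p$ has body $-\lambda I_p$ and so is invertible in $A$ exactly when $\lambda\neq0$ (again by the corollary to Proposition~\ref{decomposition}). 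Granting this, $I_p+s=I_p-(-s)$ is invertible with inverse $\sum_{k\ge0}(-s)^k$, finishing the proof. The spectral-inclusion identity invoked here is exactly the content of the Bochner--Phillips theorem, whose standard proof localizes on short arcs of the circle, patches local constant approximations of $f$ by a smooth (hence Wiener-class) partition of unity, and then runs a Neumann argument on the patched remainder. The realistic plan is therefore to prove the forward direction and the body reduction by hand and to cite Bochner--Phillips (or to reproduce its localization argument) for the quasi-nilpotent step, rather than to reprove it from scratch.
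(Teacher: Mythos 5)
Your proof is correct and follows essentially the same route as the paper: the forward implication is immediate by pointwise evaluation, and the converse is obtained by viewing $\mathcal W_{BP}^p$ as the algebra of absolutely convergent Fourier series with coefficients in the Banach algebra $(\overline{\Lambda}^{(1)})^{p\times p}$ and invoking the Bochner--Phillips theorem, which is exactly what the paper does. Your additional body-reduction sketch is fine as commentary, and you correctly recognize that its key quasi-nilpotency step is itself the content of Bochner--Phillips, so citing that theorem (as the paper does) is the right call.
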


\begin{proof}
It is trivial that $f(t)$ is invertible for every $t$ if $f$ is invertible in $\mathcal{W}_{BP}^p$. The converse is just an adapted version of the original result presented by Bochner and Phillips in \cite[Theorem 1]{MR4:218g} to the case where the coefficients are elements of $\overline{\Lambda}^{(1)}$, instead of a generic non-commutative ring.
\end{proof}

The next result is the analogous of the Wiener-L\'{e}vy theorem in the present setting.

\begin{theorem}
Let $f\in\mathcal{W}_G^p$ and $f_{BP}\equiv f(e^{it})\in\mathcal{W}_{BP}^p$, where $t\in\mathbb{R}$. Then, the following statements are equivalent:
\begin{itemize}
\item[(i)]   $f$ is invertible in $\mathcal{W}_G^p$;
\item[(ii)]  $f_{BP}$ is invertible in $\mathcal W_{BP}^p$;
\item[(iii)] the body $\left(f_{BP}\right)_B$ of $f_{BP}$ is invertible in the classical Wiener algebra $\mathcal{W}^p$;
\item[(iv)]  $\left(f_{BP}\right)_B(t)\neq 0$ for every $t$.
\end{itemize}
\label{wiener-levy}
\end{theorem}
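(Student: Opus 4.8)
The plan is to prove the equivalence by establishing a cycle of implications, using the already-developed machinery so that most of the work is reduced to the classical Wiener--Lévy theorem applied to the body, together with the Bochner--Phillips lemma just stated. First I would note that (iii) $\Leftrightarrow$ (iv) is exactly the classical Wiener--Lévy theorem for $\mathbb{C}^{p\times p}$-valued functions in $\mathcal{W}^p$: a function in the classical Wiener algebra is invertible there if and only if it is pointwise invertible on the unit circle. Next, the equivalence (ii) $\Leftrightarrow$ (iv) is where the Grassmann structure enters: by Lemma \ref{BP-lemma}, $f_{BP}$ is invertible in $\mathcal{W}_{BP}^p$ if and only if $f_{BP}(t)$ is invertible in $\left(\overline{\Lambda}^{(1)}\right)^{p\times p}$ for every $t$; and by the matrix invertibility corollary (any matrix over $\overline{\Lambda}^{(1)}$ is invertible iff its body is invertible), the latter holds if and only if $(f_{BP})_B(t)$ is an invertible complex matrix for every $t$, which is precisely (iv). So (ii), (iii), (iv) are all equivalent, leaving the link to (i).

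For (i) $\Leftrightarrow$ (ii), the point is that the map $f\mapsto f_{BP}$, $f(z)\mapsto f(e^{it})$, intertwines the star product on $\mathcal{W}_G^p$ with the ordinary product on $\mathcal{W}_{BP}^p$ (since substituting $z = e^{it}$ turns the Cauchy product on coefficients into the pointwise product of Fourier series), and the condition $\Vert[f,f]\Vert_1 < \infty$ matches the summability condition defining $\mathcal{W}_{BP}^p$. Thus if $f$ has a star-inverse $g$ in $\mathcal{W}_G^p$, then $g_{BP}$ is an ordinary inverse of $f_{BP}$ in $\mathcal{W}_{BP}^p$, giving (i) $\Rightarrow$ (ii). Conversely, if $f_{BP}$ is invertible in $\mathcal{W}_{BP}^p$ with inverse $h(t) = \sum_n e^{int} h_n$, one recovers a candidate star-inverse of $f$ by reading off the coefficients $h_n$ and forming $g(z) = \sum_n z^n h_n$; one then checks $f \star g = g \star f = I$ by comparing coefficient-wise with $f_{BP} h = h f_{BP} = I$, and verifies $\Vert[g,g]\Vert_1 = \sum_n \Vert h_n\Vert_1 < \infty$ so that indeed $g \in \mathcal{W}_G^p$. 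This closes the cycle.

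I expect the main obstacle to be the careful bookkeeping in (ii) $\Rightarrow$ (i): one must make sure that the coefficients of the $\mathcal{W}_{BP}^p$-inverse, which a priori only satisfy an $\ell^1$-summability of $\Vert h_n\Vert_1$, actually define a legitimate element of $\mathcal{W}_G^p$ under its defining condition $\Vert[g,g]\Vert_1 < \infty$, and that the star-product identities hold without any convergence pathology (this is where the submultiplicativity \eqref{1norm-ineq} of the $1$-norm and the Banach algebra structure of $\overline{\Lambda}^{(1)}$ are used to justify rearrangements). A secondary subtlety is that Lemma \ref{BP-lemma} as stated gives invertibility in $\mathcal{W}_{BP}^p$ from pointwise invertibility in $\overline{\Lambda}^{(1)}$; one should confirm that the notion of ``invertible'' there is two-sided, which it is since $\overline{\Lambda}^{(1)}$ is a (non-commutative but unital) Banach algebra and the Bochner--Phillips argument produces a genuine two-sided inverse. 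Everything else is a direct appeal to results already proved in Sections \ref{sec2} and \ref{sec3} and to the classical Wiener--Lévy theorem.
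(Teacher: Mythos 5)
Your proposal is correct and follows essentially the same route as the paper: the classical Wiener--L\'evy theorem for (iii)$\Leftrightarrow$(iv), Lemma \ref{BP-lemma} together with the body-invertibility characterization for the Grassmann link, and the substitution $z=e^{it}$ with coefficient extraction $g(z)=\sum_n z^n g_n$ for (i)$\Leftrightarrow$(ii). The only differences are cosmetic (you route (ii) through (iv) rather than through (iii), and you spell out the membership check $g\in\mathcal{W}_G^p$, which the paper leaves implicit).
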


\begin{proof}
We divide the proof into a number of steps.\\

STEP 1: {\sl (i) is equivalent to (ii).}\smallskip

Observe that if (i) holds, there exists a function $g\in\mathcal{W}_G^p$ such that $f\star g(z)=1$ for every $z$. In particular, this is valid for $z=e^{it}$ for every $t\in\mathbb{R}$, i.e., $f_{BP}g_{BP}(t)=1$ for every $t$, where $g_{BP}\equiv g(e^{it})$. Conversely, if (ii) is true, there exists
\[
g_{BP} = \sum_{n\in\mathbb{Z}} e^{int} g_n \in\mathcal{W}_{BP}^p
\]
such that $f_{BP}g_{BP}=1$. Moreover, we have
\[
\sum_{u\in\mathbb{Z}} f_u g_{n-u} =
\left\{
\begin{array}{l}
0, \qquad n\neq0 \\
1, \qquad n=0
\end{array}
\right..
\]
Then, consider the natural ``extension'' of $g_{BP}$ to $\mathcal{W}_G$
\[
g = \sum_{n\in\mathbb{Z}} z^n g_n,
\]
where $z\in\overline{\Lambda}^{(1)}$, $z_B\neq 0$. Clearly, $f\star g=1$.\\

STEP 2: {\sl (ii) is equivalent to (iii).}\smallskip

To see that (ii) implies (iii), just note that $f_{BP}g_{BP}=1\Rightarrow \left(f_{BP}\right)_B\left(g_{BP}\right)_B=1$ and $\left(f_{BP}\right)_B, \left(g_{BP}\right)_B\in\mathcal{W}^p$. For the converse, we assume (iii) holds, which implies that $f_{BP}$ has an inverse in $\overline{\Lambda}^{(1)}$. By Lemma \ref{BP-lemma}, we conclude that $f_{BP}$ is invertible in $\mathcal{W}_{BP}^p$.\\

STEP 3: {\sl (iii) is equivalent to (iv).}\smallskip

This is just the classical Wiener-L\'{e}vy theorem.
\end{proof}

Similarly to the classical case, we also define
\[
{\mathcal{W}_G^p}_+ = \left\{ f\in\mathcal{W}_G^p \suchthat f_n = 0, n<0 \right\}
\]
and
\[
{\mathcal{W}_G^p}_- = \left\{ f\in\mathcal{W}_G^p \suchthat f_n = 0, n>0 \right\}.
\]

\begin{remark}
The Wiener algebra ${\mathcal{W}_G^p}_+$ coincides with $\Gamma^p$.
\end{remark}

A weak condition for invertibility of a function in ${\mathcal{W}_G^p}_+$ is presented in the following lemma.

\begin{lemma}
Let $f\in{\mathcal{W}_G^p}_+$. Thus $f$ has an inverse in ${\mathcal{W}_G^p}_+$ if and only if $\left[f(z)\right]_B \neq 0$ for every $z$ such that $\Vert z\Vert_1 \leq 1$.
\end{lemma}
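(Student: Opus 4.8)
The plan is to push the problem down to the body and then invoke the classical Wiener--L\'evy theory. Write $f(z)=\sum_{n\ge 0}z^nf_n$ with $f_n\in\left(\overline{\Lambda}^{(1)}\right)^{p\times p}$, and let $f_B(\lambda)=\sum_{n\ge 0}\lambda^n(f_n)_B$ be the associated $\mathbb{C}^{p\times p}$-valued body function. Two elementary remarks organize the argument. First, since the body of a product is the product of the bodies and the body of $z^n$ equals $z_B^n$, one has $\left[f(z)\right]_B=f_B(z_B)$; and since $|z_B|\le\Vert z\Vert_1$ while every $\lambda\in\overline{\mathbb{D}}$ is attained by the purely-bodied element $z=\lambda$, the hypothesis ``$\left[f(z)\right]_B\neq 0$ for every $z$ with $\Vert z\Vert_1\le 1$'' means exactly that $f_B(\lambda)$ is invertible for every $\lambda$ in the closed unit disc $\overline{\mathbb{D}}$. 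Second, the coefficient-wise body map $f\mapsto f_B$ is an algebra homomorphism for the $\star$-product, because $\bigl(\sum_u f_ug_{n-u}\bigr)_B=\sum_u(f_u)_B(g_{n-u})_B$; its kernel $\mathcal{J}$ is the two-sided ideal of series all of whose coefficients have vanishing body.

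For the ``only if'' direction, let $f\star g=g\star f=1$ with $g\in{\mathcal{W}_G^p}_+$. Applying the body homomorphism gives $f_Bg_B=g_Bf_B=1$ as power series, hence $f_B(\lambda)g_B(\lambda)=1$ for $\lambda\in\mathbb{D}$, so $f_B(\lambda)$ is invertible there; the extension to the boundary is classical, handled as in Step~3 of the proof of Theorem~\ref{wiener-levy}. By the first remark this is precisely the assertion that $\left[f(z)\right]_B\neq 0$ for every $z$ with $\Vert z\Vert_1\le 1$.

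For the ``if'' direction, assume $f_B(\lambda)$ is invertible for every $\lambda\in\overline{\mathbb{D}}$. By the classical theory of the disc (half-)Wiener algebra, $f_B$ then has an inverse $g_B=\sum_n\lambda^n(g_B)_n$ whose coefficients are again square-summable, so that $\tilde g:=\sum_n z^n(g_B)_n$ and $\tilde f:=\sum_n z^n(f_n)_B$ both lie in ${\mathcal{W}_G^p}_+$, with $\tilde f\star\tilde g=\tilde g\star\tilde f=1$. Writing $f=\tilde f+(f-\tilde f)$, where $f-\tilde f\in\mathcal{J}$, one obtains $f=\tilde f\star(1+h)$ with $h:=\tilde g\star(f-\tilde f)\in\mathcal{J}$, and it remains to invert $1+h$. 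This is the step that uses the Grassmann structure: exactly as in Theorem~\ref{inv} and Corollary~\ref{ps-convergence-matrix}, purely-soul data behaves quasi-nilpotently, so the Neumann-type series $\sum_{k\ge 0}(-h)^{\star k}$ should converge in ${\mathcal{W}_G^p}_+$, and then $g:=\bigl(1+h\bigr)^{-\star}\star\tilde g$ satisfies $f\star g=g\star f=1$. Note that $f$ is automatically invertible as a \emph{formal} $\star$-series, since $f_0$ is invertible in $\left(\overline{\Lambda}^{(1)}\right)^{p\times p}$ as soon as $(f_0)_B=f_B(0)\neq 0$; the content of the lemma is therefore that the formal inverse genuinely belongs to the Wiener--Grassmann algebra.

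I expect the delicate step to be this last lifting: showing that $1+\mathcal{J}$ consists of units of ${\mathcal{W}_G^p}_+$ requires upgrading the pointwise quasi-nilpotence of souls into convergence of $\star$-powers in the $\Vert[\cdot,\cdot]\Vert_1$-topology, which goes beyond the coefficient-by-coefficient statements of Theorem~\ref{inv} and Corollary~\ref{ps-convergence-matrix}. A secondary point needing care is the boundary circle $|z_B|=1$: one must pin down the regularity of $f_B$ there that makes the hypothesis meaningful and that, through the classical theorem, yields a body inverse $g_B$ with square-summable coefficients.
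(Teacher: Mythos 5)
Your ``only if'' direction is essentially the paper's: restrict the identity $f\star g=1$ to bodies. The substantive issue is the converse, where your route diverges from the paper's and, as you yourself flag, has a genuine gap at its decisive step. You factor $f=\tilde f\star(1+h)$ with $h$ in the body-kernel ideal $\mathcal{J}$ and propose to invert $1+h$ by the Neumann series $\sum_{k\ge 0}(-h)^{\star k}$; but neither Theorem \ref{inv} nor Corollary \ref{ps-convergence-matrix} delivers this. Those results express quasi-nilpotence of a single soul element of $\overline{\Lambda}^{(1)}$ (or of one supermatrix), whereas here $h$ is a whole power series whose $\star$-powers mix all coefficients through convolution; convergence of the Neumann series in the $\Vert[\cdot,\cdot]\Vert_1$ topology is precisely the analytic content that must be proved, and nothing you cite supplies it. What comes cheaply is only the formal, coefficient-by-coefficient inverse (your own observation), and the lemma is exactly the claim that this formal inverse lies in the algebra.

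The paper does not attempt that lifting. Its converse goes through Theorem \ref{wiener-levy}, whose proof rests on the Bochner--Phillips result (Lemma \ref{BP-lemma}): from invertibility of the body on the closed disc one obtains an inverse $g$ of $f$ in the two-sided algebra $\mathcal{W}_G^p$, and the classical Wiener--L\'evy theorem for $f_B$ is then used only to conclude $(g_n)_B=0$ for $n<0$. In other words, the paper settles for a weaker conclusion --- which is why the surrounding text calls this a ``weak'' condition and explicitly leaves the ``strong'' one-sided version open --- while your plan aims to produce the inverse genuinely inside ${\mathcal{W}_G^p}_+$, i.e.\ at the strong statement the authors say they do not have. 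To close your argument you would need a new ingredient, for instance a one-sided Bochner--Phillips-type theorem for ${\mathcal{W}_G^p}_+$ or a norm estimate showing that $1+\mathcal{J}$ consists of units of ${\mathcal{W}_G^p}_+$; the pointwise quasi-nilpotence facts do not suffice. Your secondary worry is also legitimate: since membership in $\mathcal{W}_G^p$ is an $\ell^2$-type condition on the coefficients, pointwise evaluation at $\Vert z\Vert_1=1$, and hence the appeal to the classical disc result for $f_B$, needs more care than either your sketch or, for that matter, the paper's own proof provides.
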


\begin{proof}
Assume $f$ is invertible in ${\mathcal{W}_G^p}_+$. Then, there exists $g\in{\mathcal{W}_G^p}_+$ such that
\[
\left(\sum_{m=0}^\infty z^m f_m\right) \star \left(\sum_{n=0}^\infty z^n g_n\right) = 1
\]
fore every $z\in\overline{\Lambda}^{(1)}$. In particular, restricting the above expression to its body,
\[
\left(\sum_{m=0}^\infty z^m_B \left(f_m\right)_B\right) \left(\sum_{n=0}^\infty z^n_B \left(g_n\right)_B\right) = 1,
\]
i.e., $\left[f(z)\right]_B \neq 0$.

Conversely, assume $f\in{\mathcal{W}_G^p}_+$ and $\left[f(z)\right]_B \neq 0$ for every $z$ such that $\Vert z\Vert_1 \leq 1$. In particular, for $z=\xi\in\mathbb{C}$ with $|\xi|\leq 1$, $f_B(\xi)$ is an invertible element of $\mathcal{W}_+^p$. Hence, by Theorem \ref{wiener-levy}, $f$ is invertible in $\mathcal{W}_G^p$. Let $g\in\mathcal{W}_G^p$ be its inverse. Therefore,
\[
\left(\sum_{m=0}^\infty z^m f_m\right) \star \left(\sum_{n\in\mathbb{Z}} z^n g_n\right) = 1.
\]
Moreover, using again the fact that $f_B(\xi)$ is invertible in $\mathcal{W}_+^p$, we conclude that $\left(g_n\right)_B=0$ for every $n<0$.
\end{proof}

An analogous result to the above lemma holds for invertibility in ${\mathcal{W}_G^p}_-$. A strong version of the lemma, if it exists, should be the direct analogous of the classical result, i.e., it should characterize functions in ${\mathcal{W}_G^p}_+$ that have inverse in ${\mathcal{W}_G^p}_+$.

\section{Reproducing kernel Banach modules and interpolation}
\setcounter{equation}{0}
\label{sec-rkpm}

A space -- or module -- is said to admit a reproducing kernel if there exists a positive definite function $K(z,w)$ and a form $\left[\cdot,\cdot\right]$ such that every function in such a set can be pointwise evaluated as
\begin{equation}
f(z) = \left[f(\cdot),K(\cdot,z)\right].
\label{complex-ps}
\end{equation}
In the classical case, the form for a space of power series $f=\sum_{n=0}^\infty z^n a_n$, with $a_n\in\mathbb{C}$ and $\sum_{n=0}^\infty |a_n|^2<\infty$, is the a map into $\mathbb{C}$ and coincides with the usual inner product
\begin{equation}
\left[f,g\right]=\sum_{n=0}^\infty \overline{b}_n a_n,
\label{complex-form}
\end{equation}
where $g=\sum_{n=0}^\infty z^n b_n$, with $b_n\in\mathbb{C}$. Note that with respect to this product such a space of power series is a complex Banach algebra. Moreover, a similar definition could be given for matrix-valued coefficient $a_n$ of $f$.\smallskip

If one wants to replace the complex coefficients in \eqref{complex-ps} by elements in the Grassmannian setting, then it becomes clear that one type of power series of interest is the one defined by the Wiener algebra ${\mathcal{W}_G^p}_+$ studied in Section \ref{sec-wiener}.\smallskip

Since a form of the type \eqref{complex-form} is necessary, besides the star product, we also endow this set with the form defined in \eqref{def-form}. We define the notion of orthogonality in ${\mathcal{W}_G^p}_+$ according to such a form, i.e., we say $f\in{\mathcal{W}_G^p}_+$ is perpendicular to $g\in{\mathcal{W}_G^p}_+$ if $[f,g]=0$.\smallskip

Observe that if
\begin{equation}
K(z,w)=\sum_{n=0}^\infty z^n(w^\dag)^n,
\label{kernel}
\end{equation}
then the point evaluation of a function $f\in{\mathcal{W}_G}_+$ is
\[
f(z)=\left[f(\cdot),K(\cdot,z)\right].
\]

In this section, we are interested in solving an interpolation problem. In order to find all solutions of such a problem, we will need to take into consideration a particular reproducing kernel subset of rational functions of ${\mathcal{W}_G^p}_+$. For this, we need first to introduce the notion of linear independence. Since ${\mathcal{W}_G^p}_+$ is a module -- and not a linear space -- such a notion is delicate. Because of it, we present the following proposition.

\begin{proposition}
Consider $G=\{f_1,\hdots,f_N\}\subset\mathcal W^p_{G_+}$ such that $(f_1)_B,\hdots,(f_N)_B$ are linearly independent. Thus $G$ is a basis for $\text{span}(G)$.
\end{proposition}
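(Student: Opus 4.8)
The plan is to show the two defining properties of a basis, namely that $G$ spans $\text{span}(G)$ (which is trivial by definition) and that $G$ is linearly independent in the appropriate module-theoretic sense: if $\sum_{j=1}^N f_j c_j = 0$ with $c_j \in \overline{\Lambda}^{(1)}$, then every $c_j = 0$. (Since $\mathcal{W}^p_{G_+}$-linear combinations are taken with coefficients on one side, I will fix, say, right multiplication by scalars $c_j \in \overline{\Lambda}^{(1)}$; the left-sided case is identical.) So the real content is the implication $\sum_{j} f_j c_j = 0 \Rightarrow c_j = 0$ for all $j$.

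First I would pass to the body. Applying the body map (which is an algebra homomorphism $\overline{\Lambda}^{(1)} \to \mathbb{C}$ on coefficients, and commutes with the power-series structure of $\mathcal{W}^p_{G_+}$) to the relation $\sum_j f_j c_j = 0$ yields $\sum_j (f_j)_B \, (c_j)_B = 0$ in the classical Wiener algebra $\mathcal{W}^p_+$. Since $(f_1)_B, \ldots, (f_N)_B$ are linearly independent over $\mathbb{C}$ by hypothesis, this forces $(c_j)_B = 0$ for every $j$. The next step is an induction on the ``soul degree.'' Write each $c_j = (c_j)_B + (c_j)_S$; having shown the bodies vanish, consider the lowest-degree homogeneous component appearing among the souls $(c_j)_S$. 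Peeling off the terms of a given Grassmann degree $k$ from the identity $\sum_j f_j c_j = 0$ and using that the bodies $(f_j)_B$ are linearly independent, one concludes inductively that the degree-$k$ part of each $c_j$ vanishes as well. More precisely: suppose all homogeneous components of the $c_j$ of Grassmann degree $< k$ have been shown to vanish; then the degree-$k$ part of $\sum_j f_j c_j$ reduces (modulo already-killed lower-order contributions, which is where the induction hypothesis enters) to $\sum_j (f_j)_B \cdot (\text{degree-}k\text{ part of } c_j) = 0$, coefficient-wise in the Wiener variable, and linear independence of the $(f_j)_B$ over $\mathbb{C}$ kills each one.

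The main obstacle is bookkeeping in that inductive step: in $\sum_j f_j c_j$ the product mixes the soul of $f_j$ with the body and soul of $c_j$, so the degree-$k$ component of the product is not simply $\sum_j (f_j)_B \cdot [c_j]_k$ but also contains cross terms $[f_j]_{k-\ell} \cdot [c_j]_\ell$ with $\ell < k$. The point is that by the induction hypothesis each such $[c_j]_\ell$ with $\ell < k$ is already zero, so those cross terms drop out and only the body-times-$[c_j]_k$ term survives. I would organize this cleanly by filtering $\overline{\Lambda}^{(1)}$ by the nilpotent-type filtration $\mathcal{F}_k = \overline{\Lambda^{(1)}_{\ge k}}$ (spanned by $i_\alpha$ with $|\alpha| \ge k$) and arguing degree by degree in the associated graded. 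A technical remark worth including: since the $f_j \in \mathcal{W}^p_{G_+}$ are power series, all identities should be read coefficient-by-coefficient in the Wiener variable $z$, and the whole argument applies simultaneously to each Wiener coefficient — this is legitimate because the hypothesis ``$(f_1)_B,\dots,(f_N)_B$ linearly independent'' is a statement about functions in $\mathcal{W}^p_+$, i.e. it already incorporates the $z$-dependence. With these observations the proof is essentially the body-reduction philosophy already used repeatedly in the paper (e.g. Propositions \ref{invertibility-lambda}, \ref{body-reduction-positive} and Theorem \ref{inv}), combined with one layer of induction on Grassmann degree.
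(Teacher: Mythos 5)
Your proposal is correct and follows essentially the same route as the paper's proof: reduce to the body to kill the $(c_j)_B$, then look at the lowest Grassmann degree present in the $c_j$, where only body-times-lowest-degree terms survive and $\mathbb{C}$-linear independence of the $(f_j)_B$ forces those components to vanish. The paper phrases this as a minimal-degree contradiction rather than your induction on degree, but the two are equivalent formulations of the same argument.
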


\begin{proof}
We only need to prove that if
\begin{equation}
f_1 c_1 + \hdots + f_N c_N = 0,
\label{linear-combination}
\end{equation}
where $c_j\in\overline{\Lambda}^{(1)}$, $j=1,\hdots,N$, then $c_1=c_2=\hdots=c_N=0$. We show it by contradiction. Thus assume $f_1,\hdots,f_N$ are such that their body are linearly independent, and equation \eqref{linear-combination} is satisfied by coefficients $c_j\neq 0$. We start observing that the restriction of equation \eqref{linear-combination} to its body implies that the body of every $c_j$ must be null. Moreover, if we write
\[
c_j=\sum_{\alpha\in\mathfrak{I}} {c_j}_\alpha i_\alpha,
\]
let $m_j$ be the minimum value of $|\alpha|$ for which ${c_j}_\alpha\neq0$. Also, let $m=\min\{m_1,m_2,\hdots,m_N\}$. Hence, for every $\alpha$ such that $|\alpha|=m$, equation \eqref{linear-combination} implies that
\[
f_1 {c_1}_\alpha + \hdots + f_N {c_N}_\alpha = 0.
\]
Since at least one ${c_j}_\alpha$ is not null, it contradicts the fact that $f_1,\hdots,f_N$ are linearly independent. Therefore, equation \eqref{linear-combination} can only be satisfied if $c_1=c_2=\hdots=c_N=0$.
\end{proof}

Now, introduce the function $\Theta\in{\mathcal{W}_G^p}_+$, which is a fundamental object in our study and is given by
\begin{equation}
\Theta(z)=I_p - (1-z)C \star (I_q-z A)^{-\star} P^{-1} (I_q-A)^{-*} C^*J,
\label{theta}
\end{equation}
where $(C,A)\in\left(\overline{\Lambda}^{(1)}\right)^{p\times q}\times\left(\overline{\Lambda}^{(1)}\right)^{q\times q}$ is an observable pair, $J\in\left(\overline{\Lambda}^{(1)}\right)^{p\times p}$ is a signature matrix, and $P\in\left(\overline{\Lambda}^{(1)}\right)^{q\times q}$ is an invertible self-adjoint matrix.

\begin{remark}{\rm
A similar definition of $\Gamma^{p\times q}$ can be given for functions $F$ with the powers of $z$ on the right-hand side of the coefficients $f_n$. We call this space $\overline{\Gamma}^{p\times q}$. In such a space, the star product has to be replaced by the corresponding product
\begin{equation}
\left(\sum_{n=0}^\infty f_nz^n\right)\star_r \left(\sum_{n=0}^\infty g_nz^n\right) \equiv \sum_{n=0}^\infty\left(\sum_{u=0}^n f_ug_{n-u}\right)z^n.
\label{right-star-product}
\end{equation}
}
\end{remark}

\begin{proposition}
With the aforedefined $\Theta$, the equation
\[
\sum_{n=0}^\infty z^n\left(J-\Theta(z) J\Theta(w)^*\right)\left(w^\dag\right)^n = C\star(I_q-zA)^{-\star}P^{-1}\left[(I_q-wA)^*\right]^{-\star_r}\star_r C^*
\]
holds if and only if
\[
P-A^*PA=C^*JC.
\]
\label{rk-formula}
\end{proposition}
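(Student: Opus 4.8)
The plan is to establish the kernel identity by a direct computation, treating it as the Grassmann-algebra analogue of the classical Potapov-style identity for reproducing kernels of $J$-unitary rational functions. First I would substitute the defining formula \eqref{theta} for $\Theta$ into the left-hand side and expand $J - \Theta(z)J\Theta(w)^*$. Writing $\Theta(z) = I_p - (1-z)\,C \star (I_q - zA)^{-\star} P^{-1}(I_q-A)^{-*}C^*J$ and using $J^2 = I_p$ and $J^* = J$, the product $\Theta(z)J\Theta(w)^*$ produces four terms: $J$, a term linear in the "$\Theta(z)$ correction", a term linear in the "$\Theta(w)^*$ correction", and a bilinear cross term. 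The leading $J$ cancels against the $J$ on the outside, so one is left with three terms, and the whole claim reduces to showing that, after summing against $\sum_{n\ge 0} z^n(\cdot)(w^\dag)^n$, these three terms collapse to the single right-hand side expression $C\star (I_q - zA)^{-\star}P^{-1}[(I_q-wA)^*]^{-\star_r}\star_r C^*$.

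The key algebraic device is the standard resolvent identity in the form: for the geometric series $\sum_{n\ge 0} z^n M (w^\dag)^n$ one has, after inserting $M = P - A^*PA - C^*JC$ or rearrangements thereof, a telescoping that converts powers of $z$ and $w^\dag$ into the resolvents $(I_q - zA)^{-\star}$ and $[(I_q - wA)^*]^{-\star_r}$. Concretely, I would use the identity
\[
\sum_{n=0}^\infty z^n (w^\dag)^n = (I - z A)^{-\star}\big[(I - zA) \, \square \, (I - wA)^* + \text{cross terms}\big][(I-wA)^*]^{-\star_r},
\]
where the cross terms are precisely what is governed by the hypothesis $P - A^*PA = C^*JC$. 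The cleanest route is probably to first verify the identity with $z = \lambda$ and $w = \mu$ complex variables (so everything commutes appropriately and one can invoke the classical computation for de Branges–Rovnyak / $J$-unitary kernels), and then extend to $\overline{\Lambda}^{(1)}$-valued $z, w$ by replacing pointwise products with $\star$ and $\star_r$ products, exactly as was done repeatedly in Section \ref{ratio}. I would verify that the hypothesis $P - A^*PA = C^*JC$ is used in exactly the one place where the cross terms must cancel, and that conversely, if the identity holds for all $z$ (in particular reading off the coefficient of $z w^\dag$, or evaluating at $z = w = 0$ after one application of $R_0$), one recovers $P - A^*PA = C^*JC$; this gives the "only if" direction.

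The main obstacle I anticipate is bookkeeping the noncommutativity: because the coefficients lie in $\overline{\Lambda}^{(1)}$ and appear on both sides of the powers of $z$ (left $\star$-product versus right $\star_r$-product), one has to be careful that factors like $P^{-1}$, $(I_q-A)^{-*}$, $C$, $C^*$, $J$ sit on the correct side and that the "conjugate-transpose of a $\star$-product is the $\star_r$-product of the conjugate-transposes" rule is applied consistently. A second, smaller obstacle is justifying convergence of all the series in $\overline{\Lambda}^{(1)}$ so that the formal manipulations are legitimate; here I would appeal to Theorem \ref{ps-convergence} and Corollary \ref{f-extension}, which guarantee that the relevant resolvents $(I_q - zA)^{-\star}$ converge for $z$ with sufficiently small (or, since $A$ has entries in $\Lambda$ and we may restrict to $z$ near $0$, appropriate) body, so the identity is an honest identity of convergent series rather than merely of formal power series.
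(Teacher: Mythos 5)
Your plan follows the paper's own route: expand $J-\Theta(z)J\Theta(w)^*$ directly from \eqref{theta}, note that after summing against $\sum_{n\ge 0} z^n(\cdot)(w^\dag)^n$ the terms collapse to the resolvent expression exactly when $P-A^*PA=C^*JC$, doing the computation first for complex $z=\lambda$, $w=\omega$ and then extending to $\overline{\Lambda}^{(1)}$-valued variables by replacing pointwise products with $\star$ and $\star_r$. The paper's proof is precisely this, organized by factoring the difference as $\alpha(\lambda)\left[(1-\lambda\overline{\omega})\beta+(1-\lambda)(1-\overline{\omega})(P-A^*PA-C^*JC)\right]\alpha(\omega)^*$, so your approach is essentially the same.
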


\begin{proof}
We first consider $z=\lambda$ and $w=\omega$ in $\mathbb{C}$. Writing $\alpha(\lambda)=C(I_m-\lambda A)^{-1}P^{-1}(I_m-A)^{-*}$ and $\beta=(I_q-A)^*P(I_q-A)$, observe that
\begin{eqnarray*}
J-\Theta(\lambda) J\Theta(\omega)^*\hspace{-2mm} & = &\hspace{-2mm} J-\left[I_p - (1-\lambda)C(I_q-\lambda A)^{-1} P^{-1} (I_q-A)^{-*} C^*J\right]J \\
     &&\hspace{15mm} \left[I_p - (1-\overline{\omega})JC(I_q-A)^{-1} P^{-1}\left[(I_q-\omega A)^*\right]^{-1} C^*\right] \\
     & = &\hspace{-2mm} \alpha(\lambda)\left[\beta-\lambda\beta\overline{\omega}+(1-\lambda)(1-\overline{\omega})(P-A^*PA-C^*JC)\right]\alpha(\omega)^*,
\end{eqnarray*}
which proves the proposition for $z=\lambda$. For an arbitrary $z\in\overline{\Lambda}^{(1)}$, the above calculation follows in a similar way.
\end{proof}
We, then, assume hereby that $P-A^*PA=C^*JC$. \smallskip

With the above definitions, we now want to solve the following interpolation problem.

\begin{problem}
Given an observable pair $(C,A)\in\left(\overline{\Lambda}^{(1)}\right)^{p\times q}\times \left(\overline{\Lambda}^{(1)}\right)^{q\times q}$ such that $C^*C=P-A^*PA$, we want to find every $f$ that satisfies
\begin{equation}
(C^*\star F)(A^*)=X.
\label{interpolation-problem}
\end{equation}
\end{problem}

First, we prove the following proposition, which presents a particular solution of the Problem \ref{interpolation-problem}.

\begin{proposition}
The function
\[
F_{min} = C\star(I_q-zA)^{-\star}P^{-1}X=\sum_{n=0}^\infty z^n CA^nP^{-1}X
\]
solves equation \eqref{interpolation-problem}.
\label{particular-solution}
\end{proposition}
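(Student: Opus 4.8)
The plan is to reduce the statement to a single telescoping identity for the observable pair $(C,A)$. First I would read off the Taylor coefficients of $F_{min}$: expanding $(I_q-zA)^{-\star}=\sum_{n\ge 0}z^nA^n$ gives $F_{min}(z)=\sum_{n=0}^\infty z^n\,CA^nP^{-1}X$, so its $n$-th coefficient is $CA^nP^{-1}X$. Since $C^*$ is a constant supermatrix, the star product $C^*\star F_{min}$ is simply left multiplication of each coefficient by $C^*$, i.e. $(C^*\star F_{min})(z)=\sum_{n=0}^\infty z^n\,C^*CA^nP^{-1}X$. Substituting the supermatrix $A^*$ for the (left) variable $z$ then yields
\[
(C^*\star F_{min})(A^*)=\sum_{n=0}^\infty (A^*)^n\,C^*C\,A^n\,P^{-1}X,
\]
where all the rearrangements are legitimate because the series converge absolutely in the Banach algebra $\overline{\Lambda}^{(1)}$, thanks to the submultiplicativity \eqref{1norm-ineq}.

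It therefore suffices to prove the identity $S:=\sum_{n=0}^\infty (A^*)^nC^*CA^n=P$, since then the displayed sum equals $SP^{-1}X=X$, which is precisely \eqref{interpolation-problem}. To obtain $S=P$ I would split off the $n=0$ term and re-index the remainder, getting the fixed-point relation $S=C^*C+A^*SA$; feeding in the standing hypothesis $C^*C=P-A^*PA$ turns this into $S-P=A^*(S-P)A$, and iterating gives $S-P=(A^*)^N(S-P)A^N$ for every $N\ge 1$. Letting $N\to\infty$, the right-hand side tends to $0$, whence $S=P$. This last step is where the hypotheses of the section enter: one needs $\Vert A^N\Vert_1\to 0$, i.e. that $A$ has spectral radius less than $1$ — equivalently, that $F_{min}$ genuinely lies in ${\mathcal{W}_G^p}_+$ and that $(C^*\star F_{min})(A^*)$ converges.

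I expect the only genuine obstacle to be this analytic point — justifying the interchange of the infinite summation with the left point evaluation at $A^*$ and, above all, the vanishing of the tail term $(A^*)^N(S-P)A^N$ — which rests on the quasi-nilpotency behind Theorem \ref{inv}, the convergence of realizations, and the norm inequality \eqref{1norm-ineq}. Everything else is the elementary telescoping computation above, so the write-up should be short.
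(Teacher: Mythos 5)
Your proof is correct and takes essentially the same route as the paper: the paper's argument is exactly the computation $(C^*\star F_{min})(A^*)=\sum_{n\ge 0}(A^*)^nC^*CA^nP^{-1}X=PP^{-1}X=X$, with the identity $\sum_{n\ge 0}(A^*)^nC^*CA^n=P$ read off directly from the Stein equation $C^*C=P-A^*PA$. Your telescoping derivation of that identity and the convergence caveat ($\Vert A^N\Vert_1\to 0$, i.e.\ body spectral radius of $A$ below $1$) simply make explicit what the paper leaves implicit.
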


\begin{proof}
This follows from a simple computation. In fact,
\[
(C^*\star F_{min})(A^*)=\sum_{n=0}^\infty (A^*)^n C^*CA^nP^{-1}X=PP^{-1}X=X.
\]
\end{proof}

With a particular solution, we now start looking for a general solution. Note that if $F$ is a different solution, then $G=F-F_{min}$ satisfies the homogeneous problem, i.e.,
\begin{equation}
(C^*\star G)(A^*)=0.
\label{homogeneous}
\end{equation}

In view of that, we present our next result.

\begin{proposition}
A function $G\in\mathcal W_{G_+}^p$ satisfies \eqref{homogeneous} if and only if
\begin{equation}
\left[G,C\star(I-zA)^{-\star}\xi\right]=0,
\label{orthogonality}
\end{equation}
for every $\xi$ in ${\mathcal{W}_G}_+^p$.
\label{prop-orthog}
\end{proposition}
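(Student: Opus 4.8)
The plan is to unwind the definition of the homogeneous condition \eqref{homogeneous} in terms of the form $[\cdot,\cdot]$ from \eqref{def-form} and exploit observability of the pair $(C,A)$. Writing $G(z)=\sum_{n=0}^\infty z^n g_n$ with $g_n\in(\overline{\Lambda}^{(1)})^{p\times q}$, the star product gives $(C^*\star G)(z)=\sum_{n=0}^\infty z^n\left(\sum_{u=0}^n C^* g_{n-u}\right)$ when $C^*$ is viewed as a constant coefficient; more to the point, since $C^*$ is constant, $(C^*\star G)(z)=C^*G(z)$, so evaluating at $z=A^*$ in the $\star$-sense yields $(C^*\star G)(A^*)=\sum_{n=0}^\infty (A^*)^n C^* g_n$. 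Thus \eqref{homogeneous} says precisely $\sum_{n=0}^\infty (A^*)^n C^* g_n=0$.

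First I would expand the right-hand candidate. Since $(I-zA)^{-\star}=\sum_{m=0}^\infty z^m A^m$, the function $C\star(I-zA)^{-\star}\xi$ equals $\sum_{m=0}^\infty z^m C A^m \xi$ for $\xi\in(\overline{\Lambda}^{(1)})^q$ (I would treat the case of a constant vector $\xi$; the general ${\mathcal W_G}_+^p$ case reduces to this coefficientwise, using that the form acts on coefficients). Then by the definition \eqref{def-form} of the Hermitian form,
\[
\left[G,\,C\star(I-zA)^{-\star}\xi\right]=\sum_{m=0}^\infty (CA^m\xi)^* g_m=\sum_{m=0}^\infty \xi^*(A^*)^m C^* g_m=\xi^*\left(\sum_{m=0}^\infty (A^*)^m C^* g_m\right).
\]
So the orthogonality condition \eqref{orthogonality} holding for every $\xi$ is equivalent, by Proposition \ref{null-matrix} (or simply by choosing $\xi$ to run over the standard basis vectors), to $\sum_{m=0}^\infty (A^*)^m C^* g_m=0$, which is exactly \eqref{homogeneous} as rewritten above. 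This establishes the equivalence in both directions simultaneously, since every step is an identity.

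The main point requiring care is the justification of the interchange of the two infinite sums and the convergence of $\sum_{m=0}^\infty (A^*)^m C^* g_m$ in $\overline{\Lambda}^{(1)}$: here I would invoke that $A$ has entries in $\Lambda$ with nilpotent (or, in the completed setting, quasi-nilpotent) soul part, so the body of $A^*$ being a genuine complex matrix, the relevant series converge by the estimates underlying Theorem \ref{ps-convergence} and Corollary \ref{ps-convergence-matrix}, together with the submultiplicativity \eqref{1norm-ineq}, \eqref{matrix-norm}; and the fact that $G\in{\mathcal W_G}_+^p$ guarantees $\sum\Vert g_m\Vert_1<\infty$ so that Fubini-type rearrangement in $\ell_1(\mathfrak I,\mathbb C)$ is legitimate. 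I do not expect a genuine obstacle beyond this bookkeeping: observability of $(C,A)$ is, perhaps surprisingly, not needed for the equivalence as stated, only to guarantee that the $\xi$'s produce enough test functions, which the basis-vector choice already handles. One should also note that $(C^*\star G)(A^*)$ must be interpreted via the $\star$-evaluation, i.e. as $\sum_n (A^*)^n(\text{$n$-th coefficient of }C^*\star G)$, and the proof above uses exactly this reading consistently.
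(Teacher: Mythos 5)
Your proof is correct and takes essentially the same route as the paper: expand $G$ and the test functions $C\star(I-zA)^{-\star}\xi$ in coefficients and reduce both \eqref{homogeneous} and \eqref{orthogonality} to the single identity $\sum_{n=0}^\infty (A^*)^n C^* g_n=0$. You are in fact more complete than the paper's own terse argument, which only spells out the implication from \eqref{homogeneous} to \eqref{orthogonality} and leaves the converse (your choice of $\xi$ ranging over basis vectors) implicit.
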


\begin{proof}
Writing $G=\sum_{n=0}^\infty z^n g_n$, we obtain
\[
\sum_{n=0}^\infty (A^*)^n C^* g_n = 0.
\]
As a consequence, equation \eqref{orthogonality} is satisfied.
\end{proof}

Now, let $\mathcal{H}(\Theta)$ be the set of functions $f$ of the type
\[
f=C\star(I-zA)^{-\star}\xi.
\]
Note that such a space can be associated to the reproducing kernel
\[
K_{\mathcal{H}(\Theta)}(z,w)=C\star(I_q-zA)^{-\star}P^{-1}\left[(I_q-wA)^*\right]^{-\star_r}\star_r C^*,
\]
with $P-A^*PA=C^*C$. Using Proposition \ref{rk-formula}, we can rewrite it as
\begin{equation}
K_{\mathcal{H}(\Theta)}(z,w)=\sum_{n=0}^\infty z^n\left[I-\Theta(z)\Theta(w)^*\right](w^\dag)^n\xi.
\label{h-theta-kernel}
\end{equation}

\begin{proposition}
\label{propotheta}
${\mathcal{W}_G^p}_+$ can be decomposed as the direct sum
\begin{equation}
{\mathcal{W}_G^p}_+ = \Theta {\mathcal{W}_G^p}_+ \oplus \mathcal{H}(\Theta).
\label{wgplus-decomp}
\end{equation}
\end{proposition}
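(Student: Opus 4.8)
The decomposition \eqref{wgplus-decomp} is the Grassmann counterpart of the classical orthogonal decomposition of $\mathbf H_2$ associated to a Blaschke-type (here, $J$-unitary/$\Theta$) factor, where $\mathcal H(\Theta)$ plays the role of the model space. The strategy is to verify that multiplication by $\Theta$ (with respect to the star product) defines a well-defined map on ${\mathcal W_G^p}_+$ whose range is complemented by $\mathcal H(\Theta)$, and that the two summands intersect trivially. Because $\Theta$ is built from an observable pair $(C,A)$ with $P - A^*PA = C^*JC$ (the standing assumption after Proposition \ref{rk-formula}), the kernel identity \eqref{h-theta-kernel} is available and will carry most of the weight.

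First I would record that $M_\Theta : f \mapsto \Theta \star f$ maps ${\mathcal W_G^p}_+$ into itself: $\Theta \in {\mathcal W_G^p}_+ = \Gamma^p$ by construction \eqref{theta}, and ${\mathcal W_G^p}_+$ is a star-subalgebra, so this is immediate. Next, the crux is to show the orthogonality $[\Theta \star f, g] = 0$ for all $f \in {\mathcal W_G^p}_+$ and all $g \in \mathcal H(\Theta)$, together with the completeness statement that every $h \in {\mathcal W_G^p}_+$ splits as $h = \Theta \star f + k$ with $k \in \mathcal H(\Theta)$. For the orthogonality, I would use the kernel identity \eqref{h-theta-kernel}: writing $g = C\star(I-zA)^{-\star}\xi$ and computing $[\Theta\star f, g]$ via the form \eqref{def-form}, the factor $I - \Theta(z)\Theta(w)^*$ on the diagonal forces the $\Theta$-part to annihilate against $\mathcal H(\Theta)$ — equivalently, one checks that $[\Theta\star f,\, C\star(I-zA)^{-\star}\xi] = 0$ directly from $P - A^*PA = C^*JC$ and the observability of $(C,A)$, exactly the computation appearing in Proposition \ref{prop-orthog} read in the other direction. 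The completeness half I would get by defining, for given $h$, the element $k := \sum_{n\ge 0} z^n [h(\cdot), K_{\mathcal H(\Theta)}(\cdot,z)\text{-components}]$ — i.e., the ``projection'' of $h$ onto $\mathcal H(\Theta)$ using the reproducing kernel — and then verifying that $h - k$ lies in $\Theta \star {\mathcal W_G^p}_+$ because it is orthogonal to $\mathcal H(\Theta)$; that last implication is where one needs that $\Theta$ is $J$-inner enough that its star-inverse times anything orthogonal to the model space stays in ${\mathcal W_G^p}_+$.

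The honest statement is that the cleanest route passes through the body: by Proposition \ref{propo_equiv} and the general philosophy of Section \ref{sec-schur}, the decomposition \eqref{wgplus-decomp} holds for the bodies $\Theta_B$, $({\mathcal W_G^p}_+)_B$ in the classical Wiener/Hardy setting (this is the standard $\mathbf H_2 = \Theta_B \mathbf H_2 \oplus \mathcal H(\Theta_B)$ decomposition for a rational $J$-inner $\Theta_B$). I would then lift: given $h$, solve the body-level decomposition, lift the two body components to $\overline\Lambda^{(1)}$-valued pieces, and correct the soul order by order — at each order $|\alpha|$ the ``error'' is a lower-order expression already decomposed, using that $M_\Theta$ is invertible modulo $\mathcal H(\Theta)$ at the body level and that soul terms are quasi-nilpotent (Theorem \ref{inv} and Theorem \ref{ps-convergence}). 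Directness of the sum, $\Theta\star{\mathcal W_G^p}_+ \cap \mathcal H(\Theta) = \{0\}$, follows because an element of the intersection is orthogonal to itself in the form $[\cdot,\cdot]$ and superpositivity of the kernel \eqref{spd-expression}/\eqref{h-theta-kernel} (Corollary \ref{schur-spd}) forces it to be $0$ — or again, reduce to the body, where it is the classical triviality, and propagate through the soul.

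The main obstacle I anticipate is the completeness/surjectivity half: showing every $h\in{\mathcal W_G^p}_+$ actually decomposes, which amounts to knowing that $h$ orthogonal to $\mathcal H(\Theta)$ implies $h \in \Theta\star{\mathcal W_G^p}_+$. Classically this uses that $\Theta_B$ is two-sided inner on the circle so that $\Theta_B^{-1} h_B$ is again in $\mathbf H_2$; in the Grassmann setting one must check that $\Theta$ has a star-inverse in an appropriate sense on the relevant domain (here the evaluation $z z^\dagger \preceq 1$), which is where the invertibility results of Section \ref{sec-wiener}, notably Theorem \ref{wiener-levy} and the invertibility lemma in ${\mathcal W_G^p}_+$, get used. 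Once the body case is invoked and the soul is handled by the inductive-on-$|\alpha|$ argument, everything else is routine bookkeeping with the star product and the form \eqref{def-form}.
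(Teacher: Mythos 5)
There is a genuine gap, and it sits exactly where you yourself flagged it. The paper's proof does not pass through ``$h$ orthogonal to $\mathcal{H}(\Theta)$ implies $h\in\Theta\star{\mathcal{W}_G^p}_+$'', nor through a body-plus-soul induction; its single key move is to split the reproducing kernel \eqref{kernel} itself: by \eqref{h-theta-kernel},
\[
K(z,w)\xi=\sum_{n=0}^\infty z^n\bigl[\Theta(z)\Theta(w)^*\bigr](w^\dag)^n\xi + K_{\mathcal{H}(\Theta)}(z,w)\xi,
\]
where the first summand lies in $\Theta\,{\mathcal{W}_G^p}_+$, the second lies in $\mathcal{H}(\Theta)$, and the two pieces are orthogonal by Proposition \ref{prop-orthog}; the reproducing property of $K$ then transfers this splitting of the kernel to the splitting \eqref{wgplus-decomp} of the whole module. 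You use \eqref{h-theta-kernel} only for the orthogonality half, and for the completeness half you offer two sketches: a kernel projection whose crucial implication (orthogonal to $\mathcal{H}(\Theta)$ $\Rightarrow$ divisible by $\Theta$) you explicitly leave unproved, and a body-level decomposition corrected ``order by order'' in the soul. Neither is carried out, and the second is not mere bookkeeping: in $\overline{\Lambda}^{(1)}$ the soul is only quasi-nilpotent, so the induction on $|\alpha|$ produces an infinite series whose convergence in the $1$-norm and membership in ${\mathcal{W}_G^p}_+$ would have to be established. This is precisely the work that the kernel identity lets the paper bypass.

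A second, sharper defect is your argument for directness of the sum. You claim that an element of $\Theta\star{\mathcal{W}_G^p}_+\cap\mathcal{H}(\Theta)$ is ``orthogonal to itself'' and therefore zero by superpositivity. That implication fails here: the form \eqref{def-form} is degenerate over $\overline{\Lambda}^{(1)}$ — for instance the constant function $f\equiv i_1$ satisfies $[f,f]=i_1^\dag i_1=0$ while $f\neq 0$ — so $[f,f]=0$, and superpositivity of the kernel \eqref{spd-expression}, do not force $f=0$. Your fallback (reduce to the body and ``propagate through the soul'') runs into the same unproved soul induction as above. Both halves therefore need repair, and the repair closest to the paper is to derive them from the displayed kernel splitting rather than from positivity properties of the form.
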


\begin{proof}
Using equation \eqref{h-theta-kernel}, note that the kernel $K$ defined in \eqref{kernel} satisfies
\[
K(z,w)\xi=\sum_{n=0}^\infty z^n I(w^\dag)^n\xi = \sum_{n=0}^\infty z^n\left[\Theta(z)\Theta(w)^*\right](w^\dag)^n\xi + K_{\mathcal{H}(\Theta)}(z,w)\xi
\]
for every $\xi\in\left(\overline{\Lambda}^{(1)}\right)^{p\times1}$. The two terms on the right-hand side are orthogonal to each other and, moreover, the term $\sum_{n=0}^\infty z^n\left[\Theta(z)\Theta(w)^*\right](w^\dag)^n\xi$ belongs to $\Theta {\mathcal{W}_G^p}_+$ and the term $K_{\mathcal{H}(\Theta)}(z,w)\xi$ belongs to $\mathcal{H}(\Theta)$. By Proposition \ref{prop-orthog}, the two sets are orthogonal to each other. Therefore, we conclude that \eqref{wgplus-decomp} holds.
\end{proof}

\begin{proposition}
All solutions of equation \eqref{interpolation-problem} are of the form
\[
f=f_{min}+\Theta\star h,
\]
where $h$ is an arbitrary element of ${\mathcal{W}_G}_+$.
\end{proposition}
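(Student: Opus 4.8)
The plan is to reduce the description of all solutions of \eqref{interpolation-problem} to a description of the solutions of the homogeneous problem \eqref{homogeneous} and then to invoke the orthogonal decomposition of Proposition \ref{propotheta}. First I would note that the interpolation map $F\mapsto (C^*\star F)(A^*)$ is additive, so that $f$ solves \eqref{interpolation-problem} if and only if $G:=f-F_{min}$ solves \eqref{homogeneous}; here Proposition \ref{particular-solution} guarantees that $F_{min}$ is itself a solution, which is what makes this reduction work in both directions. It therefore suffices to show that the set of $G\in{\mathcal{W}_G^p}_+$ satisfying \eqref{homogeneous} coincides with $\Theta\star{\mathcal{W}_G^p}_+$.

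By Proposition \ref{prop-orthog}, $G$ satisfies \eqref{homogeneous} precisely when $[G,g]=0$ for every $g\in\mathcal{H}(\Theta)$, that is, when $G\in\mathcal{H}(\Theta)^{\perp}$ with respect to the form $[\cdot,\cdot]$. Proposition \ref{propotheta} supplies the orthogonal direct sum ${\mathcal{W}_G^p}_+=\Theta{\mathcal{W}_G^p}_+\oplus\mathcal{H}(\Theta)$. The inclusion $\Theta{\mathcal{W}_G^p}_+\subseteq\mathcal{H}(\Theta)^{\perp}$ is immediate from the orthogonality of the two summands. For the reverse inclusion, I would take $G\in\mathcal{H}(\Theta)^{\perp}$ and decompose it as $G=\Theta\star h+g$ with $h\in{\mathcal{W}_G^p}_+$ and $g\in\mathcal{H}(\Theta)$; since both $G$ and $\Theta\star h$ are orthogonal to $\mathcal{H}(\Theta)$, so is $g$, and in particular $g$ is orthogonal to $K_{\mathcal{H}(\Theta)}(\cdot,z)\xi\in\mathcal{H}(\Theta)$ for every $z$ and every $\xi$. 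The reproducing kernel property of $\mathcal{H}(\Theta)$, via \eqref{h-theta-kernel}, then gives $g(z)\xi=[g(\cdot),K_{\mathcal{H}(\Theta)}(\cdot,z)\xi]=0$, whence $g=0$ and $G=\Theta\star h$. Combining this with the additive reduction, every solution of \eqref{interpolation-problem} has the form $f=F_{min}+\Theta\star h$ with $h\in{\mathcal{W}_G^p}_+$, and conversely each such $f$ is a solution, because $\Theta\star h\in\Theta{\mathcal{W}_G^p}_+\subseteq\mathcal{H}(\Theta)^{\perp}$ solves \eqref{homogeneous} by Proposition \ref{prop-orthog}, so that $(C^*\star f)(A^*)=(C^*\star F_{min})(A^*)=X$.

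The step I expect to be the main obstacle is the module-theoretic bookkeeping in the orthogonality argument: one must make sure that the direct sum of Proposition \ref{propotheta} is genuinely internal, so that the decomposition $G=\Theta\star h+g$ is available with $h$ a bona fide element of ${\mathcal{W}_G^p}_+$, and one must justify ``$g$ orthogonal to all of $\mathcal{H}(\Theta)$ implies $g=0$'' through the reproducing kernel of $\mathcal{H}(\Theta)$ rather than by a naive definiteness argument, since the form $[\cdot,\cdot]$ is $\overline{\Lambda}^{(1)}$-valued and not scalar. By contrast, the additive reduction to \eqref{homogeneous}, the inclusion $\Theta{\mathcal{W}_G^p}_+\subseteq\mathcal{H}(\Theta)^{\perp}$, and the verification of the converse direction are routine and use no new ideas.
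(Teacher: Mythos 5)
Your argument is correct and follows the same route as the paper, whose proof simply cites Propositions \ref{particular-solution}, \ref{prop-orthog}, and \ref{propotheta} and leaves exactly the reduction-to-homogeneous-problem and orthogonal-complement bookkeeping that you spell out. The only nitpick is notational: the reproducing property of $K_{\mathcal{H}(\Theta)}$ yields $\xi^*g(z)=\left[g,K_{\mathcal{H}(\Theta)}(\cdot,z)\xi\right]$ rather than $g(z)\xi$, but the conclusion $g=0$ is unaffected.
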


\begin{proof}
This is a direct consequence of Propositions \ref{particular-solution}, \ref{orthogonality}, and \ref{propotheta}.
\end{proof}

\section{Nevanlinna-Pick interpolation}
\setcounter{equation}{0}
\label{sec-interpolation}

In this section, we study the Nevanlinna-Pick interpolation in $\overline{\Lambda}^{(1)}$. The classical complex version of this problem has already been discussed in the Introduction. Then, we go straight to the definition of the analogous problem we want to solve.

\begin{problem}
Given $N$ points $z_k$ in the open unit superdisk and $N$ values $s_k$ in $\overline{\Lambda}^{(1)}$, we want to characterize all Schur-Grassmann functions $S$ satisfying
\[
S(z_k)=s_k
\]
and such that the Pick matrix $P$ is superpositive, i.e.,
\[
P=\left(p_{jk}\right)\equiv\left(p_k(z_j;s_j)\right)\succeq0,
\]
where $p_k(z;s)$ is defined as
\[
p_k(z;s)\equiv(1-s s_k^\dag)\star(1-z z_k^\dag)^{-\star}
\]
for every $k\in\{1,\hdots,N\}$.
\label{np-interp}
\end{problem}

To solve the problem, and following the classical case (see e.g. \cite{bgr,Dym_CBMS}) 
we first consider the function $\Theta$ defined in \eqref{theta} with
\begin{equation}
\label{values}
A\equiv\left(
\begin{array}{c c c}
z_1^\dag &        &          \\
         & \ddots &          \\
         &        & z_N^\dag
\end{array}
\right), \
C\equiv\left(
\begin{array}{c c c}
1        & \cdots & 1        \\
s_1^\dag & \cdots & s_N^\dag
\end{array}
\right), \ \text{and} \
J\equiv\left(
\begin{array}{c c}
1 & 0  \\
0 & -1
\end{array}
\right).
\end{equation}
Note that $P-A^*PA=C^*JC$.

\begin{theorem}
With the above definitions,
\begin{equation}
\left(\begin{array}{c c}
1 & -s_k
\end{array}\right)
\star\Theta(z)\bigg\rvert_{z=z_k}=0
\label{interpo}
\end{equation}
for every $k\in\{1,\hdots,N\}$.
\end{theorem}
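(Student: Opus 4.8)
The plan is to reduce the left factor $\begin{pmatrix}1 & -s_k\end{pmatrix}$ to (essentially) a row of $C^{*}J$, use the Stein identity $P-A^{*}PA=C^{*}JC$ recorded after \eqref{values} together with the diagonality of $A$, and show that $\begin{pmatrix}1 & -s_k\end{pmatrix}\star\Theta(z)$, viewed as a power series in $z$, is left $\star$-divisible by $(z-z_k)$; the vanishing at $z=z_k$ is then automatic.

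First I would fix notation: let $e_1,\dots,e_N$ be the standard basis of $\left(\overline{\Lambda}^{(1)}\right)^{N\times 1}$, and write $R(z)=(I_q-zA)^{-\star}=\sum_{m\ge 0}z^{m}A^{m}$ and $N_0=P^{-1}(I_q-A)^{-*}C^{*}J$, so that $\Theta(z)=I_p-(1-z)\star C\,R(z)\,N_0$. From the explicit $C$ and $J$ of \eqref{values}, and because $\dag$ is an involution, the $k$-th column of $C$ is $Ce_k=\begin{pmatrix}1\\ s_k^{\dag}\end{pmatrix}$, hence $e_k^{*}C^{*}=\begin{pmatrix}1 & s_k\end{pmatrix}$ and
\[
\begin{pmatrix}1 & -s_k\end{pmatrix}=e_k^{*}C^{*}J .
\]
Substituting this, and using $e_k^{*}C^{*}JC=e_k^{*}(P-A^{*}PA)$ from the Stein identity, gives
\[
\begin{pmatrix}1 & -s_k\end{pmatrix}\star\Theta(z)=e_k^{*}C^{*}J-(1-z)\star\big(e_k^{*}(P-A^{*}PA)\big)\star R(z)\,N_0 .
\]

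Now the key computation, which I would first carry out for a complex variable $z=\lambda$ (where $\star$ is ordinary multiplication and the computation is the classical one) and then extend verbatim to a $\overline{\Lambda}^{(1)}$-valued $z$, exactly as is done elsewhere in the paper. Since $A=\mathrm{diag}(z_1^{\dag},\dots,z_N^{\dag})$ is diagonal one has $e_k^{*}A^{*}=z_k e_k^{*}$, and since $1-z_k$ and the diagonal entries of $I_q-A$ have nonzero body (because $|(z_j)_B|<1$ for points of the superdisk), also $e_k^{*}(I_q-A)^{-*}=(1-z_k)^{-1}e_k^{*}$; together with the resolvent identity $R(z)=I_q+zR(z)A$ these yield
\[
e_k^{*}(P-A^{*}PA)\star R(z)=e_k^{*}P+(z-z_k)\star\big(e_k^{*}P\,R(z)A\big),\qquad e_k^{*}P N_0=(1-z_k)^{-1}\begin{pmatrix}1 & -s_k\end{pmatrix}.
\]
Feeding this back and cancelling the two constant-order contributions (using that $z_k$ commutes with $(1-z_k)^{-1}$ and that $1-z$ is central for $\star$) yields the factorization
\[
\begin{pmatrix}1 & -s_k\end{pmatrix}\star\Theta(z)=(z-z_k)\star\Psi(z),\qquad \Psi(z)=(1-z_k)^{-1}\begin{pmatrix}1 & -s_k\end{pmatrix}-(1-z)\star e_k^{*}P\,R(z)A N_0 .
\]
Finally, for any power series $\Psi(z)=\sum_{n\ge 0}z^{n}\Psi_n$ one has $(z-z_k)\star\Psi(z)=\sum_{n\ge 0}z^{n}\big(\Psi_{n-1}-z_k\Psi_n\big)$ (with $\Psi_{-1}=0$), and substituting $z=z_k$ the series telescopes to $0$ because powers of the single supernumber $z_k$ commute; this gives $\begin{pmatrix}1 & -s_k\end{pmatrix}\star\Theta(z)\big|_{z=z_k}=0$ for every $k$. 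The two places that need care are the non-commutative bookkeeping — there is no scalar to factor through, so the left/right positions of $z_k$, $(1-z_k)^{-1}$, $e_k^{*}$ and the matrix factors must be tracked at every step — and the justification that $\Psi(z_k)$, hence the telescoping sum, converges in $\overline{\Lambda}^{(1)}$; the latter is exactly where membership of $z_k$ in the open superdisk is used, so that $I_q-z_kA$ is invertible and $R(z_k)$ makes sense in the domain under consideration.
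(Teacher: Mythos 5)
Your proposal is correct: every identity you rely on checks out in the paper's setting, namely $e_k^*C^*J=\begin{pmatrix}1&-s_k\end{pmatrix}$ for the $C$ and $J$ of \eqref{values}, the Stein identity $P-A^*PA=C^*JC$ recorded there, $e_k^*A^*=z_ke_k^*$ by diagonality of $A$, the resolvent identity for $(I_N-zA)^{-\star}$, the centrality of $1-z$ (complex coefficients) for the $\star$-product, and the telescoping that makes left evaluation at $z_k$ annihilate a left $\star$-factor $(z-z_k)$; the invertibility of $1-z_k$ and the convergence issues you flag are exactly what membership of $z_k$ in the open superdisk provides. Your route is, however, genuinely different from the paper's. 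The paper evaluates directly: it computes $\begin{pmatrix}1&-s_k\end{pmatrix}C=\begin{pmatrix}1-s_ks_1^\dag&\cdots&1-s_ks_N^\dag\end{pmatrix}$, recognizes $\begin{pmatrix}1&-s_k\end{pmatrix}C\star(I_N-zA)^{-\star}\big\rvert_{z=z_k}$ as the $k$-th row $P^{[k]}$ of the Pick matrix straight from the definition of the entries $p_{jk}$ in Problem \ref{np-interp}, cancels $P^{[k]}P^{-1}=I_N^{[k]}$, and finishes with $I_N^{[k]}(I_N-A)^{-*}=(1-z_k)^{-1}I_N^{[k]}$ and $I_N^{[k]}C^*J=\begin{pmatrix}1&-s_k\end{pmatrix}$. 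You never use the explicit kernel form of the entries of $P$: you use only the Stein equation to produce the left factorization $\begin{pmatrix}1&-s_k\end{pmatrix}\star\Theta(z)=(z-z_k)\star\Psi(z)$ and then the general evaluation lemma. The paper's computation is shorter and tied to the specific Pick matrix of the problem; yours costs more non-commutative bookkeeping but proves the interpolation property of $\Theta$ in \eqref{theta} for any invertible self-adjoint $P$ satisfying the Stein equation with this diagonal $A$, and it delivers the stronger conclusion of left $\star$-divisibility by $z-z_k$ rather than mere vanishing at $z_k$ -- a refinement in the spirit of the factorization arguments of Section \ref{sec-blaschke}.
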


\begin{proof}
First, note that
\[
\left(\begin{array}{c c}
1 & -s_k
\end{array}\right)\star\Theta(z)\bigg\rvert_{z=z_k}=
\left(\begin{array}{c c}
1 & -s_k
\end{array}\right) -
(1-z)
\left(\begin{array}{c c}
1 & -s_k
\end{array}\right)C \star(I_N-zA)^{-\star}P^{-1}(I_N-A)^{-*}C^*J\bigg\rvert_{z=z_k}.
\]
Also,
\[
\left(\begin{array}{c c}
1 & -s_k
\end{array}\right)C =
\left(
\begin{array}{c c c}
1-s_ks_1^\dag & \cdots & 1-s_ks_N^\dag
\end{array}
\right)
\]
and, then,
\[
\left(\begin{array}{c c}
1 & -s_k
\end{array}\right)C \star (I_N-zA)^{-\star}\bigg\rvert_{z=z_k} = P^{[k]},
\]
where $P^{[k]}$ denotes the $k$-th row of $P$. Hence, if $I_N^{[k]}$ denotes the $k$-th row of $I_N$,
\begin{eqnarray*}
\left(\begin{array}{c c}
1 & -s_k
\end{array}\right)\star\Theta(z)\bigg\rvert_{z=z_k} & = &
\left(\begin{array}{c c}
1 & -s_k
\end{array}\right) -
(1-z_k) I_N^{[k]}(I_N-A)^{-*}C^*J \\
             & = & \left(\begin{array}{c c}
                            1 & -s_k
                         \end{array}\right) -
                   (1-z_k)(1-z_k)^{-1} I_N^{[k]}C^*J \\
             & = & 0,
\end{eqnarray*}
since $I_N^{[k]}C^*J = \left(\begin{array}{c c} 1 & -s_k \end{array}\right)$.
\end{proof}

With this result in mind, let $\sigma$ be any Schur-Grassmann function and consider the term
\[
\left(\begin{array}{c c}
1 & -S(z)
\end{array}\right)\star\Theta(z)\star
\left(
\begin{array}{c}
\sigma(z) \\
1
\end{array}
\right).
\]
Note that such a product is null at $z=z_k$ and, moreover,
\begin{equation}
s_k=\left(a\star\sigma(z)+b(z)\right)\star\left(c\star\sigma(z)+d(z)\right)^{-\star}\bigg\rvert_{z=z_k},
\label{sk-expression}
\end{equation}
with
\[
\Theta(z)=\left(\begin{array}{c c}
a(z) & b(z) \\
c(z) & d(z)
\end{array}\right)
\]
and provided that $c\star\sigma(z_k)+d(z_k)$ is invertible. In fact, that is the case since we only need the body of it to be invertible -- and the body correspond to the classical complex case, which is invertible.\smallskip

Equation \eqref{sk-expression} allows us to write
\begin{equation}
S\equiv T_\Theta(\sigma).
\label{S-operator}
\end{equation}
We need, then, to prove the following result.

\begin{proposition}
If $\sigma$ in \eqref{S-operator} is a Schur-Grassmann function, then $S$ is a Schur-Grassmann function.
\end{proposition}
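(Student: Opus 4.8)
The plan is to reduce the assertion to the classical complex case by means of Proposition \ref{propo_equiv}: it suffices to show that the body $S_B$ is a classical Schur function whenever $\sigma_B$ is. The key point to exploit is that the body map $z\mapsto z_B$ is a continuous unital algebra homomorphism from $\overline{\Lambda}^{(1)}$ onto $\mathbb{C}$ which, by the very definition of the conjugation $\dag$, satisfies $(z^\dag)_B=\overline{z_B}$, and which by Theorem \ref{inv} sends invertible supernumbers to nonzero complex numbers with $(z^{-1})_B=(z_B)^{-1}$. Applied coefficient-wise to a power series $F=\sum_n z^n f_n$, it therefore sends the Cauchy product to the ordinary product of functions, the $\star$-inverse to the ordinary inverse, the supermatrix adjoint $*$ to the usual adjoint, and the evaluation $F(z_0)$ to $F_B\bigl((z_0)_B\bigr)$. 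Hence, taking the body of \eqref{S-operator} and of the defining formula \eqref{sk-expression} gives
\[
S_B=T_{\Theta_B}(\sigma_B)=\left(a_B\sigma_B+b_B\right)\left(c_B\sigma_B+d_B\right)^{-1},
\]
where $\Theta_B$ is the $2\times2$ block matrix obtained from \eqref{theta} on replacing $A,C,P$ by $A_B,C_B,P_B$ (the signature matrix $J$ being the constant $\mathrm{diag}(1,-1)$ of \eqref{values}, so that $J_B=J$).

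The next step is to recognize $\Theta_B$ as the classical resolvent matrix of the Nevanlinna--Pick problem attached to the data $(A_B,C_B,J)$. Taking the body of the identity $P-A^*PA=C^*JC$ yields $P_B-A_B^*P_BA_B=C_B^*JC_B$, and since in Problem \ref{np-interp} the Pick matrix $P$ is superpositive while, in \eqref{theta}, $P$ is also taken invertible, its body $P_B$ is a positive-definite Hermitian matrix. Consequently $\Theta_B$ is $J$-inner, and the classical theory of $J$-inner functions and their linear-fractional transformations (see e.g. \cite{bgr,Dym_CBMS}) guarantees that $\sigma_B\mapsto T_{\Theta_B}(\sigma_B)$ maps the Schur class into itself; the invertibility of $c_B\sigma_B+d_B$ on $\overline{\mathbb{D}}$ needed to make sense of the formula is exactly the fact already noted after \eqref{sk-expression}.

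It then remains to assemble the pieces. Since $\sigma$ is a Schur--Grassmann function, Proposition \ref{propo_equiv} gives that $\sigma_B$ is a classical rational Schur function; by the previous paragraph $S_B=T_{\Theta_B}(\sigma_B)$ is again a rational Schur function. On the other hand $S$ is itself rational, being obtained from $\Theta$ and $\sigma$ -- hence from constants, $z$, $\star$-multiplications and $\star$-inversions of rational functions -- by finitely many such operations (Lemmas \ref{lemsumprod} and \ref{leminv}). Invoking Proposition \ref{propo_equiv} in the reverse direction then yields that $S$ is a Schur--Grassmann function.

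The only substantive work in this scheme is the bookkeeping of the first paragraph -- checking that the body map genuinely commutes with every operation entering $T_\Theta$, in particular with $\star$-inversion and with evaluation at a point of the open unit superdisk -- together with the identification of $\Theta_B$ as $J$-inner, which is precisely where the superpositivity of the Pick matrix is used; I expect this to be routine. If one instead wanted a proof that does not pass through the body, the natural route would be to use Corollary \ref{schur-spd}: one would express the kernel $\sum_{n\ge 0}z^n\bigl(I-S(z)S(w)^*\bigr)(w^\dag)^n$ in terms of $\Theta$ and of the superpositive kernel $\sum_{n\ge 0}z^n\bigl(I-\sigma(z)\sigma(w)^*\bigr)(w^\dag)^n$ of $\sigma$, by means of the reproducing-kernel identity of Proposition \ref{rk-formula} together with a Schur-complement argument; in that approach the manipulation of the $J$-form of $\Theta$ in the noncommutative setting would be the main obstacle.
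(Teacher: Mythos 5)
Your proof is correct, but it follows a genuinely different route from the paper's. The paper argues entirely inside the Grassmann setting: starting from the $J$-contractivity $\Theta^*J\Theta\preceq J$ (which comes from Proposition \ref{rk-formula} and the superpositivity of $P$), it multiplies the $J$-form on the left by $\begin{pmatrix}\sigma^\dag & 1\end{pmatrix}$ and on the right by $\begin{pmatrix}\sigma\\ 1\end{pmatrix}$, and after a short chain of inequalities in the partial order $\preceq$ arrives at $S^\dag S\preceq 1$, i.e.\ the contractivity characterization of Corollary \ref{schur-ineq-mtx}. You instead push everything through the body homomorphism: you check that the body map intertwines $\star$-products, $\star$-inverses, adjoints and evaluations, identify $\Theta_B$ as the classical $J$-inner resolvent matrix attached to $(A_B,C_B,P_B>0,J)$, invoke the classical fact that $T_{\Theta_B}$ preserves the Schur class, observe that $S$ is rational, and then apply Proposition \ref{propo_equiv} in the reverse direction. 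Both arguments are sound within the paper's framework; indeed your strategy is exactly the one the paper itself uses for the \emph{converse} statement (the next proposition, showing every solution arises as $T_\Theta(\sigma)$), so it is consistent with the authors' toolkit. What each buys: your reduction is shorter and offloads the analytic content onto the well-documented classical theory of $J$-inner functions, but it leans on Proposition \ref{propo_equiv} (hence on the rationality of $S$, which you rightly take care to justify via Lemmas \ref{lemsumprod} and \ref{leminv}); the paper's computation is intrinsic, never leaves $\overline{\Lambda}^{(1)}$, and shows that the familiar $J$-form manipulation survives the noncommutative setting, directly yielding the quantitative inequality $S^\dag S\preceq 1$ rather than only the qualitative membership statement. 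Your sketched kernel-based alternative via Corollary \ref{schur-spd} is yet a third route, closest in spirit to the paper's but not the one it takes.
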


\begin{proof}
Since
\[
\Theta^* J\Theta \preceq J,
\]
we have
\begin{eqnarray*}
\left(\begin{array}{c c} \sigma^\dag & 1 \end{array}\right)\star\Theta^* J\Theta\star\left(\begin{array}{c} \sigma \\ 1 \end{array}\right) \preceq \sigma^\dag\sigma-1 & \Rightarrow & \left(\begin{array}{c c} (a\star\sigma+b)^\dag  & -(c\star\sigma+d)^\dag \end{array} \right)\left(\begin{array}{c} a\star\sigma+b \\ c\star\sigma+d \end{array} \right) \preceq 0 \\
     & \Rightarrow & (a\star\sigma+b)^\dag(a\star\sigma+b)  \preceq (c\star\sigma+d)^\dag(c\star\sigma+d) \\
     & \Rightarrow & \left[(a\star\sigma+b)\star(c\star\sigma+d)^{-\star}\right]^\dag\left[(a\star\sigma+b)\star(c\star\sigma+d)^{-\star}\right] \preceq 1 \\
     & \Rightarrow & S^\dag S\preceq 1.
\end{eqnarray*}
\end{proof}

With the above result, we showed, then, how to obtain Schur-Grassmann functions that are solutions of the Problem \ref{np-interp}. With the next proposition, we want to show that those are all the solutions. 

\begin{proposition}
If a Schur-Grassmann function $S$ is a solution of the Problem \ref{np-interp}, then there exists a Schur-Grassmann function $\sigma$ such that $S$ is given by \eqref{S-operator}.
\end{proposition}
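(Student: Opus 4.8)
The plan is to run the classical ``converse'' argument for the linear-fractional parametrization, reduced to the body part where possible, and then lift. First I would take any Schur--Grassmann solution $S$ of Problem \ref{np-interp} and form the row function $\left(\begin{array}{c c} 1 & -S(z)\end{array}\right)\star\Theta(z)$. The interpolation conditions $S(z_k)=s_k$ together with Theorem \ref{interpo} show that this row function vanishes at each $z_k$ after multiplying on the right by the appropriate column; the point, as in the classical case, is that the vanishing at the nodes, combined with the $J$-contractivity $\Theta^*J\Theta\preceq J$ and the positivity of the Pick matrix $P$, forces $\left(\begin{array}{c c} 1 & -S\end{array}\right)\star\Theta$ to factor through $\mathcal H(\Theta)^\perp$, i.e. to land in $\left(\begin{array}{c c} 1 & -\sigma\end{array}\right)$ for some function $\sigma$. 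Concretely I would set
\[
\left(\begin{array}{c c} u(z) & v(z)\end{array}\right) \equiv \left(\begin{array}{c c} 1 & -S(z)\end{array}\right)\star\Theta(z), \qquad \Theta=\left(\begin{array}{c c} a & b\\ c & d\end{array}\right),
\]
so that $u = a\star S\star(\text{something})$... more precisely $u = (1)\star a - S\star c$ and $v = (1)\star b - S\star d$, and then define $\sigma \equiv -u^{-\star}\star v$ (equivalently $\sigma = v^{-\star}\star(-u)$ after checking sides), provided $u$ is $\star$-invertible.

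The invertibility of $u$ and the fact that $\left(\begin{array}{c c} u & v\end{array}\right)$ ``is'' $\left(\begin{array}{c c} 1 & -\sigma\end{array}\right)$ up to a left $\star$-factor should be handled by passing to the body: $u_B, v_B$ are the corresponding objects in the classical Nevanlinna--Pick problem, where the positivity of $P_B$ (which holds since $P\succeq 0$ implies $P_B\geq 0$ by Proposition \ref{body-reduction-positive}) guarantees that $c_B\star\sigma_B+d_B$ is invertible and that the classical solution $\sigma_B$ is a Schur function; hence $u_B$ is invertible, and by Theorem \ref{inv} (applied entrywise, or via the matrix corollary after Proposition \ref{decomposition}) $u$ itself is $\star$-invertible as a rational function, since $\star$-invertibility of a rational function is governed by invertibility of its body values (Lemma \ref{leminv} gives the realization of the $\star$-inverse once $D=u(0)$ is invertible). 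Then $S = T_\Theta(\sigma)$, i.e. \eqref{S-operator}, follows by the same algebra used to derive \eqref{sk-expression}: from $\left(\begin{array}{c c} 1 & -S\end{array}\right)\star\Theta = \left(\begin{array}{c c} 1 & -\sigma\end{array}\right)\star(\text{left factor})$ one reads off $1 = a\star\sigma+b$ type relations up to the common factor and solves for $S$.

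Finally I would verify that the $\sigma$ so produced is itself a Schur--Grassmann function. This is the reverse of the inequality chain in the previous proposition: from $S^\dag S\preceq 1$ and $\Theta^*J\Theta\preceq J$ one does not directly get $\sigma^\dag\sigma\preceq 1$, so instead I would use that $\Theta$ is actually $J$-unitary on the appropriate set (Definition \ref{rationalpotapov}) — here $\Theta(z)J\Theta(z^{-\dag})^* = J$ since $P-A^*PA=C^*JC$ — which makes $T_\Theta$ invertible with inverse a linear-fractional map of the same type, so that $\sigma = T_{\Theta^{-1}}(S)$ and the same estimate that was used to show ``$\sigma$ Schur $\Rightarrow S$ Schur'' applies verbatim to the inverse map. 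Alternatively, and more cheaply, I would just invoke Proposition \ref{propo_equiv}: $\sigma_B$ is a classical Schur function by the classical theory, and $\sigma$ is rational by construction (it is built from $S$, $\Theta$ by finitely many $\star$-operations, and $\Theta$ is rational), so $\sigma$ is a Schur--Grassmann function. \textbf{The main obstacle} I anticipate is the bookkeeping of left versus right $\star$-factors and the precise justification that $\left(\begin{array}{c c} u & v\end{array}\right)$ reduces to a left multiple of $\left(\begin{array}{c c} 1 & -\sigma\end{array}\right)$ — in the commutative classical case this is transparent, but here one must make sure the common factor can be pulled out on the correct side and is $\star$-invertible; this is where the reduction to the body (every relevant scalar/matrix is invertible iff its body is, by Theorem \ref{inv} and its matrix corollary) does the real work.
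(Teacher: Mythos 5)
Your proposal is correct and follows essentially the same route as the paper: the paper simply sets $\sigma = T_\Theta^{-1}(S)$ (your explicit construction $\sigma = -u^{-\star}\star v$ with $\left(\begin{array}{cc} u & v\end{array}\right)=\left(\begin{array}{cc} 1 & -S\end{array}\right)\star\Theta$ is exactly what this inverse map unpacks to), and then, as in your ``cheaper'' alternative, observes that the body reduces to the classical Nevanlinna--Pick problem so that $\sigma_B$ is a classical Schur function, concluding via Proposition \ref{propo_equiv}. Your extra care about $\star$-invertibility of the denominator via body reduction is a sound filling-in of a step the paper leaves implicit.
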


\begin{proof}
First, we let $\sigma$ be given by
\[
\sigma=T^{-1}_{\Theta}(S).
\]
With that, $S$ satisfies \eqref{S-operator}. Because the restriction to the body corresponds to a classical Nevanlinna-Pick interpolation problem in the
complex setting (with Pick matrix $P_B>0$), we know that $\sigma_B$ is also a Schur function. 
Therefore, by Proposition \ref{propo_equiv}, we conclude that $\sigma$ is a Schur-Grassmann function.
\end{proof}

\section{The Schur algorithm}
\setcounter{equation}{0}
\label{schuralgo}

Before discussing the counterpart of the Schur algorithm in the Grassmann setting we 
first go back to the recursion \eqref{recurschur}. Let for $\rho\in\mathbb D$,
\[
\Theta_{\rho}(\lambda)=\frac{1}{\sqrt{1-|\rho|^2}}\begin{pmatrix}1&\rho\\
\overline{\rho}&1\end{pmatrix}\begin{pmatrix}\lambda&0\\ 0&1\end{pmatrix}.
\]
Then, expressing $s_n$ in terms of $s_{n+1}$ we see that
\eqref{recurschur} can be rewritten as
\begin{equation}
\label{recur2!!!}
s_n(\lambda)=T_{\Theta_{\rho_n}(\lambda)}(s_{n+1}(\lambda)),
\end{equation}
with $\rho_n=s_n(0)$. The matrix-function $\Theta_{\rho_n}(\lambda)$ is $J$-inner in the open unit disk, and this property remains when multiplying $\Theta_{\rho_n}(\lambda)$ by a $J$-unitary constant, say $X_n$, on the right. Then, \eqref{recur2!!!} can be rewritten as
\begin{equation}
s_n(\lambda)=T_{\Theta_{\rho_n}(\lambda)X_n}(T_{X_n^{-1}}(s_{n+1}(\lambda))).
\end{equation}
Since $X_n$ is $J$-unitary, the function $T_{X_n^{-1}}(s_{n+1}(\lambda))$ is still a Schur function. Such a freedom was used in \cite[\S 3]{ad3} to develop the Schur algorithm in the matrix-valued case -- see, in particular, formula (4.13) in that paper.\smallskip

A particular choice of $X_n$ leads to 
\begin{equation}
\label{Mn!!!}
\Theta_{\rho_n}(\lambda)X_n=I_2-(1-\lambda)\frac{\begin{pmatrix}1 \\ \overline{\rho_n}
\end{pmatrix}\begin{pmatrix}1 &-\rho_n\end{pmatrix}}{1-|\rho_n|^2}.
\end{equation}
Denoting this last $J$-inner function by $M_n$, we rewrite the Schur algorithm -- following \cite{ad3} -- in the modified form
\begin{equation}
\begin{split}
\sigma_0(\lambda)&=s(\lambda)\\
\sigma_{n+1}(\lambda)&=T_{M_n(\lambda)^{-1}}(\sigma_n(\lambda))
\end{split}
\label{recu3!!!}
\end{equation}
This recursion with the counterpart of \eqref{Mn!!!} will be our definition of the
Schur algorithm in the Grassmann setting. More precisely, using \eqref{theta}
with (see \eqref{values})
\[
A=0,\quad C=\begin{pmatrix}1 \\ \rho_n^\dag \end{pmatrix}\quad {\rm and}\quad J=\left(
\begin{array}{c c}
1 & 0  \\
0 & -1
\end{array}
\right),
\]
the Stein equation becomes
\[
P=1-\rho_n\rho_n^\dag.
\]
Moreover, $M_n$ is
\begin{equation}
M_n(z)=I-(1-z)\begin{pmatrix}1 \\ \rho_n^\dag\end{pmatrix}(1-\rho_n\rho_n^\dag)^{-1}
\begin{pmatrix}1 &-\rho_n\end{pmatrix}=\begin{pmatrix}a(z)&b(z)\\ c(z)&d(z)\end{pmatrix},
\end{equation}
with
\begin{equation}
\begin{split}
a(z)&=1-(1-z)(1-\rho_n\rho_n^\dag)^{-1},\\
b(z)&=(1-z)(1-\rho_n\rho_n^\dag)^{-1}\rho_n,\\
c(z)&=-(1-z)\rho_n^\dag(1-\rho_n\rho_n^\dag)^{-1},\\
d(z)&=1+(1-z)\rho_n^\dag(1-\rho_n\rho_n^\dag)^{-1}\rho_n.
\end{split}
\end{equation}

Note that $M_n(0)$ is not invertible (see \eqref{interpo}),
\[
\begin{split}
\det M_n(0)&=\det\left(I-\begin{pmatrix}1 \\ \rho_n^\dag\end{pmatrix}(1-\rho_n\rho_n^\dag)^{-1}
\begin{pmatrix}1 &-\rho_n\end{pmatrix}\right)\\
&=1-(1-\rho_n\rho_n^\dag)^{-1}
\begin{pmatrix}1 &-\rho_n\end{pmatrix}\begin{pmatrix}1 \\ 
\rho_n^\dag\end{pmatrix}\\
&=0,
\end{split}
\]
even though, in general, it is not trivial to define the determinant of a matrix with entries in the Grassmann algebra.\smallskip

The following theorem follows directly from Proposition \ref{propo_equiv}.

\begin{theorem} (the Schur algorithm)
Let $S$ be a Schur-Grassmann function. Then, the recursion
\[
\begin{split}
\sigma_0(z)&=S(z)\\
\sigma_{n+1}(z)&=T_{M_n(z)^{-1}}(\sigma_n(z)),\quad with \,\, \rho_n=\sigma_n(0)
\end{split}
\]
defines a family of Schur-Grassmann functions as long as $P_n\succ 0$.
\end{theorem}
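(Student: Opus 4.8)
The plan is to reduce everything to the body part via Proposition \ref{propo_equiv}, which says that a rational function is a Schur--Grassmann function precisely when its body is a (classical) Schur function. First I would observe that the recursion is well posed: at step $n$, the coefficient $\rho_n=\sigma_n(0)$ is a supernumber with $(\rho_n)_B=(\sigma_n)_B(0)$, and the hypothesis $P_n\succ 0$ means $P_n=1-\rho_n\rho_n^\dag$ is a positive supernumber, hence (by Proposition \ref{body-part}, extended to $\overline{\Lambda}^{(1)}$) its body $1-|(\rho_n)_B|^2>0$ is strictly positive and, by Theorem \ref{inv}, $P_n$ is invertible; this is what is needed to form $M_n(z)$ and to make sense of $T_{M_n(z)^{-1}}$. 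The crucial point here is that applying the body map is an algebra homomorphism compatible with the $\star$-product (since the $\star$-product restricts to the ordinary Cauchy product on the complex coefficients), so the body of $M_n(z)$ is exactly the classical matrix function $\Theta_{\rho_{n,B}}(\lambda)X_n$ appearing in \eqref{Mn!!!}, and the body of the linear fractional transformation $T_{M_n(z)^{-1}}(\sigma_n(z))$ is $T_{M_{n,B}(\lambda)^{-1}}((\sigma_n)_B(\lambda))$.

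Next I would run the induction. The base case $\sigma_0=S$ is a Schur--Grassmann function by hypothesis, so $(\sigma_0)_B=S_B$ is a classical Schur function. Assume $\sigma_n$ is a Schur--Grassmann function, so $(\sigma_n)_B$ is a classical Schur function with $1-|(\rho_n)_B|^2>0$; then $(\sigma_{n+1})_B = T_{M_{n,B}(\lambda)^{-1}}((\sigma_n)_B(\lambda))$ is precisely the $(n+1)$-st iterate of the classical (modified) Schur algorithm \eqref{recu3!!!} applied to $S_B$, hence is again a classical Schur function by the classical theory recalled around \eqref{recurschur}--\eqref{recu3!!!}. It remains to note that $\sigma_{n+1}$ is rational: this follows because $M_n(z)$ is a rational function (it is the $\Theta$ of \eqref{theta} with $A=0$, $C=(1,\rho_n^\dag)^t$, $J=\mathrm{diag}(1,-1)$, hence a polynomial in $z$, and rationality is preserved under $\star$-multiplication and $\star$-inversion and linear fractional transformations built from these, by the equivalences of Definitions \ref{def1}--\ref{def6}). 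Having shown $\sigma_{n+1}$ is rational with Schur body, Proposition \ref{propo_equiv} gives that $\sigma_{n+1}$ is a Schur--Grassmann function, closing the induction.

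Finally, one should check that each linear fractional transformation is actually defined, i.e.\ that the ``denominator'' $c_n\star\sigma_n(z)+d_n(z)$ is invertible wherever we evaluate it: by Theorem \ref{inv} it suffices that its body not vanish, and the body is the classical denominator $c_{n,B}(\lambda)(\sigma_n)_B(\lambda)+d_{n,B}(\lambda)$, which is nonvanishing (for $|\lambda|<1$ and in particular at $\lambda=0$) precisely because the classical Schur algorithm is well defined as long as $|\rho_{n,B}|<1$ --- which is exactly the condition $P_n\succ 0$ guarantees. The main obstacle, and the only genuinely non-cosmetic point, is verifying that the body map intertwines the $\star$-product, the $\star$-inverse, and the induced linear fractional transformation with their classical counterparts; once this compatibility is in place the whole statement is an immediate transfer of the classical Schur algorithm through Proposition \ref{propo_equiv}, exactly as asserted in the sentence preceding the theorem.
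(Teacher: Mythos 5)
Your proposal is correct and follows essentially the same route as the paper, which disposes of this theorem by noting it "follows directly from Proposition \ref{propo_equiv}": you reduce to the body, observe that the recursion restricts to the classical (modified) Schur algorithm there, and use $P_n\succ0$ together with Theorem \ref{inv} for well-posedness. Your write-up simply makes explicit the compatibility of the body map with the $\star$-operations and the preservation of rationality, details the paper leaves implicit.
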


\section{Blaschke factors and Brune sections}
\setcounter{equation}{0}
\label{sec-blaschke}

Now, let $p=q=1$ and $J=1$. For such a case we relabel $b_a(z)\equiv\Theta(z)$, i.e.,
\[
b_a(z) = 1-(1-z)c\star(1-z a)^{-\star}p^{-1}(1-a)^{-\dag}c^\dag.
\]
where $a,c,p\in\overline{\Lambda}^{(1)}$, with $p$ being a real supernumber. Moreover, we constrain our study to the cases where the previous proposition apply -- explicitly, we assume $p-a^\dag pa=c^\dag c$. The function $b_a$ is called the Blaschke factor.

\begin{proposition}
The Blaschke factor $b_a$ vanishes at $\omega=c^{-\dag}a^\dag c^\dag$. Moreover, it can be factorized as
\[
b_a(z)=(z-\omega) \left[1+(\omega-1)c^{-\dag}pc^{-1}\omega^\dag\right]\star(1-z\omega^\dag)^{-\star}cp^{-1}(1-a)^{-\dag}c^\dag.
\]
\end{proposition}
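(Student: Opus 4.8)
The plan is to prove the factorization first and then deduce the vanishing from it. Both assertions concern power series in $z$ (in fact rational functions), so, exactly as in the proofs of Lemma~\ref{leminv} and Proposition~\ref{rk-formula}, it is enough to establish the factorization for a complex variable $z=\lambda\in\mathbb{C}$, where $\star$ is the ordinary product of $\overline{\Lambda}^{(1)}$, and then to extend it to a generic $\overline{\Lambda}^{(1)}$-valued $z$ by reinstating the star product and replacing $\lambda$ by $z$.

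First I would pull the resolvent to the outside of $b_a(\lambda)=1-(1-\lambda)c(1-\lambda a)^{-1}p^{-1}(1-a)^{-\dag}c^\dag$. Since $\omega=c^{-\dag}a^\dag c^\dag$ gives $\omega^\dag=cac^{-1}$, one has $(1-\lambda\omega^\dag)c=c(1-\lambda a)$, hence $c(1-\lambda a)^{-1}=(1-\lambda\omega^\dag)^{-1}c$, and therefore $b_a(\lambda)=1-(1-\lambda)(1-\lambda\omega^\dag)^{-1}L$ with $L:=cp^{-1}(1-a)^{-\dag}c^\dag$. Writing $1=(1-\lambda\omega^\dag)^{-1}(1-\lambda\omega^\dag)$ and collecting terms yields $b_a(\lambda)=(1-\lambda\omega^\dag)^{-1}\big[(1-L)+\lambda(L-\omega^\dag)\big]$, so the remaining ``numerator'' is affine in $\lambda$.

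The core of the argument is then to factor this affine expression and to recast the result in the stated shape. Using the Stein relation $p-a^\dag pa=c^\dag c$ (together with $p^\dag=p$ and conjugation identities such as $(1-\omega)c^{-\dag}=c^{-\dag}(1-a^\dag)$, which follows from $c^\dag c^{-\dag}=1$) I would verify the two algebraic facts $1-(1-\omega)L=\omega\omega^\dag$ and $\omega\omega^\dag=\omega^\dag\omega$. The first gives $(1-L)+\lambda(L-\omega^\dag)=(\lambda-\omega)(L-\omega^\dag)$, hence $b_a(\lambda)=(1-\lambda\omega^\dag)^{-1}(\lambda-\omega)(L-\omega^\dag)$; the second (normality of $\omega$) permits sliding $(1-\lambda\omega^\dag)^{-1}$ past $(\lambda-\omega)$; and one further use of the Stein relation rewrites $(1-\lambda\omega^\dag)^{-1}(L-\omega^\dag)$ as $\big[1+(\omega-1)c^{-\dag}pc^{-1}\omega^\dag\big](1-\lambda\omega^\dag)^{-1}L$, which is exactly the claimed formula; extending to general $z$ finishes the factorization. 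This middle step is where I expect the real difficulty: because $\overline{\Lambda}^{(1)}$ is non-commutative one must track the order of factors carefully when threading the Stein relation through, although all of these identities collapse to one-line checks on the even subalgebra, where everything commutes.

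The vanishing at $\omega$ is then immediate from the factored form. Write it as $b_a(z)=(z-\omega)\star H(z)$ with $H(z)=\big[1+(\omega-1)c^{-\dag}pc^{-1}\omega^\dag\big]\star(1-z\omega^\dag)^{-\star}\star\big(cp^{-1}(1-a)^{-\dag}c^\dag\big)=\sum_{n\ge0}z^nh_n$. By the definition of the Cauchy product, $(z-\omega)\star H(z)=\sum_{n\ge0}z^n(h_{n-1}-\omega h_n)$ with $h_{-1}=0$; evaluating at $z=\omega$ and using that $\omega$ commutes with all of its own powers, the two resulting sums $\sum_{n}\omega^nh_{n-1}$ and $\sum_{n}\omega^{n+1}h_n$ coincide, so $b_a(\omega)=0$.
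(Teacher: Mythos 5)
Your overall architecture is reasonable: the reduction to a complex variable $\lambda$ (as in Lemma \ref{leminv}), the intertwining $c(1-\lambda a)^{-1}=(1-\lambda\omega^\dag)^{-1}c$, and the telescoping argument that gets $b_a(\omega)=0$ from a factorization $b_a(z)=(z-\omega)\star H(z)$ are all fine, and proving the factorization first and the vanishing second is a legitimate reversal of the paper's order (the paper proves $b_a(\omega)=0$ directly from $\sum_{n\ge0}(a^\dag)^nc^\dag c a^n=p$, then writes $b_a(z)=b_a(z)-b_a(\omega)$ and factors out $(1-z\omega^\dag)^{-\star}$ on the right, using only $\sum_{n\ge0}\omega^n(\omega^\dag)^n=c^{-\dag}pc^{-1}$ and $c^{-\dag}pc^{-1}=1+\omega c^{-\dag}pc^{-1}\omega^\dag$, identities in which $\omega$ only has to commute with its own powers). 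The gap is exactly at the point you flag as ``the real difficulty'' and then dismiss by saying the identities reduce to one-line checks on the even subalgebra: $a$, $c$, $p$ are not assumed even, so a commutative check proves nothing, and two of the statements your chain relies on are in fact false in general.

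Concretely, take $c=1$, $a=i_1-\mathrm{i}\,i_2$ (where $\mathrm{i}$ denotes the complex imaginary unit) and $p=1-2\mathrm{i}\,i_1i_2$; then $p^\dag=p$, $p\succ0$ and $p-a^\dag pa=1=c^\dag c$, so this is an admissible datum, with $\omega=a^\dag$ and $\omega^\dag=a$. Here $\omega\omega^\dag=-2\mathrm{i}\,i_1i_2\neq 2\mathrm{i}\,i_1i_2=\omega^\dag\omega$, so your ``normality of $\omega$'' fails, and with it the sliding of $(1-\lambda\omega^\dag)^{-1}$ past $(\lambda-\omega)$, which requires $\omega(\omega^\dag)^n=(\omega^\dag)^n\omega$ for all $n$. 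Likewise, your final rewriting $(1-\lambda\omega^\dag)^{-1}(L-\omega^\dag)=\bigl[1+(\omega-1)c^{-\dag}pc^{-1}\omega^\dag\bigr](1-\lambda\omega^\dag)^{-1}L$ is, after comparing powers of $\lambda$ and using $(1-\omega)c^{-\dag}pc^{-1}=L^{-1}$, equivalent to $L(\omega^\dag)^m=(\omega^\dag)^mL$ for all $m\ge1$; in the same example $L=1+i_1+\mathrm{i}\,i_2+2\mathrm{i}\,i_1i_2$ and $L\omega^\dag=i_1-\mathrm{i}\,i_2-2\mathrm{i}\,i_1i_2\neq i_1-\mathrm{i}\,i_2+2\mathrm{i}\,i_1i_2=\omega^\dag L$. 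So even granting your first identity $1-(1-\omega)L=\omega\omega^\dag$ (which you also leave unverified), the passage from your ``resolvent-first'' expression $(1-\lambda\omega^\dag)^{-1}(\lambda-\omega)(L-\omega^\dag)$ to the stated formula is carried by moves that are invalid in $\overline{\Lambda}^{(1)}$; that the two expressions nevertheless agree is precisely what must be proved and cannot be obtained by these commutations. The paper's argument avoids the issue by manipulating the series for $b_a(z)-b_a(\omega)$ and pulling $(1-z\omega^\dag)^{-\star}$ out on the right, so that only the two summed Stein identities above are needed.
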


\begin{proof}
First, note that
\begin{eqnarray*}
b_a(z) & = & 1-(1-z) c\star(1-z a)^{-\star}p^{-1}(1-a)^{-\dag}c^\dag \\
          & = & 1-(1-z)\sum_{n=0}^\infty z^n c a^np^{-1}(1-a)^{-\dag}c^\dag.
\end{eqnarray*}
Since $\omega=c^{-\dag}a^\dag c^\dag$, note that $\omega^n=c^{-\dag}(a^\dag)^nc^\dag$. Then,
\[
b_a(\omega) = 1-c^{-\dag}(1-a)^\dag \sum_{n=0}^\infty (a^\dag)^nc^\dag c a^n p^{-1}(1-a)^{-\dag} c^\dag.
\]
Because $p-a^\dag pa=c^\dag c$ implies that $\sum_{n=0}^\infty (a^\dag)^nc^\dag c a^n=p$, we conclude that $b_a$ in fact vanishes at $\omega$.

To show the factorization for $b_a(z)$, we start by observing that $a=c^{-1}\omega^\dag c$ and writing
\begin{eqnarray*}
b_a(z) & = & 1-(1-z)\sum_{n=0}^\infty z^n c a^np^{-1}(1-a)^{-\dag}c^\dag \\
       & = & 1-(1-z)\sum_{n=0}^\infty z^n (\omega^\dag)^ncp^{-1}(1-a)^{-\dag}c^\dag.
\end{eqnarray*}
We also need to note the equalities
\begin{eqnarray*}
c^{-\dag}pc^{-1} & = & c^{-\dag}(c^\dag c+a^\dag pa)c^{-1} \\
                 & = & 1+ c^{-\dag}a^\dag pac^{-1} \\
                 & = & 1+\omega c^{-\dag} p c^{-1}\omega^\dag
\end{eqnarray*}
and
\[
\sum_{n=0}^\infty \omega^n (\omega^\dag)^n = c^{-\dag}pc^{-1}.
\]
Therefore,
\begin{eqnarray*}
b_a(z) & = & b_a(z) - b_a(\omega) \\
       & = & \left[(z-1)\sum_{n=0}^\infty z^n (\omega^\dag)^n-(\omega-1)\sum_{n=0}^\infty \omega^n (\omega^\dag)^n\right]cp^{-1}(1-a)^{-\dag}c^\dag \\
       & = & \left[(z-1)(1-z\omega^\dag)^{-\star}-(\omega-1)\sum_{n=0}^\infty \omega^n (\omega^\dag)^n\right]cp^{-1}(1-a)^{-\dag}c^\dag \\
       & = & \left[(z-1)-(\omega-1)c^{-\dag}pc^{-1}\star(1-z\omega^\dag)\right]\star(1-z\omega^\dag)^{-\star}cp^{-1}(1-a)^{-\dag}c^\dag \\
       & = & (z-\omega) \left[1+(\omega-1)c^{-\dag}pc^{-1}\omega^\dag\right]\star(1-z\omega^\dag)^{-\star}cp^{-1}(1-a)^{-\dag}c^\dag.
\end{eqnarray*}
\end{proof}

As consequence of Proposition \ref{propotheta} we can now state:

\begin{proposition}
Let $a\in\overline{\Lambda}^{(1)}$ with $aa^\dag=1$. Moreover, let $f\in\mathcal{W}_G$. Thus, $f(a^\dag)=0$ if and only if there exists $g\in\mathcal{W}_G$ such that $f(z)=b_a(c^{-\dag}zc^\dag)\star g(z)$. Moreover, $\Vert f\Vert_{\mathcal{W}_G}=\Vert g\Vert_{\mathcal{W}_G}$.
\end{proposition}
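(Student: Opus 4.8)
The plan is to read the statement off the orthogonal decomposition of Proposition~\ref{propotheta}, specialized to the scalar case $p=q=1$, $J=1$, where $\Theta=b_a$. First I would record the one‑dimensional instance of that decomposition,
\[
{\mathcal{W}_G}_+ \;=\; b_a\star{\mathcal{W}_G}_+ \;\oplus\; \mathcal{H}(b_a),
\]
the two summands being orthogonal for the form $[\cdot,\cdot]$ of \eqref{def-form}. Then I would make $\mathcal{H}(b_a)$ explicit. Evaluating \eqref{h-theta-kernel} at $w=\omega:=c^{-\dag}a^\dag c^\dag$ and using $b_a(\omega)=0$ (the preceding proposition), the kernel $K_{\mathcal{H}(b_a)}(z,\omega)$ collapses to $\sum_{n\ge 0}z^n(\omega^\dag)^n=K(z,\omega)$, the reproducing kernel \eqref{kernel} of ${\mathcal{W}_G}_+$ at $\omega$; since $q=1$ and $b_a$ carries a single zero, $\mathcal{H}(b_a)$ reduces to the one‑generated submodule $K(\cdot,\omega)\,\overline{\Lambda}^{(1)}$ (mirroring the classical codimension‑one fact), where here I would use $ca^n=(\omega^\dag)^nc$ so that $c\star(1-za)^{-\star}=K(\cdot,\omega)\,c$.

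With this identification, the reproducing property $h(\omega)=[h,K(\cdot,\omega)]$ gives, for $h\in{\mathcal{W}_G}_+$, that $h\perp\mathcal{H}(b_a)$ iff $h(\omega)=0$. To pass from orthogonality to membership in $b_a\star{\mathcal{W}_G}_+$ I would argue transversality of the two summands at $\omega$: by the splitting of $K$ in the proof of Proposition~\ref{propotheta}, point evaluation at $\omega$ on $b_a\star{\mathcal{W}_G}_+$ is computed against the kernel $\sum_{n\ge 0}z^n b_a(z)b_a(\omega)^*(\omega^\dag)^n$, which is $0$ since $b_a(\omega)=0$, so that summand lies in the zero set of evaluation at $\omega$; whereas the only element of $\mathcal{H}(b_a)$ vanishing at $\omega$ is $0$, because $h_0=K(\cdot,\omega)\eta$ has $h_0(\omega)=K(\omega,\omega)\eta=(c^{-\dag}pc^{-1})\eta$ and $c^{-\dag}pc^{-1}$ is invertible. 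Hence in the unique splitting $h=b_a\star k+h_0$ with $h_0\in\mathcal{H}(b_a)$, the hypothesis $h(\omega)=0$ forces $h_0=0$, so $h(\omega)=0$ iff $h=b_a\star k$ for some $k\in{\mathcal{W}_G}_+$. For the norm statement I would use that $b_a$ is $J$‑unitary with $J=1$ (Definition~\ref{rationalpotapov}), i.e. $b_a(z)b_a(z)^\dag=1$ whenever $zz^\dag=1$; feeding this into $[\cdot,\cdot]$ as in Corollary~\ref{contrac} promotes the contractivity there to the isometry $[b_a\star k,b_a\star k]=[k,k]$, hence $\|b_a\star k\|_{\mathcal{W}_G}=\|k\|_{\mathcal{W}_G}$.

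Finally I would transport this from a zero at $\omega$ to a zero at $a^\dag$ via the inner automorphism $\varphi(z)=c^{-\dag}zc^\dag$ of $\overline{\Lambda}^{(1)}$; note $(c^\dag)^{-1}=c^{-\dag}$, so $\varphi$ is conjugation by $c^\dag$ and $\varphi(a^\dag)=\omega$ precisely because $\omega=c^{-\dag}a^\dag c^\dag$. Given $f\in\mathcal{W}_G$ with $f(a^\dag)=0$, put $h=f\circ\varphi^{-1}$; then $h(\omega)=f(a^\dag)=0$, so $h=b_a\star k$, and composing back with $\varphi$ gives $f=(b_a\circ\varphi)\star(k\circ\varphi)=b_a(c^{-\dag}zc^\dag)\star g$ with $g=k\circ\varphi$, while $\|f\|_{\mathcal{W}_G}=\|h\|_{\mathcal{W}_G}=\|k\|_{\mathcal{W}_G}=\|g\|_{\mathcal{W}_G}$. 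The converse runs backward along the same chain, using that $b_a(c^{-\dag}zc^\dag)$ vanishes at $a^\dag$ (visible from the factorization $(z-\omega)[\cdots]$ of the preceding proposition, which becomes $c^{-\dag}(z-a^\dag)c^\dag[\cdots]$ after the substitution).

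The step I expect to be the main obstacle is the last one: making precise in what sense $b_a(c^{-\dag}zc^\dag)$ is an element of $\mathcal{W}_G$, and checking that precomposition by $\varphi$ is a module isomorphism of $\mathcal{W}_G$ that intertwines the Cauchy product $\star$ and preserves $\|\cdot\|_{\mathcal{W}_G}$ --- routine but not entirely free, since $\varphi$ mixes the variable with constants and left multiplication by a constant does not commute with $\star$. Everything else is a direct reading‑off from Proposition~\ref{propotheta} and the explicit kernel \eqref{h-theta-kernel}, together with the exact (rather than merely contractive) inner property of $b_a$ needed for the norm identity.
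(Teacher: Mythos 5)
Your proposal follows essentially the same route as the paper: the paper offers no argument beyond the phrase ``as consequence of Proposition \ref{propotheta},'' and your plan is precisely a fleshed-out version of that --- specializing the orthogonal decomposition to $\Theta=b_a$, identifying $\mathcal{H}(b_a)$ with the submodule generated by $K(\cdot,\omega)$ via $ca^n=(\omega^\dag)^nc$, using the reproducing property and $b_a(\omega)=0$ to get the factorization at $\omega$, and then conjugating by $z\mapsto c^{-\dag}zc^\dag$ to move the zero to $a^\dag$. The delicate points you flag (in what sense $b_a(c^{-\dag}zc^\dag)$ lies in $\mathcal{W}_G$ and intertwines $\star$, and the exact isometry of multiplication by $b_a$ needed for the norm identity) are exactly the details the paper leaves implicit, so your sketch is consistent with, and more detailed than, the paper's own treatment.
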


Now, let us return to the original definition of $\Theta$ and study the analogous of the Blaschke-Potapov factors of the third kind, also known as the Brune section (named after Brune; see \cite{brune-31}). In such a case, we consider
\begin{equation}
c^*Jc=0,
\end{equation}
which leads to
\begin{equation}
p=a^\dag pa,
\label{p=apa}
\end{equation}
where $p$ is a real supernumber and
\begin{equation}
a^\dag a=1.
\label{aa=1}
\end{equation}
Note that it forces this last condition also implies that $a$ commutes with $a^\dag$. In last instance, it implies that the odd terms of $a$ are real supernumbers. Furthermore, conditions \eqref{p=apa} and \eqref{aa=1} together reveal that $a$ and $p$ commute as well. Therefore, $p$ should not only be a real supernumber, it should also be even. \smallskip

Our goal here is to rewrite $\Theta$ in its most known decomposition form, which in the complex case is
\[
\Theta_{BP}(\lambda)=I+\frac{cc^*J}{2p}\frac{\lambda+a}{\lambda-a}.
\]
We note that
\[
\Theta(\lambda)\Theta_{BP}(\lambda)^{-1}=I-\frac{cc^*J}{2p}\frac{1+\overline{a}}{1-\overline{a}},
\]
which suggests a way to obtain an analogous to $\Theta_{BP}$ from $\Theta$ in the case of $\overline{\Lambda}^{(1)}$-valued variables. We define
\[
M=I-\frac{1}{2}c(1+a)^\dag p^{-1} (1-a)^{-\dag}c^*J.
\]
Then, a simple computation shows that
\[
\Theta_{BP}(z)=\Theta(z)M^{-1}=I+\frac{1}{2}(z+a^\dag)\star(z-a^\dag)^{-\star}c(1-a)^\dag p^{-1}(1-a)^{-\dag}c^*J.
\]

\section{Final remarks}
\setcounter{equation}{0}
\label{sec-final}
In this work we have introduced the notion of positivity in the Grassmann algebra setting. We also defined the (left) Cauchy product for power series -- as well as its adjoint, the right Cauchy product. Such ideas were used to study the counterpart of classical problems, such as the one step extension problem for Toeplitz matrices and the Wiener algebra, and to begin the development of Schur analysis.\smallskip

There are multiple research directions that emerge from our results. For instance, they could be used as a starting point for the construction of functional analysis tools -- and, in particular, the relevant theory of reproducing kernel Hilbert modules. The approach for stochastic processes in the Grassmann setting we introduced in \cite{2018arXiv180611058A} could be also enriched with more studies in this direction.\smallskip

Moreover, the classical problem of the one-step extension of Toeplitz matrices, as mentioned in Section \ref{sec-matrix-ext}, is also associated to stochastic processes and, in particular, to the Yule-Walker equations. Another topic of investigation is what is the connection between those topics in the Grassmann setting. In case such a connection exist, an interesting question is concerning the stochastic processes that arrise from the one-step extension. Are they part of the same type of processes we introduced in \cite{2018arXiv180611058A}?\smallskip

Finally, due to the importance of Grassmann numbers in quantum field theory and the physical motivation for the classical complex counterpart of what was studied here, one could investigate what are the applications in physics of this work and of future works motivated by it.

\section*{Acknowledgments}
Daniel Alpay thanks the Foster G. and Mary McGaw Professorship in Mathematical Sciences, which supported this research. Ismael L. Paiva acknowledges financial support from the Science without Borders program (CNPq/Brazil). Daniele C. Struppa thanks the Donald Bren Distinguished Chair in Mathematics, which supported this research.

\bibliographystyle{plain}
\bibliography{all}
\end{document}